\begin{document}
\newtheorem{problem}{Problem}
\newtheorem{theorem}{Theorem}
\newtheorem{lemma}[theorem]{Lemma}
\newtheorem{claim}[theorem]{Claim}
\newtheorem{cor}[theorem]{Corollary}
\newtheorem{prop}[theorem]{Proposition}
\newtheorem{definition}{Definition}
\newtheorem{question}[theorem]{Question}
\newtheorem{conjecture}{Conjecture}

%%%%%%%%%%%%%%%%%%%%%%%%%
% Alphabet calligraphic %
%%%%%%%%%%%%%%%%%%%%%%%%%
\def\cA{{\mathcal A}}
\def\cB{{\mathcal B}}
\def\cC{{\mathcal C}}
\def\cD{{\mathcal D}}
\def\cE{{\mathcal E}}
\def\cF{{\mathcal F}}
\def\cG{{\mathcal G}}
\def\cH{{\mathcal H}}
\def\cI{{\mathcal I}}
\def\cJ{{\mathcal J}}
\def\cK{{\mathcal K}}
\def\cL{{\mathcal L}}
\def\cM{{\mathcal M}}
\def\cN{{\mathcal N}}
\def\cO{{\mathcal O}}
\def\cP{{\mathcal P}}
\def\cQ{{\mathcal Q}}
\def\cR{{\mathcal R}}
\def\cS{{\mathcal S}}
\def\cT{{\mathcal T}}
\def\cU{{\mathcal U}}
\def\cV{{\mathcal V}}
\def\cW{{\mathcal W}}
\def\cX{{\mathcal X}}
\def\cY{{\mathcal Y}}
\def\cZ{{\mathcal Z}}

%%%%%%%%%%%%%%%%%%%%%%%
% Alphabet blackboard %
%%%%%%%%%%%%%%%%%%%%%%%
\def\A{{\mathbb A}}
\def\B{{\mathbb B}}
\def\C{{\mathbb C}}
\def\D{{\mathbb D}}
\def\E{{\mathbb E}}
\def\F{{\mathbb F}}
\def\G{{\mathbb G}}
\def\I{{\mathbb I}}
\def\J{{\mathbb J}}
\def\K{{\mathbb K}}
\def\L{{\mathbb L}}
\def\M{{\mathbb M}}
\def\N{{\mathbb N}}
\def\O{{\mathbb O}}
\def\P{{\mathbb P}}
\def\Q{{\mathbb Q}}
\def\R{{\mathbb R}}
\def\S{{\mathbb S}}
\def\T{{\mathbb T}}
\def\U{{\mathbb U}}
\def\V{{\mathbb V}}
\def\W{{\mathbb W}}
\def\X{{\mathbb X}}
\def\Y{{\mathbb Y}}
\def\Z{{\mathbb Z}}

\def\ep{{\mathbf{e}}_p}
\def\em{{\mathbf{e}}_m}
\def\eq{{\mathbf{e}}_q}

\def\scr{\scriptstyle}
\def\\{\cr}
\def\({\left(}
\def\){\right)}
\def\[{\left[}
\def\]{\right]}
\def\<{\langle}
\def\>{\rangle}
\def\fl#1{\left\lfloor#1\right\rfloor}
\def\rf#1{\left\lceil#1\right\rceil}
\def\le{\leqslant}
\def\ge{\geqslant}
\def\eps{\varepsilon}
\def\mand{\qquad\mbox{and}\qquad}

\def\sssum{\mathop{\sum\ \sum\ \sum}}
\def\ssum{\mathop{\sum\, \sum}}
\def\ssumw{\mathop{\sum\qquad \sum}}

\def\vec#1{\mathbf{#1}}
\def\inv#1{\overline{#1}}
\def\num#1{\mathrm{num}(#1)}
\def\dist{\mathrm{dist}}

\def\fA{{\mathfrak A}}
\def\fB{{\mathfrak B}}
\def\fC{{\mathfrak C}}
\def\fU{{\mathfrak U}}
\def\fV{{\mathfrak V}}

\newcommand{\bflambda}{{\boldsymbol{\lambda}}}
\newcommand{\bfxi}{{\boldsymbol{\xi}}}
\newcommand{\bfrho}{{\boldsymbol{\rho}}}
\newcommand{\bfnu}{{\boldsymbol{\nu}}}

\def\GL{\mathrm{GL}}
\def\SL{\mathrm{SL}}

\def\Hba{\overline{\cH}_{a,m}}
\def\Hta{\widetilde{\cH}_{a,m}}
\def\Hb1{\overline{\cH}_{m}}
\def\Ht1{\widetilde{\cH}_{m}}

\def\flp#1{{\left\langle#1\right\rangle}_p}
\def\flm#1{{\left\langle#1\right\rangle}_m}
\def\dmod#1#2{\left\|#1\right\|_{#2}}
\def\dmodq#1{\left\|#1\right\|_q}

\def\Zm{\Z/m\Z}

\def\Err{{\mathbf{E}}}

\newcommand{\comm}[1]{\marginpar{%
\vskip-\baselineskip %raise the marginpar a bit
\raggedright\footnotesize
\itshape\hrule\smallskip#1\par\smallskip\hrule}}

\def\xxx{\vskip5pt\hrule\vskip5pt}

%%%%%%%%%%%%%%%%%%
%% PAPER BEGINS %%
%%%%%%%%%%%%%%%%%%

\title[Large values of Dirichlet polynomials]{Large values of Dirichlet polynomials and zero density estimates for the Riemann zeta function}
 \author[B. Kerr] {Bryce Kerr}
\address{School of Science, The University of New South Wales Canberra, Australia}
\email{b.kerr@adfa.edu.au}
\thanks{The author was supported by Australian Research Council Discovery Project DP160100932.}
%\title{\bf  Polynomial Vectors  of Small Height in 
%a Projective Spaces over Finite Fields}
\date{\today}
\pagenumbering{arabic}

%\keywords{??}
%\subjclass[2010]{??}
\begin{abstract}
In this paper we obtain some new estimates for the number of large values of Dirichlet polynomials. Our results imply new zero density estimates for the Riemann zeta function which give a small improvement on results of Bourgain and Jutila.
\end{abstract}

\maketitle
\section{Introduction}
In this paper we consider estimating the number of times a Dirichlet polynomial can take large values. Let $a_n$ be a sequence of complex numbers satisfying $|a_n|\le 1$ and $\cA\subset [0,T]$ a set of real numbers which is $1$-spaced and satisfies
\begin{align*}
\left|\sum_{N\le n \le 2N}a_n n^{it}\right|\ge V, \quad t\in \cA.
\end{align*}
The problem of obtaining an upper bound for the cardinality $|\cA|$ is motivated by estimating the number of zeros of the Riemann zeta function in a rectangle to the right of the critical line. The main conjecture for this problem is known as Montgomery's conjecture and states
\begin{align}
\label{eq:mont}
\sum_{t\in \cA}\left|\sum_{N\le n \le 2N}a_n n^{it}\right|^2\ll (NT)^{o(1)}\left(N|\cA|+N^2\right).
\end{align}
Assuming $N\ge T^{o(1)}$ this implies  
\begin{align}
\label{eq:montlarge}
|\cA|\ll \frac{N^{2+o(1)}}{V^2}, \quad \text{provided} \quad V\ge N^{1/2+o(1)}.
\end{align}
The best known general result towards~\eqref{eq:mont} is obtained via Fourier completion 
\begin{align}
\label{eq:completion}
\sum_{t\in \cA}\left|\sum_{N\le n \le 2N}a_n n^{it}\right|^2\ll (NT)^{o(1)}\left(NT+N^2\right).
\end{align}
The advantage of~\eqref{eq:mont} over~\eqref{eq:completion} is the lack of dependence on the parameter $T$. Results of this type are very rare and we mention an important estimate over convolutions due to Heath-Brown~\cite{HB}.
\begin{theorem}
\label{thm:heathbrown}
Let $\cA\subseteq [0,T]$ be a well spaced set, $N$ an integer and $a_n$ a sequence of complex numbers satisfying $|a_n|\le 1$. We have  
$$\sum_{t_1,t_2\in \cA}\left|\sum_{1\le n \le N}a_n n^{-1/2+i(t_1-t_2)}\right|^2\ll (|\cA|^2+N|\cA|+T^{1/2}|\cA|^{5/4})(NT)^{o(1)}.$$
\end{theorem}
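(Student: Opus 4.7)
The plan is to start from the identity obtained by expanding the outer square and interchanging summation. Write $S(u)=\sum_{n\le N} a_n n^{-1/2+iu}$ and $K(\theta)=\sum_{t\in \cA} e^{it\theta}$; then
\begin{equation*}
\sum_{t_1, t_2\in \cA} |S(t_1-t_2)|^2 \;=\; \sum_{n_1, n_2\le N}\frac{a_{n_1}\overline{a_{n_2}}}{\sqrt{n_1 n_2}}\,|K(\log(n_1/n_2))|^2.
\end{equation*}
Two easy diagonal contributions already account for the first two terms in the claimed bound: the $n_1=n_2$ diagonal on the right gives $|\cA|^2\sum_n |a_n|^2/n\ll |\cA|^2(NT)^{o(1)}$, while isolating the $t_1=t_2$ contribution on the left gives $|\cA|\cdot|S(0)|^2\ll N|\cA|$ using the trivial bound $|S(0)|\le 2\sqrt{N}$.

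For the remaining off-diagonal contribution I would decompose the pairs $(n_1, n_2)$ dyadically, according to $n_2\in[M,2M]$ and $n_1-n_2\in[H,2H]$ (with sign). The frequencies $\theta_{n_1,n_2}=\log(n_1/n_2)$ appearing in such a block lie in an interval of length $\asymp H/M$ and are $\gg M^{-2}$-separated, because $|n_1 n_2'-n_1' n_2|\ge 1$ for distinct pairs. Applying the large sieve inequality for the $1$-spaced set $\cA\subset[0,T]$ yields
\begin{equation*}
\sum_{(n_1, n_2)\in\text{block}}|K(\theta_{n_1, n_2})|^2 \ll (T+M^2)|\cA|(NT)^{o(1)}.
\end{equation*}

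The main obstacle is that summing these block bounds in the straightforward way recovers only a bound of order $T|\cA|(NT)^{o(1)}$, whereas the target is $T^{1/2}|\cA|^{5/4}$. To find this saving, I would apply Cauchy--Schwarz in a $t$-variable so as to bring in the fourth-moment estimate
\begin{equation*}
\sum_{t_1, t_2\in \cA}|S(t_1-t_2)|^4 \ll |\cA|(T+N^2)(NT)^{o(1)},
\end{equation*}
which itself follows from re-applying the bilinear identity at length $N^2$ with convolution coefficients $c_n=\sum_{n_1 n_2=n} a_{n_1} a_{n_2}$ controlled via the divisor bound $d(n)\ll n^{o(1)}$. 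A one-step Cauchy--Schwarz at this level gives $|\cA|^{3/2}(T+N^2)^{1/2}(NT)^{o(1)}$, which is worse than the claimed bound by a factor of $|\cA|^{1/4}$ in the range $T\ge N^2$. Finding this last $|\cA|^{1/4}$ saving is the step I expect to be hardest to execute cleanly: one must apply a second, more delicate Cauchy--Schwarz in which the multiplicity structure of the frequencies $\log(n_1/n_2)$ is balanced against the additive energy of $\cA$, extracting the $5/4$-exponent precisely because of the tight interplay between these two structures.
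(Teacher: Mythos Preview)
The paper does not prove Theorem~\ref{thm:heathbrown}; it is quoted from Heath-Brown's paper~\cite{HB}. However, the paper does prove a weighted generalisation (Theorem~\ref{thm:largeadditive}) by essentially Heath-Brown's method, so one can compare your proposal to that argument.

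Your proposal has a genuine gap: you yourself say that the final $|\cA|^{1/4}$ saving is ``the step I expect to be hardest to execute cleanly'' and then offer only a vague description of a second Cauchy--Schwarz balancing multiplicities of $\log(n_1/n_2)$ against the additive energy of $\cA$. No such step is known to work, and in fact this is not how the $5/4$ exponent arises. Moreover, your fourth-moment input $\sum_{t_1,t_2}|S(t_1-t_2)|^4\ll |\cA|(T+N^2)(NT)^{o(1)}$ is not proved either: ``re-applying the bilinear identity at length $N^2$'' simply reproduces the original problem with $N$ replaced by $N^2$, and the block large-sieve bound you derived then gives $(T+N^4)|\cA|$, not $(T+N^2)|\cA|$.

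Heath-Brown's actual argument, as reproduced in this paper in Lemmas~\ref{lem:larger}, \ref{lem:square}, \ref{lem:main1} and Corollary~\ref{cor:reflection}, is structurally different. Writing $S(N)$ for the left-hand side (with $\Delta=T$ and $\gamma=\mathbf 1_{\cA}$), the three ingredients are: (i) monotonicity, $S(N)\ll S(M)$ for $M\ge 2N$, via a multiplicative-character averaging trick; (ii) Cauchy--Schwarz, $S(N)^2\ll I\cdot S(N^2)$ with $I=|\cA|^2$; and (iii) a van~der~Corput $B$-process reflection, $S(N)\ll S(T/N)+I+N\|\gamma\|_2^2$. Combining (ii) and (iii) gives $S(N)^2\ll I\cdot S(T^2/N^2)$ up to acceptable terms; when $N\ge T^{2/3}$ one has $T^2/N^2\le N$, so (i) closes this to $S(N)\ll I+N|\cA|$. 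For general $N$ one first uses (i) to pass to a length of size $T^{2/3}$, and the balancing of (i) against the trivial bound $S(M)\ll M|\cA|$ via the optimisation in~\eqref{eq:Udef154} is what produces the intermediate term $T^{1/2}|\cA|^{5/4}$. The reflection step (iii) is entirely absent from your plan and is the missing idea.
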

This bound was first used by Heath-Brown to estimate the additive energy $E(\cA)$ of large values 
$$E(\cA)=|\{ t_1,\dots,t_4 \in\cA \ : \ |t_1+t_2-t_3-t_4|\le 1\},$$
and applied to zero density estimates~\cite{HB1} by combining with ideas of Jutila~\cite{Jut} and primes in short intervals~\cite{HB2}. Bourgain~\cite{Bou} refined the use of Theorem~\ref{thm:heathbrown} in applications to zero density estimates and obtained the widest known parameters for which the density conjecture holds.  The use of energy estimates in classical arguments for bounding $|\cA|$ interact badly with Huxley's subdivision and Bourgain was able to refine this aspect of Heath-Brown's argument by exploiting the fact that if $|\cA|$ is large then the points $t\in \cA$ also correlate with large values of the Riemann zeta function. 
\newline

 Bourgain's argument can be considered a Balog-Szemeredi-Gower's type theorem applied to $\cA$. One way to see how ideas from additive combinatorics are useful is as follows. Using some Fourier analysis, we may assume $\cA\subseteq \Z$. Suppose we are in an extreme case where 
\begin{align}
\label{eq:AAsumset}
|\cA-\cA|\ll |\cA|.
\end{align}
Then 
$$E(\cA)\sim |\cA|^3,$$
and hence most points $t\in \cA$ have $\gg |\cA|$ representations 
$$t=t'-t'', \quad t'\in \cA, \quad t''\in \cA-\cA,$$
which implies
\begin{align*}
\sum_{t\in \cA}\left|\sum_{N\le n \le 2N}a_n n^{it}\right|^2\ll \frac{1}{|\cA|}\sum_{t'\in \cA, t''\in \cA-\cA}\left|\sum_{N\le n \le 2N}a_n n^{i(t'-t'')} \right|^2.
\end{align*}
By~\eqref{eq:AAsumset} we can cover $\cA-\cA$ by $O(1)$ translates of $\cA$, so that 
\begin{align*}
\sum_{t\in \cA}\left|\sum_{N\le n \le 2N}a_n n^{it}\right|^2\ll \frac{1}{|\cA|}\sum_{t'\in \cA, t''\in \cA}\left|\sum_{N\le n \le 2N}a'_n n^{i(t'-t'')} \right|^2,
\end{align*}
for some sequence $a'_n$ satisfying $|a'_n|=|a_n|$. Applying Theorem~\ref{thm:heathbrown} gives 
$$\sum_{t\in \cA}\left|\sum_{N\le n \le 2N}a_n n^{it}\right|^2\ll N^{o(1)}(N|\cA|+N^2+T^{1/2}|\cA|^{5/4}),$$
which establishes~\eqref{eq:mont} for certain ranges of parameters. One would then give a complementary approach to deal with the case $E(\cA)$ is small and a final bound for $|\cA|$ may be obtained by decomposing $\cA$ into pieces with either small energy or small sumset. The most straightforward way to deal with small energy is to apply duality to create more variables of summation in $\cA$. Directly applying dualtiy to~\eqref{eq:mont} for $\ell_2$ norms sets a limit of the argument at
\begin{align}
\label{eq:mont11}
\sum_{t\in \cA}\left|\sum_{N\le n \le 2N}a_n n^{it}\right|^2\ll (NT)^{o(1)}\left(N^{3/2}|\cA|+N^2\right),
\end{align}
and would give the estimate~\eqref{eq:montlarge} in the range $V\ge N^{3/4+o(1)}$. 
\newline

Establishing~\eqref{eq:mont11} would be significant and there have been a number of improvements to~\eqref{eq:completion} for $V\ge N^{3/4+o(1)}$. We refer the reader to~\cite{Ivic} for an overview of techniques and results prior to Bourgain's work~\cite{Bou0,Bou,Bou1}. An important problem which has seen no progress is to improve on~\eqref{eq:completion} for $V\le N^{3/4+o(1)}$. One may consider applying duality with fractional exponents although this approach lacks a geometric way to interpret the resulting mean values as in the $\ell_2$ case. Attempting to deal with this issue by decomposing into level sets, one is lead to sums of the form 
\begin{align}
\label{eq:hbsparse}
\sum_{t_1,t_2\in \cA}\left|\sum_{d\in D} d^{i(t_1-t_2)}\right|^2,
\end{align}
where $D\subseteq [N,2N]\cap \Z$ may be sparse. This would require a variant of Theorem~\ref{thm:heathbrown} which is sensitive to the size of $D$. It is not clear what to expect for the sums~\eqref{eq:hbsparse} since one may construct variations which give the trivial bound. If $q$ is prime and $H\subseteq \F_q$ is some multiplicative subgroup, there exists a set $\cA$ of multiplicative characters mod $q$ such that 
$$|H||\cA|\sim q,$$
and 
\begin{align}
\label{eq:hbsparse1}
\sum_{\chi_1,\chi_2\in \cA}\left|\sum_{h\in H} \chi_1\overline \chi_2(h)\right|^2= |H|^2|\cA|^2.
\end{align} 
If $D$ is not small it does not seem possible to construct similar examples directly for the sums~\eqref{eq:hbsparse} since for any set of integers $D\subseteq [N,2N]$ satisfying $|D|\ge N^{\varepsilon}$ we have 
$$|DD|\ge |D|^{2-o(1)}.$$
\indent In this paper we obtain some new large values estimates in the range $V\ge N^{3/4+o(1)}$. Our arguments build on techniques of Bourgain, Heath-Brown, Huxley, Jutila and Ivi\'{c} and are also motivated by the sum-product problem. Current approaches to the sum-product problem establish relations between various energies using geometric incidences. To estimate the number of large values of exponential sums, one may proceed in analogy to sum-product estimates given a suitable replacement for geometric incidences. In the case of Dirichlet polynomials, this role is played by Heath-Brown's convolution estimate.
\newline

We will use a more general version of Theorem~\ref{thm:heathbrown} and in particular we consider the sums 
\begin{align}
\label{eq:hbgeneral}
\sum_{\substack{t_1,t_2\in \cA \\ |t_1-t_2|\le \delta T}}\gamma(t_1)\gamma(t_2)\left|\sum_{1\le n \le N}a_n n^{-1/2+i(t_1-t_2)}\right|^2.
\end{align}
The parameter $\delta$ corresponds to Huxley's subdivision and in applications, estimates for the sums~\eqref{eq:hbgeneral} give a better dependence on $\delta$ than directly using Theorem~\ref{thm:heathbrown}. We note that one may use the sums~\eqref{eq:hbgeneral} in Heath-Brown's original argument~\cite{HB1} to recover Bourgain's result~\cite{Bou}.  We record the bound obtained by this method.
\begin{theorem}
\label{thm:bou}
Suppose $N,T,V$ are positive real numbers and $a_n$ a sequence of complex numbers satisfying $|a_n|\le 1$. Let  $\cA\subset[0,T]$ be a $1$-spaced set satisfying 
\begin{align*}
\left|\sum_{N\le n\le 2N}a_n n^{it} \right|\ge V, \quad t\in \cA.
\end{align*}
Suppose $N,T,\cA,V$ satisfy
\begin{align}
\label{eq:main1cond} 
N\ge T^{2/3}, \quad  |\cA|\le N, \quad V\ge N^{3/4+o(1)}.
\end{align}
For any $0<\delta\le 1$ we have 
\begin{align*}
|\cA|\ll \frac{1}{\delta}\frac{N^{2+o(1)}}{V^2}+\frac{\delta T^2 N^{4+o(1)}}{V^8}+\frac{T^{1/3}N^{16/3+o(1)}}{\delta^{1/3}V^{20/3}}+\frac{T^{2/3}N^{9+o(1)}}{V^{12}}.
\end{align*}
\end{theorem}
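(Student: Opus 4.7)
The plan is to execute Heath-Brown's original zero-density argument from \cite{HB1}, replacing each invocation of Theorem~\ref{thm:heathbrown} by the Huxley-subdivided convolution estimate \eqref{eq:hbgeneral}; the parameter $\delta$ in the conclusion is exactly the Huxley subdivision parameter. The whole point of working with \eqref{eq:hbgeneral} is that, while subdividing first and applying Heath-Brown afterwards already produces a $\delta$-dependent bound, the inequality \eqref{eq:hbgeneral}---being Heath-Brown's inequality already aware of the diagonal restriction $|t_1-t_2|\le\delta T$---gives a sharper dependence on $\delta$ in the diagonal and $|\cA|^{5/4}$ pieces than a black-box application of Theorem~\ref{thm:heathbrown} after subdivision.

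First I would raise the large-values inequality to the $k$th power. Writing $S(t):=\sum_{N\le n\le 2N}a_nn^{it}$, the $k$-fold product $S(t)^k=\sum_m c_m m^{it}$ is a Dirichlet polynomial of length $M:=N^k$, with $c_m$ supported in $[N^k,(2N)^k]$ and satisfying $|c_m|\le d_k(m)\ll m^{o(1)}$, so that $\|c\|_2^2\ll N^k(NT)^{o(1)}$. Dualising against the unimodular weight $\phi(t)=\overline{S(t)^k}/|S(t)|^k$ and applying Cauchy--Schwarz in $m$ yields
\begin{equation*}
V^{2k}|\cA|^2\ll (NT)^{o(1)}\,N^k\sum_{N^k\le m\le 2^kN^k}\left|\sum_{t\in\cA}\phi(t)m^{it}\right|^2.
\end{equation*}
Opening the square recasts the right-hand side as a sum over pairs $(t_1,t_2)\in\cA\times\cA$ of a Dirichlet polynomial of length $M$ at argument $t_1-t_2$.

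Next I would apply Huxley's subdivision: partition $[0,T]$ into $\lceil 1/\delta\rceil$ windows of length $\delta T$, pigeonhole to a window $I$ with $|\cA\cap I|\gg\delta|\cA|$, and thereby restrict the pair sum to $|t_1-t_2|\le\delta T$; this places the mean value squarely in the regime of \eqref{eq:hbgeneral}. Its three-term output---a diagonal contribution, a length contribution of shape $M|\cA\cap I|$, and a Heath-Brown off-diagonal contribution of shape $(\delta T)^{1/2}|\cA\cap I|^{5/4}$---is inserted back to give
\begin{equation*}
V^{2k}|\cA\cap I|^2\ll (NT)^{o(1)}\,M\bigl(X_{\mathrm{diag}}(\delta)+M|\cA\cap I|+(\delta T)^{1/2}|\cA\cap I|^{5/4}\bigr).
\end{equation*}
Substituting $|\cA\cap I|\gg\delta|\cA|$ and resolving the $|\cA|^{5/4}$ self-improvement by the usual dichotomy produces three candidate upper bounds on $|\cA|$ for each $k$. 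Optimising over a small set of integer values of $k$ to balance these candidates against the Huxley loss $\delta^{-1}$ then yields the four summands in the conclusion, with the exponents $V^2,V^8,V^{20/3},V^{12}$ tracking the particular values of $k$ and whether the term comes from the length piece or from the self-improved $|\cA|^{5/4}$ piece of \eqref{eq:hbgeneral}.

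The principal obstacle I anticipate is the careful bookkeeping of $\delta$-dependence through \eqref{eq:hbgeneral}. Unlike Theorem~\ref{thm:heathbrown}, the Huxley-subdivided bound distributes $\delta$ asymmetrically across its three summands, and it is these asymmetries that generate the nontrivial $\delta$-factors of the conclusion---including, notably, the $\delta$ in the numerator of the second term and the $\delta^{-1/3}$ in the third. Verifying that the optimisation over $k$ yields precisely these four terms, with no extraneous factors beyond $(NT)^{o(1)}$ and without the conditions \eqref{eq:main1cond} being violated along the way, is the main place where care is required; none of the individual steps is conceptually novel, but their combination must be tracked closely so that the Bourgain-type bound emerges intact.
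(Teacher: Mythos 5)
Your plan has two structural problems, and together they mean the four terms in the statement will not emerge from the optimisation over $k$. First, after Hal\'asz duality the right-hand side opens to
\begin{equation*}
\sum_{m\sim N^k}\left|\sum_{t\in\cA\cap I}\phi(t)m^{it}\right|^2=\sum_{t_1,t_2\in\cA\cap I}\phi(t_1)\overline{\phi(t_2)}\sum_{m\sim N^k}m^{i(t_1-t_2)},
\end{equation*}
which is a \emph{first} power of a Dirichlet polynomial at $t_1-t_2$, whereas Theorem~\ref{thm:heathbrown} and the convolution estimate \eqref{eq:hbgeneral} (i.e.\ Theorem~\ref{thm:largeadditive}) bound the \emph{square} $\sum_{t_1,t_2}\left|\sum_n a_nn^{-1/2+i(t_1-t_2)}\right|^2$. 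You must insert one more Cauchy--Schwarz over the pairs before the convolution estimate applies, which changes your displayed inequality to something of the shape $V^{4k}|\cA\cap I|^2\ll N^{3k+o(1)}\bigl(|\cA\cap I|^2+N^k|\cA\cap I|+(\delta T)^{1/2}|\cA\cap I|^{5/4}\bigr)$. After the dichotomy and $|\cA|\ll\delta^{-1}|\cA\cap I|$, the candidate bounds are $\delta^{-1}N^{4k}/V^{4k}$ and $\delta^{-1/3}T^{2/3}N^{4k}/V^{16k/3}$, and no integer $k$ matches any of the terms in the conclusion --- for example the third term has $T^{1/3}$ and $V^{-20/3}$, not $T^{2/3}$ and $V^{-16k/3}$, and the first term $\delta^{-1}N^2/V^2$ would need $k=1/2$.

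Second, and more fundamentally, ``Heath-Brown's original argument from \cite{HB1}'' is not the $k$th-power Hal\'asz--Montgomery scheme with $\gamma=\mathbf 1_{\cA\cap I}$. The argument that the paper is gesturing at consists of (i) Jutila's reduction (Lemma~\ref{lem:jut}, packaged here as Lemma~\ref{lem:mainvlarge1}), which replaces the original polynomial of length $N$ by a much \emph{shorter} one of length $\delta T/N$; (ii) a dyadic level-set decomposition of the representation function $r(\ell)=\#\{(t_1,t_2)\in\cA^2:0\le t_1-t_2-\ell<1\}$; and (iii) an additive-energy estimate in the spirit of Lemma~\ref{lem:e2energy} applied to the dominant level set, which is where the flexibility of the weights $\gamma$ in \eqref{eq:hbgeneral} is actually used. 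Taking $\gamma=\mathbf 1_{\cA\cap I}$ forces $I(\delta T,\gamma)=|\cA\cap I|^2$ and discards precisely the information the general weights are designed to retain. None of (i)--(iii) appear in your sketch, yet the $\delta T^2N^4/V^8$ and $T^{2/3}N^9/V^{12}$ terms in the statement arise from those steps (the former from a reflection of the short polynomial to length $N^2/(\delta T)$ as in Corollary~\ref{cor:reflection}, the latter from the interaction of the level-set exponent with the Jutila-shortened length; compare the proofs of Theorems~\ref{thm:main1} and~\ref{thm:main4}). So the missing ideas are not a matter of bookkeeping but of which intermediate objects are being estimated.
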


The $\gamma$ in~\eqref{eq:hbgeneral} may be arbitrary positive weights and allow estimation of more general energies such as
\begin{align*}
T_k(\cA)=\{ t_1,\dots,t_{2k}\in \cA \ : \ |t_1+\dots-t_{2k}|\le 1\},
\end{align*}
which are relevant to other problems involving the distribution of primes although will not be considered in this paper. 
\newline 

We also introduce another technical refinement into the Bourgain-Heath-Brown approach which is based on an idea of Ivi\'c. The Hal\'{a}sz-Montgomery method to estimate the number of large values reduces to bounding sums of the form
\begin{align*}
S=\sum_{t_1,t_2\in \cA}\left|\sum_{N\le n \le 2N}n^{-1/2+i(t_1-t_2)}\right|^2.
\end{align*}
Using Mellin inversion one may complete these sums on to $\zeta(1/2+it)$ to get
$$S\ll 
\sum_{t_1,t_2\in \cA}\left|\zeta\left(\frac{1}{2}+i(t_1-t_2)\right)\right|^2.$$
An estimate for $S$ may be obtained by combining H\"{o}lder's inequality with moment estimates for $\zeta$ and energy estimates for $\cA$. By the approximate functional equation, if $N\le T^{1/2-\delta}$ is small then the completion step is wasteful and we may obtain sharper results by retaining some information about $N$. For example, after rescaling we get
\begin{align*}
S\ll N^{2\delta }\sum_{t_1,t_2\in \cA}\left|\sum_{N\le n \le 2N}n^{-1/2-\delta+i(t_1-t_2)}\right|^2,
\end{align*}
which may be completed on to $\zeta(1/2+\delta+it)$ where higher moment estimates are available.
\section{Large values of Dirichlet polynomials}
\label{sec:largevalue}
In what follows we refer to 1-spaced sets as well spaced.
\begin{theorem}
\label{thm:main1}
Suppose $N,T,V$ are positive real numbers and $a_n$ a sequence of complex numbers satisfying $|a_n|\le 1$. Let  $\cA\subset[0,T]$ a well spaced set satisfying 
\begin{align*}
\left|\sum_{N\le n\le 2N}a_n n^{it} \right|\ge V, \quad t\in \cA.
\end{align*}
Suppose $N,T,\cA,V$ satisfy
\begin{align}
\label{eq:main1cond} 
N\ge T^{2/3}, \quad  |\cA|\le N, \quad V\ge N^{3/4+o(1)}.
\end{align}
Let $k\ge 2$ be a positive integer and $\delta$ a real number satisfying 
\begin{align}
\label{eq:main1delta}
N^{o(1)}\delta \le \frac{1}{T}\frac{V^{4k}}{N^{3k-1}} , \quad \text{and} \quad  N^{o(1)}\delta \le \frac{N^{1+1/(k-1)}}{T}.
\end{align}
We have 
\begin{align*}
|\cA|\ll \frac{1}{\delta}\frac{N^{2+o(1)}}{V^2}+\frac{T^{1/3}}{\delta^{1/3}}\frac{N^{k+4/3+o(1)}}{V^{4k/3+4/3}}.
\end{align*}
\end{theorem}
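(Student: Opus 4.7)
The plan is to follow the Bourgain--Heath-Brown framework outlined in the introduction, augmented by the two ingredients the author foregrounds: the weighted convolution estimate for the sums~\eqref{eq:hbgeneral} (which has superior dependence on the Huxley parameter $\delta$ compared to a direct application of Theorem~\ref{thm:heathbrown}) and Ivi\'c's device of completing the Hal\'asz--Montgomery kernel onto $\zeta(\tfrac12+\sigma+it)$ for a small $\sigma>0$ rather than onto $\zeta(\tfrac12+it)$, which is advantageous in the regime $N\ge T^{2/3}$. The aim is a bound of the shape $|\cA|^3\ll \delta^{-1}TN^{3k+4+o(1)}/V^{4k+4}$, whose cube root produces the second term of the theorem; combined with the Montgomery-type contribution $N^{2+o(1)}/(\delta V^2)$ this yields the estimate.

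First I would dualize: choose $c_t$ with $\sum_{t\in\cA}|c_t|^2=1$ and $V|\cA|^{1/2}\ll|\sum_t c_t\sum_n a_n n^{it}|$, and apply Cauchy--Schwarz to arrive at
$$V^2|\cA|\ll N\Bigl|\sum_{t_1,t_2\in\cA}c_{t_1}\bar c_{t_2}K(t_1-t_2)\Bigr|,\qquad K(\tau)=\sum_{N\le n\le 2N}n^{i\tau}.$$
The diagonal contribution $K(0)=N$ yields the Montgomery term $N^{2+o(1)}/V^2$, which together with a Huxley subdivision at scale $\delta$ explains the $\delta^{-1}$ in the first term of the theorem. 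The pairs with $|t_1-t_2|>\delta T$ are handled by completing onto $\zeta(\tfrac12+\sigma+it)$ and invoking the moment estimates for the Riemann zeta function at the shifted line, so that after optimizing $\sigma$ in Ivi\'c's fashion their contribution falls inside the first term. The critical pairs are those with $1\le|t_1-t_2|\le \delta T$, and for these I would insert a $k$-fold H\"older inequality that replaces the sum over the pair $(t_1,t_2)$ by a sum over $2k$-tuples, producing an energy $T_k(\cA)$-type term together with weighted factors $\gamma(t)$ that are appropriate powers of $|c_t|$. This H\"older step is exactly what transforms the $V^4$ exponent of the $k=1$ Bourgain argument into the $V^{(4k+4)/3}$ exponent here, and the invocation of the generalized Heath-Brown bound for~\eqref{eq:hbgeneral} with these weights will deliver the desired cubic bound on $|\cA|$.

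The main obstacle is the H\"older step: one must verify that, with the chosen weights $\gamma$, the dominant branch of the generalized Heath-Brown bound for~\eqref{eq:hbgeneral} is the $T^{1/2}|\cA|^{5/4}$ branch rather than the diagonal $N|\cA|$ branch or the $|\cA|^2$ branch, and that the exponents in $N$ and $V$ ultimately match $k+4/3$ and $(4k+4)/3$. The hypotheses~\eqref{eq:main1delta} should be exactly what is needed for this: I expect the first, $N^{o(1)}\delta\le V^{4k}/(TN^{3k-1})$, to force the $T^{1/2}|\cA|^{5/4}$ branch to dominate in the relevant regime, while the second, $N^{o(1)}\delta\le N^{1+1/(k-1)}/T$, should control the validity of the H\"older exchange through the exponent $1/(k-1)$. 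Once this bookkeeping is completed, summing over the $O(\log T)$ dyadic scales introduces only an $N^{o(1)}$ factor and closes the argument.
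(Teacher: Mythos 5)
Your sketch has the right high-level flavour (Montgomery diagonal term plus a Heath-Brown convolution step plus a Hölder interpolation with parameter $k$, targeting a bound with $|\cA|^3\ll \delta^{-1}TN^{3k+4}/V^{4k+4}$), but the mechanism you describe is not the one that makes the proof work, and several specific claims are misdirected. First, the Ivi\'c shifted-$\zeta$ device plays no role in Theorem~\ref{thm:main1}; it is used only in Theorem~\ref{thm:main4}. In particular, pairs with $|t_1-t_2|>\delta T$ are \emph{not} handled by completing onto $\zeta(1/2+\sigma+it)$ — they are simply discarded by the Huxley subdivision, which is what produces the $1/\delta$ loss via Corollary~\ref{eq:ell2ell2c}. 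Second, the crucial reduction you omit is Lemma~\ref{lem:mainvlarge1} (built on Jutila's Lemma~\ref{lem:jut}): for $|t_1-t_2|\le\delta T$ the kernel $K(t_1-t_2)$ of length $N$ is replaced by a \emph{short} Dirichlet polynomial of length $\delta T/N\ll N^{1/2}$ because $N\ge T^{2/3}$. Everything that follows operates on this short polynomial, not on the original one.

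Third, the Hölder step is not ``replace the pair sum by $2k$-tuples giving $T_k(\cA)$.'' The paper performs a double dyadic level-set decomposition, in both the difference-count $r(\ell)$ and the size of $\bigl|\sum_{n\le\delta T/N}n^{-1/2+i\ell}\bigr|$, landing on a set $D$ of integers with $r(\ell)\asymp\Delta$ and polynomial values $\asymp H$, so the main term is $W_0\asymp\Delta H|D|$. Hölder is then applied to the factorization $\Delta H|D|=(\Delta^2|D|)^{1/2k}(\Delta|D|)^{1-1/k}(|D|H^{2k})^{1/2k}$, combining three inputs: (i) a Heath-Brown energy bound $\Delta^2|D|\ll N^{4+o(1)}|\cA|/V^4$, obtained by applying Theorem~\ref{thm:heathbrown} \emph{to the level set $D$ of integers} (after the case split $|D|<N$ vs.\ $|D|\ge N$), (ii) the trivial $\Delta|D|\ll I(\delta T,\cA)$, and (iii) the classical $2k$-th moment $|D|H^{2k}\ll(\delta T)^{1+o(1)}$ from Lemma~\ref{lem:classicalmoments}. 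The weighted estimate~\eqref{eq:hbgeneral} and $T_k(\cA)$ energies never enter. Finally, the conditions~\eqref{eq:main1delta} do not serve to ``force the $T^{1/2}|\cA|^{5/4}$ branch to dominate''; that branch is killed by $|\cA|\le N$ and $N\ge T^{2/3}$. Rather, the first $\delta$-condition absorbs the contribution of the large-$|D|$ case (where only the moment estimate is available), and the second ensures the Hölder exponent $1-1/k$ on $I(\delta T,\cA)$ can be pulled through the final iteration with Corollary~\ref{eq:ell2ell2c}. Without the short-polynomial reduction and the three-way dyadic Hölder your outline does not close.
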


\begin{theorem}
\label{thm:main4}
Suppose $N,T,V$ are positive real numbers and $a_n$ a sequence of complex numbers satisfying $|a_n|\le 1$. Let  $\cA\subset[0,T]$ be a well spaced set satisfying 
\begin{align*}
\left|\sum_{N\le n\le 2N}a_n n^{it} \right|\ge V, \quad t\in \cA.
\end{align*}
Suppose $T,V,N,\cA$ satisfy
\begin{align}
\label{eq:main4ass}
\quad |\cA|\le \min\left\{N,\frac{N^4}{T^2}\right\} \quad V\ge N^{25/32+o(1)}.
\end{align}
For any $0<\delta\le 1$ satisfying 
\begin{align}
\label{eq:main4deltaass}
\delta \ge \frac{N^{26+o(1)}}{V^{32}T}, \quad N^{o(1)}\delta \le \frac{V^{16}}{N^{11}T},
\end{align} 
we have 
\begin{align*}
|\cA|&\ll\frac{1}{\delta}\frac{N^{2+o(1)}}{V^2}+\frac{\delta T^2 N^{4+o(1)}}{V^{8}}+\frac{N^{8+o(1)}}{\delta^2 TV^8}+\frac{N^{10+o(1)}}{\delta^{2/3}V^{12}}.
\end{align*}
\end{theorem}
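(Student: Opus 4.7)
The plan is to follow the Bourgain--Heath-Brown architecture used to establish Theorem~\ref{thm:bou}, but to inject the Ivi\'{c}-type refinement outlined at the end of the introduction at the step where a Dirichlet polynomial is completed onto the zeta function. The assumption $|\cA|\le N^4/T^2$ in~\eqref{eq:main4ass} effectively places us in the short-sum regime $N\le T^{1/2+o(1)}$, which is precisely the regime in which that refinement is non-trivial.

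First I would apply Huxley's subdivision to split $[0,T]$ into $O(1/\delta)$ sub-intervals of length $\delta T$ and pass, at a cost of a factor $1/\delta$, to a set $\cA$ of diameter at most $\delta T$. Hal\'{a}sz--Montgomery duality reduces the estimation of $V^2|\cA|$ to bounding the weighted convolution sums in~\eqref{eq:hbgeneral}, to which I would apply the generalized Heath-Brown estimate. A Balog--Szemer\'{e}di--Gowers type dichotomy then splits $\cA$ according to its additive energy: if $E(\cA)$ is small then~\eqref{eq:hbgeneral} applies directly and produces the first two terms of the conclusion, including $\delta T^2N^4/V^8$ from the $T^{1/2}|\cA|^{5/4}$ contribution; if $E(\cA)$ is large then one extracts a subset $\cA'$ of small doubling, and the large values of the Dirichlet polynomial at $t\in\cA$ propagate to large values of a Dirichlet polynomial at points of the difference set $\cA'-\cA'$.

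The Ivi\'{c} refinement enters when the difference-set sums are completed onto $\zeta$. Rather than completing directly to $\zeta(1/2+it)$, I would insert a shift $\sigma>0$, rewriting $n^{it}=n^{\sigma}\cdot n^{-\sigma+it}$ so as to complete onto $\zeta(1/2+\sigma+it)$, where stronger power-of-$T$ moment estimates are available. Applying the fourth moment of $\zeta$ at this shifted line, together with another application of~\eqref{eq:hbgeneral} to the resulting correlation sums, should produce the term $N^{8+o(1)}/(\delta^{2}TV^{8})$, where the factor $1/T$ reflects the power-saving from the shifted fourth moment. The final term $N^{10+o(1)}/(\delta^{2/3}V^{12})$ comes from a further iteration of the BSG reduction on the $\cA$-side, in the spirit of the $k=2$ case of Theorem~\ref{thm:main1}, and is a strictly additive-combinatorial contribution with no $T$-dependence.

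The main obstacle will be the simultaneous calibration of Huxley's $\delta$, the BSG threshold, and the Ivi\'{c} shift $\sigma$ so that the four listed terms emerge as the only surviving contributions; the constraints~\eqref{eq:main4deltaass} on $\delta$ and the threshold $V\ge N^{25/32+o(1)}$ in~\eqref{eq:main4ass} should fall out as the consistency conditions of this optimization, with $V=N^{25/32}$ marking the point at which the new term $N^{10}/(\delta^{2/3}V^{12})$ becomes competitive with the Bourgain terms from Theorem~\ref{thm:bou}. A secondary difficulty is propagating the positive weights $\gamma(t_1)\gamma(t_2)$ in~\eqref{eq:hbgeneral} through the BSG extraction step, since the subset produced will not generally be uniformly distributed in the interval of diameter $\delta T$ to which we have reduced.
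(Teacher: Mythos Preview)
Your proposal misidentifies two structural ingredients of the proof.

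First, the Ivi\'{c} shift: the paper shifts to $\zeta(5/8+it)$ and applies the \emph{eighth} moment estimate (Theorem~\ref{lem:8thmoment}), not the fourth. A fourth-moment bound at a shifted line gives no power saving over the critical line---both are $T^{1+o(1)}$---so the factor of $1/T$ you attribute to ``the shifted fourth moment'' cannot arise this way. The threshold $V\ge N^{25/32}$ in~\eqref{eq:main4ass} and the shape of the last two terms in the conclusion are tied to the decomposition
\[
\Delta |D| H^2 \ll (\Delta^2|D|)^{1/4}(\Delta|D|)^{1/2}(|D|H^8)^{1/4},
\]
which only makes sense with an eighth-moment input $|D|H^8\ll(\delta T)^{1+o(1)}$.

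Second, and more fundamentally, the argument is not organized around a Balog--Szemer\'{e}di--Gowers energy dichotomy. After Lemma~\ref{lem:mainvlarge1} produces a sum of $\bigl|\sum_{n\le \delta T/N} n^{-1/2+i(t_1-t_2)}\bigr|$ over pairs in $\cA$, the paper applies H\"{o}lder to pass to fourth powers (so the Dirichlet polynomial now has length $(\delta T/N)^2$), and then invokes the van der Corput reflection of Corollary~\ref{cor:reflection} to convert this to a polynomial of length $\asymp N^2/(\delta T)$. This reflection step is entirely absent from your plan, and it is what makes the Ivi\'{c} shift nontrivial: the reflected length $N^2/(\delta T)$ is short compared to the ambient scale $\delta T$, which is exactly the regime in which completion onto $\zeta(5/8+it)$ beats completion onto $\zeta(1/2+it)$. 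The term $\delta T^2 N^4/V^8$ is the side term of this reflection, not a $T^{1/2}|\cA|^{5/4}$ contribution. The subsequent dichotomy is on the cardinality $|D|$ of a level set of the representation function $r(\ell)$, not on $E(\cA)$; and the term $N^{10}/(\delta^{2/3}V^{12})$ comes from combining the Heath-Brown bilinear bound $\Delta^2|D|\ll N^4|\cA|/V^4$ with the eighth-moment bound, not from a further BSG iteration in the style of Theorem~\ref{thm:main1}.

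A minor point: $|\cA|\le N^4/T^2$ with $|\cA|\ge 1$ forces $N\ge T^{1/2}$, the opposite of the ``short-sum regime $N\le T^{1/2+o(1)}$'' you asserted.
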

Our next result may be used to recover a zero density estimate of Ivi\'{c}~\cite[Theorem~11.5]{Ivic} with a slightly smaller range of parameters.
\begin{theorem}
\label{thm:main12}
Suppose $N,T,V$ are positive real numbers and $a_n$ a sequence of complex numbers satisfying $|a_n|\le 1$. Let  $\cA\subset[0,T]$ be a well spaced set satisfying 
\begin{align*}
\left|\sum_{N\le n\le 2N}a_n n^{it} \right|\ge V, \quad t\in \cA.
\end{align*}
Suppose $T,N,\cA$ satisfy 
$$N\ge T^{2/3}, \quad |\cA|\le N.$$
For any $0<\delta<1$ satisfying 
\begin{align*}
N^{o(1)}\delta \le \frac{V^8}{TN^5},
\end{align*} 
we have 
\begin{align*}
|\cA|\ll \frac{1}{\delta}\frac{N^{2+o(1)}}{V^2}+\frac{\delta^{2/3} T^{4/3}N^{23/3+o(1)}}{V^{12}}+\frac{T^{2/3}N^{14/3+o(1)}}{V^{20/3}}.
\end{align*}
\end{theorem}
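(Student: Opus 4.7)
The plan is to combine the Hal\'asz--Montgomery method with Huxley's subdivision at scale $\delta$, and to balance two different estimates for the off-diagonal correlation sum against the trivial large sieve bound. First, applying Hal\'asz--Montgomery after decomposing $[0,T]$ into $1/\delta$ subintervals of length $\delta T$ reduces the bound on $|\cA|$ to bounding weighted correlation sums of the form
$$S = \sum_{\substack{t_1,t_2\in\cA\\ |t_1-t_2|\le \delta T}} \gamma(t_1)\gamma(t_2) \left|\sum_{N \le n \le 2N} a_n n^{i(t_1-t_2)}\right|^2,$$
with nonnegative weights $\gamma(t)$ arising from $|f(t)|$. The diagonal contribution $t_1=t_2$ combined with the standard large sieve yields the first term $N^{2+o(1)}/(\delta V^2)$ of the conclusion.

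For the off-diagonal part I would use two independent estimates, each contributing one of the remaining terms. In the first, since $N\ge T^{2/3}$, Mellin inversion converts the inner Dirichlet polynomial into values of $\zeta(1/2+is)$, and H\"older's inequality with exponents tuned to Heath-Brown's twelfth moment bound
$$\int_0^T |\zeta(1/2+it)|^{12}\, dt \ll T^{2+o(1)}$$
splits $S$ into a product of a $\zeta$ moment and a fractional energy-type quantity on $\cA$. This route produces the middle term $\delta^{2/3}T^{4/3}N^{23/3+o(1)}/V^{12}$, with the factor $T^{4/3}=(T^2)^{2/3}$ coming from the twelfth moment raised to the power $1/3$ through H\"older. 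In the second, I would apply the generalized Heath-Brown convolution estimate~\eqref{eq:hbgeneral} directly to $S$. Under $|\cA|\le N$ and $N\ge T^{2/3}$ this gives a bound of schematic shape
$$S \ll (NT)^{o(1)}\Gamma\bigl(|\cA|^2 + N|\cA| + (\delta T)^{1/2}|\cA|^{5/4}\bigr),$$
with $\Gamma$ collecting the weights. Inserting this into the Hal\'asz--Montgomery-type lower bound on $S$ and taking the dominant $(\delta T)^{1/2}|\cA|^{5/4}$ part produces the third term $T^{2/3}N^{14/3+o(1)}/V^{20/3}$.

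Combining these three contributions and using the constraint $N^{o(1)}\delta \le V^8/(TN^5)$ to ensure the intermediate inequalities remain valid yields the stated bound. The main obstacle I expect is the choice of H\"older exponents in the $\zeta$-moment route: the twelfth moment of $\zeta$ must combine cleanly with the Hal\'asz--Montgomery weights $\gamma(t)$ to produce precisely the exponents of $T$, $\delta$, and $V$ in the second term. This balance differs from those used in Theorems~\ref{thm:bou} and~\ref{thm:main1} and requires careful tracking of the weights, together with a mild use of Cauchy--Schwarz to pass from the fractional energy quantity back to a bound on $|\cA|$.
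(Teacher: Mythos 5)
Your outline misses the two mechanisms that actually drive the proof, and attributes the exponents to an ingredient the paper does not use here.

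First, the twelfth moment of $\zeta$ plays no role in Theorem~\ref{thm:main12}. It appears only in the zero--density deduction (Lemma~\ref{lem:zerodensity2}); the $\zeta$-moment route you describe (Mellin completion, H\"older against a high moment) is the strategy of Theorem~\ref{thm:main4}, where the \emph{eighth} moment at $\sigma=5/8$ is used, not this theorem. The exponents $\delta^{2/3}$, $T^{4/3}$, $N^{23/3}$ in the middle term are not ``a moment raised to the $1/3$''; they come from solving the relation $|\cA|^2\ll\delta^{-1}I(\delta T,\cA)$ after $I(\delta T,\cA)$ has been bounded by a multiple of $|\cA|^{1/2}$, so one ends up taking a $2/3$ power of the bracket. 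This is where all the denominators of $3$ originate.

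Second, your correlation sum carries the Dirichlet polynomial over $N\le n\le 2N$. The step you are missing is Lemma~\ref{lem:mainvlarge1}, which uses Jutila's reflection (Lemma~\ref{lem:jut}) to replace this length-$N$ polynomial by one of length $\delta T/N$. Since $N\ge T^{2/3}$ forces $\delta T/N\le T^{1/3}$, this is a drastic shortening and is essential: without it, Theorem~\ref{thm:largeadditive} applied to a polynomial of length $N$ together with the Hal\'asz lower bound produces only the trivial estimate. Relatedly, your ``Heath-Brown term'' $(\delta T)^{1/2}|\cA|^{5/4}$ is not even present in the relevant regime: the hypothesis $N\ge T^{2/3}$ (or, after reflection, $|D|\le N$) puts you in the case of Theorem~\ref{thm:largeadditive} where that term is absent.

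What the paper actually does after the reflection: introduce the representation count $r(\ell)=|\{(t_1,t_2)\in\cA^2:0\le t_1-t_2-\ell<1\}|$, Cauchy--Schwarz in the short variable $n\le\delta T/N$ to expose $\sum r(\ell_1)r(\ell_2)|\sum_{n\le\delta T/N}n^{-1/2+i(\ell_1-\ell_2)}|^2$, apply Theorem~\ref{thm:largeadditive} with weights $\gamma=r$ to reduce everything to $\|r\|_2^2$, then invoke the energy estimate Lemma~\ref{lem:e2energy} (which is itself an application of Heath-Brown's convolution bound plus the pointwise hypothesis $|D(t)|\ge V$) to bound $\|r\|_2^2$ by $\frac{N^{3/2}}{V^2}|\cA|^{1/2}I(\delta T,\cA)+\frac{N^4}{V^4}|\cA|$. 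The two terms of this energy bound are precisely what produce your second and third displayed terms after Corollary~\ref{eq:ell2ell2c}; in particular the $T^{2/3}N^{14/3}/V^{20/3}$ term comes from the $N^4|\cA|/V^4$ piece of Lemma~\ref{lem:e2energy}, not from the $(\delta T)^{1/2}|\cA|^{5/4}$ term you cite. The hypothesis $N^{o(1)}\delta\le V^8/(TN^5)$ is used exactly once, to discard the $\|r\|_1^2$ and $(\delta T)^{1/2}\|r\|_1^{1/2}\|r\|_2^{3/2}$ terms from Theorem~\ref{thm:largeadditive}.
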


In applications to zero density estimates, one may use results of Huxley~\cite{Hux} or Jutila~\cite{Jut} to verify the conditions  on $\cA$ hold in the above results. 
\section{Zero density estimates for the Riemann zeta function}
\label{sec:zerodensity}
For $1/2\le \sigma \le 1$ and $T\gg 1$ we let  $N(\sigma,T)$ count the number of $\rho=\beta+i\gamma$ satisfying
$$\zeta(\rho)=0, \quad \beta \ge \sigma, \quad 0\le \gamma \le T.$$
By combining the results from Section~\ref{sec:largevalue} with the method of zero detection polynomials we give some new bounds for $N(\sigma,T)$. 
\begin{theorem}
\label{thm:zerodensity2}
If $\sigma$ satisfies 
\begin{align}
\label{eq:sigmacond3}
\sigma \ge \frac{23}{29},
\end{align}
then we have 
\begin{align}
\label{eq:bouest}
N(\sigma,T)\ll T^{3(1-\sigma)/2\sigma+o(1)}.
\end{align}
\end{theorem}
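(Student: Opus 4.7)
The plan is to reduce Theorem~\ref{thm:zerodensity2} to the large values estimates of Section~\ref{sec:largevalue} via the classical zero-detection polynomial method, then to optimise parameters so that the resulting bound on $|\cA|$ is at most $T^{3(1-\sigma)/(2\sigma)+o(1)}$. First, I would recall the standard reduction: given a zero $\rho=\beta+i\gamma$ with $\beta\ge\sigma$ and $|\gamma|\le T$, combining the mollifier $M(s)=\sum_{n\le y}\mu(n)n^{-s}$ with a Mellin/Perron cutoff produces a dyadic range $[N,2N]$ and coefficients $a_n$, $|a_n|\le 1$, with $\bigl|\sum_{N\le n\le 2N}a_n n^{-\rho}\bigr|\gg 1$. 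Pigeonholing on $N$ lets us assume a single scale services $\gg N(\sigma,T)/(\log T)^{O(1)}$ zeros. After factoring out $n^{-\beta}$ and renormalising the coefficients by the factor $N^{\sigma}$ (which preserves the $|a_n|\le 1$ bound up to a constant), we obtain a well-spaced set $\cA\subset[0,T]$ of size $\gg N(\sigma,T)/T^{o(1)}$ together with the large-values condition $\bigl|\sum_{N\le n\le 2N}a_n n^{it}\bigr|\ge V$ for $V\sim N^{\sigma}$. It therefore suffices to show $|\cA|\ll T^{3(1-\sigma)/(2\sigma)+o(1)}$ uniformly in the relevant range of $N$.

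The next step is a case split on $N$. For small $N$ (say $N\le T^{2/3}$), the classical bounds of Huxley and Ivi\'c (cf.\ the remark following Theorem~\ref{thm:main12}) deliver the target exponent directly in the stated range of $\sigma$. The hard range is $N\ge T^{2/3}$, where I apply Theorem~\ref{thm:main1}. Setting $V=N^{\sigma}$, the conclusion of that theorem reads
\begin{align*}
|\cA|\ll \frac{N^{2-2\sigma+o(1)}}{\delta}+\frac{T^{1/3}}{\delta^{1/3}}\,N^{k+4/3-4k\sigma/3+o(1)},
\end{align*}
subject to the twin constraints~\eqref{eq:main1delta}. I would then choose $k$ so that the second term has $N$-exponent as negative as possible while respecting~\eqref{eq:main1cond}, and choose $\delta$ to balance the two terms above against each other (equivalently, against the zero-detection count $N(\sigma,T)\cdot\text{(sparsity factor)}$). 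Enforcing the balance $\delta^{-1}N^{2-2\sigma}\sim \delta^{-1/3}T^{1/3}N^{k+4/3-4k\sigma/3}$ fixes $\delta$ in terms of $N,T,\sigma$, and plugging back into the first term gives a bound of the form $T^{a(\sigma,k)}N^{b(\sigma,k)+o(1)}$. One then maximises over $N$ in the allowed window, which for the extremal $N$ yields an exponent matching $3(1-\sigma)/(2\sigma)$ precisely when $\sigma\ge 23/29$.

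The main obstacles are bookkeeping rather than conceptual. First, the admissibility constraints~\eqref{eq:main1delta} translate into lower and upper bounds on $\delta$ that must be compatible with the balancing choice; failure of compatibility at a specific $(k,\sigma)$ is exactly what produces the threshold $23/29$, so the optimization must be done carefully with $k$ taken as a free integer parameter (one checks that a small value such as $k=2$ or $k=3$ is optimal in the critical regime). Second, one must verify that the ancillary hypothesis $|\cA|\le N$ in Theorem~\ref{thm:main1} holds throughout the iteration, which can be arranged by an \emph{a priori} Huxley-type bound and a standard bootstrap argument. Finally, gluing the small-$N$ and large-$N$ regimes at $N=T^{2/3}$ requires a continuity check showing that the Huxley bound dominates below $T^{2/3}$ while the new estimate dominates above it, precisely within the range $\sigma\ge 23/29$. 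Once these three items are verified, assembling the inequalities and summing over the $O(\log T)$ dyadic scales delivers the stated bound~\eqref{eq:bouest}.
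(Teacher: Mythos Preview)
Your overall strategy---reduce to a large values problem, then split on $N$ and combine a new estimate with Huxley's---is correct, but you have chosen the wrong large values theorem. The paper proves Theorem~\ref{thm:zerodensity2} using Theorem~\ref{thm:main4}, not Theorem~\ref{thm:main1}. This is not a cosmetic difference: Theorem~\ref{thm:main4} feeds in the eighth moment bound for $\zeta(5/8+it)$ (Lemma~\ref{lem:8thmoment}) and produces the estimate~\eqref{eq:main4zerodensity}, whose critical term $N^{10-12\sigma}$ is what, when balanced against Huxley's $TN^{4-6\sigma}$, yields precisely the threshold $\sigma=23/29$. Theorem~\ref{thm:main1} is used in the paper for a \emph{different} zero density result (Theorem~\ref{thm:zerodensity1}, with $k=7$) and does not give the density hypothesis exponent down to $23/29$. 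If you carry out the balancing you describe for small $k$ at $\sigma=23/29$, you will find the resulting bound on $|\cA|$ exceeds $T^{3(1-\sigma)/(2\sigma)}$ for $N$ near the bottom of the admissible range, so the claimed optimisation does not close.

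Two further points where your outline diverges from what actually works. First, the zero-detection reduction should be Lemma~\ref{lem:zerodensity2} with the specific choice $Y=T^{3/(8\sigma)-o(1)}$; this already disposes of the class-II zeros via Heath-Brown's twelfth moment and confines $N$ to $Y^{4/3}\le N\le Y^{2+o(1)}$, so there is no separate ``small $N\le T^{2/3}$'' case to treat. Second, the split in $N$ is not at $T^{2/3}$ but at the data-dependent point $T^{\rho}$ with
\[
\rho=\min\Bigl\{\tfrac{3(1-\sigma)}{2\sigma(10-12\sigma)},\ \tfrac{3}{16\sigma}+\tfrac{1}{8(1-\sigma)}\Bigr\},
\]
applying Theorem~\ref{thm:main4} (with $N^{o(1)}\delta=\min\{1,V^{3}/(TN)\}$) below $T^{\rho}$ and Huxley above. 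Checking that the hypotheses~\eqref{eq:main4ass} and~\eqref{eq:main4deltaass} hold for this $\delta$ in the range $\sigma\ge 23/29$ is the arithmetic that actually pins down the threshold.
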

Theorem~\ref{thm:zerodensity2} improves a result of Bourgain~\cite{Bou1} who previously obtained the estimate~\eqref{eq:bouest} in the range 
\begin{align*}
\sigma\ge \frac{3734}{4694}=\frac{23}{29}+\frac{162}{68063}.
\end{align*} 
\begin{theorem}
\label{thm:zerodensity1}
If $\sigma$ satisfies 
\begin{align}
\label{eq:density1cond}
\frac{127}{168}\le \sigma \le \frac{107}{138},
\end{align}
then we have 
 \begin{align*}
N(\sigma,T)\ll T^{36(1-\sigma)/(138\sigma-89)+o(1)}+T^{(114\sigma-79)/(138\sigma-89)+o(1)}.
\end{align*}
In particular, if 
$$\frac{127}{168}\le \sigma \le \frac{23}{30},$$
then we have 
 \begin{align}
\label{eq:thm:zerodensity1}
N(\sigma,T)\ll T^{36(1-\sigma)/(138\sigma-89)+o(1)}.
\end{align}
\end{theorem}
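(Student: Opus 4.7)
\bigskip
\noindent\textbf{Proof proposal for Theorem \ref{thm:zerodensity1}.}

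The plan is to follow the standard zero-detection reduction, as used by Bourgain and Jutila, and then feed the resulting large-values problem into Theorem \ref{thm:main12}. First I would introduce a mollifier $M_X(s)=\sum_{n\le X}\mu(n)n^{-s}$, form the usual identity writing $\zeta M_X-1$ as the Mellin transform of a Dirichlet series, and use a contour shift to the line $\mathrm{Re}\,s=\sigma$ to force each zero $\rho=\beta+i\gamma$ with $\beta\ge\sigma$ to produce a point $\gamma$ at which a short Dirichlet polynomial $\sum_{N\le n\le 2N}a_n n^{-\sigma-i\gamma}$ is at least of size $N^{o(1)}$. After a dyadic decomposition in $N$ and a standard well-spacing argument, this gives, for some $N$ in a range $T^{o(1)}\le N\le T^{c}$,
\begin{equation*}
N(\sigma,T)\ll T^{o(1)}|\cA|,\qquad \left|\sum_{N\le n\le 2N} a_n n^{it}\right|\ge V,\ t\in \cA,
\end{equation*}
with $V$ of size roughly $N^{1-\sigma}$ up to $T^{o(1)}$ factors, and $\cA$ well-spaced in $[0,T]$. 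The hypothesis $|\cA|\le N$ in Theorem \ref{thm:main12} is verified via Huxley's subdivision/reflection bound in the relevant range, while $N\ge T^{2/3}$ is the range in which the refined estimate is useful.

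Next I would insert the bound from Theorem \ref{thm:main12},
\begin{equation*}
|\cA|\ll \frac{1}{\delta}\frac{N^{2+o(1)}}{V^2}+\frac{\delta^{2/3} T^{4/3}N^{23/3+o(1)}}{V^{12}}+\frac{T^{2/3}N^{14/3+o(1)}}{V^{20/3}},
\end{equation*}
substitute $V=N^{1-\sigma}$, and optimize $\delta$ in the admissible window $N^{o(1)}\delta\le V^8/(TN^5)$. Balancing the first two terms against each other gives $\delta\sim N^{(2+16\sigma-12)/...}$ — more concretely, equating $N^{2-2(1-\sigma)}/\delta$ with $\delta^{2/3}T^{4/3}N^{23/3-12(1-\sigma)}$ yields a power of $\delta$ that, once substituted back, produces a contribution of the form $T^{4/5}\cdot N^{\text{something}(\sigma)}$ to $|\cA|$. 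Combining this with the third term $T^{2/3}N^{(14-20(1-\sigma))/3}$ and remembering that the worst $N$ is typically either the lower endpoint $N\sim T^{2/3}$ or a saddle point, one extracts the shape $T^{36(1-\sigma)/(138\sigma-89)}+T^{(114\sigma-79)/(138\sigma-89)}$; the coefficients $138$ and $36$ should arise from combining the exponents $12$ and $23/3$ in the middle term of Theorem \ref{thm:main12} with the conversion $V=N^{1-\sigma}$, while the shape of the second term is dictated by the third summand in Theorem \ref{thm:main12}.

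The range $127/168\le \sigma \le 107/138$ should emerge from two constraints: the lower limit comes from requiring the admissibility condition $\delta\le V^8/(TN^5)$ to be compatible with the saddle value of $\delta$ at the most constraining choice of $N$, and the upper limit comes from the crossover with Bourgain's (or Jutila's) bound becoming superior at $\sigma\ge 107/138$. The statement that the second term drops out for $\sigma\le 23/30$ is the trivial comparison $36(1-\sigma)\ge 114\sigma-79\iff \sigma\le 23/30$, and does not require any new input.

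The main obstacle in this plan is not the application of Theorem \ref{thm:main12} itself, but the bookkeeping of the parameter $\delta$: because $\delta$ corresponds to Huxley's subdivision, the bound on $|\cA|$ has to be combined with a separate estimate on each Huxley subinterval and then summed, so one must keep track of which subintervals are in the regime where Theorem \ref{thm:main12} applies and where some classical Hal\'asz--Montgomery bound must be substituted instead. Verifying that the optimal $\delta$ indeed satisfies $N^{o(1)}\delta\le V^8/(TN^5)$ precisely in the interval $127/168\le\sigma\le107/138$, and no wider, is the delicate numerical step of the argument.
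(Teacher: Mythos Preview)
Your proposal has two concrete errors that prevent it from recovering the stated exponents.

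First, you feed the large-values problem into Theorem~\ref{thm:main12}, but the paper uses Theorem~\ref{thm:main1} with $k=7$. The denominator $138\sigma-89$ is produced precisely by this choice: with $k=7$ and $V=N^{\sigma}$, Theorem~\ref{thm:main1} gives
\[
|\cA|\ll \frac{1}{\delta}N^{2-2\sigma+o(1)}+\frac{T^{1/3}}{\delta^{1/3}}N^{25/3-32\sigma/3+o(1)},
\]
and with $\delta=\min\{N^{7/6}/T,1\}$ (the upper endpoint of the admissible window in~\eqref{eq:main1delta} for $k=7$) one obtains the four terms $N^{2(1-\sigma)}$, $T^{1/3}N^{25/3-32\sigma/3}$, $TN^{5/6-2\sigma}$, $T^{2/3}N^{143/18-32\sigma/3}$. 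The paper then splits the range $Y^{4/3}\le N\le Y^2$ at $Z=T^{2/(13-14\sigma)}$, using Huxley's bound $TN^{4-6\sigma}$ above $Z$, and chooses $Y=T^{9/(138\sigma-89)}$ to balance $Y^{4(1-\sigma)}$ against $T^{2/3}Y^{4(143/18-32\sigma/3)/3}$. If instead you use Theorem~\ref{thm:main12}, balancing the first two terms gives $T^{4/5}N^{(27-40\sigma)/5}$ and a third term $T^{2/3}N^{(14-20\sigma)/3}$; neither the shape nor the optimal $Y$ matches, and indeed the paper remarks that Theorem~\ref{thm:main12} recovers a different zero-density estimate (Ivi\'c's).

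Second, the relation $V\sim N^{1-\sigma}$ is backwards. In the zero-detection reduction of Lemma~\ref{lem:zerodensity2} the polynomial satisfies $|\sum_{N\le n\le 2N}a_n n^{it}|\ge N^{\sigma}$ with $|a_n|\le 1$, so $V=N^{\sigma}$. With $V=N^{1-\sigma}$ you would not even meet the hypothesis $V\ge N^{3/4+o(1)}$ of the large-values theorems in the range $\sigma\ge 127/168$, and none of the subsequent arithmetic would come out right. Your description of where the endpoints $127/168$ and $107/138$ come from is also off: they arise from the requirement that the balance point $Z$ lie in $[Y^{4/3},Y^2]$ and that $Y\ge T^{1/2}$, not from the $\delta$-admissibility condition of Theorem~\ref{thm:main12}.
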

 Ivi\'{c}~\cite[Equation~(11.85)]{Ivic} has obtained 
\begin{align*}
N(\sigma,T)\ll T^{3(1-\sigma)/(7\sigma-4)+o(1)}, \quad \frac{3}{4}\le \sigma\le \frac{10}{13}.
\end{align*}
Theorem~\ref{thm:zerodensity1} provides an improvement on this bound in the range  
$$\frac{41}{54}\le \sigma \le \frac{845+\sqrt{7429}}{1212}.$$ Arguments of Jutila~\cite{Jut}, see also~\cite[Section~11.7]{Ivic} imply the estimate 
\begin{align}
\label{eq:zerodensityJJ}
N(\sigma,T)\ll T^{3k(1-\sigma)/((3k-2)\sigma+2-k)+o(1)},
\end{align}
for any integer $k\ge 2$ valid in the range of parameters given by~\cite[Equation~11.76]{Ivic}. Considering the discussion on~\cite[pp. 289]{Ivic}, in order to use~\eqref{eq:zerodensityJJ} for $\sigma\le 13/17$ we need to take $k\ge 5$. Comparing~\eqref{eq:thm:zerodensity1} with $k\ge 5$ of~\eqref{eq:zerodensityJJ}, we obtain an improvement provided
\begin{align*}
\frac{409}{534}\le \sigma\le  \frac{23}{30}=\frac{409}{534}+\frac{1}{1335}.
\end{align*}

\section{Preliminary results}
In what follows we  assume $T^{o(1)}\le N\le T^{O(1)}$. This implies terms $N^{o(1)}$ and $T^{o(1)}$ have the same meaning.
\newline

\label{sec:prelim}
 Given $\Delta>0$ define 
\begin{align}
\label{eq:IdAset}
I(\Delta,\cA)=\sum_{\substack{t_1,t_2\in \cA \\ |t_1-t_2|\le \Delta}}1.
\end{align}

\begin{lemma}
\label{lem:ell2counting}
Let $\cA\subseteq \R$ be finite and $\Delta>0$. For integer $k$ define
$$\cA_{k}=\{ t\in \cA \ : \ k\Delta<t\le (k+1)\Delta\}.$$
Then we have 
\begin{align*}
\sum_{k}|\cA_k|^2\ll I(\Delta,\cA)\ll \sum_{k}|\cA_k|^2.
\end{align*}
\end{lemma}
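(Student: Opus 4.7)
The plan is to prove the two inequalities separately by exploiting the fact that the condition $|t_1-t_2|\le\Delta$ forces the two points to lie in boxes $\cA_k$ whose indices differ by at most one.

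For the lower bound $\sum_k|\cA_k|^2\ll I(\Delta,\cA)$, I would argue directly: whenever $t_1,t_2$ both belong to the same box $\cA_k$, they satisfy $|t_1-t_2|<\Delta$, so the pair $(t_1,t_2)$ contributes to $I(\Delta,\cA)$. Summing over $k$ gives
\begin{align*}
\sum_k|\cA_k|^2\le I(\Delta,\cA).
\end{align*}

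For the upper bound, I would observe that if $t_1\in\cA_{k_1}$, $t_2\in\cA_{k_2}$ and $|t_1-t_2|\le\Delta$, then necessarily $|k_1-k_2|\le 1$. Therefore
\begin{align*}
I(\Delta,\cA)\le \sum_{k}|\cA_k|\bigl(|\cA_{k-1}|+|\cA_k|+|\cA_{k+1}|\bigr),
\end{align*}
and applying the elementary inequality $ab\le\tfrac{1}{2}(a^2+b^2)$ to each cross term, or equivalently shifting the index in each of the three pieces, gives $I(\Delta,\cA)\ll\sum_k|\cA_k|^2$.

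Neither step is an obstacle; the only thing to be mildly careful about is the boundary case of a point landing exactly at an endpoint $k\Delta$, which is handled by the half-open convention $k\Delta<t\le(k+1)\Delta$ used in the definition of $\cA_k$. The whole lemma is purely combinatorial and requires no structural input from $\cA$ beyond its finiteness.
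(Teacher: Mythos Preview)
Your proof is correct and follows essentially the same approach as the paper: both establish the lower bound by noting that pairs within a single box $\cA_k$ automatically satisfy $|t_1-t_2|\le\Delta$, and both establish the upper bound by observing that $|t_1-t_2|\le\Delta$ forces $|k_1-k_2|\le 1$ and then bounding $\sum_{|k_1-k_2|\le 1}|\cA_{k_1}||\cA_{k_2}|\ll\sum_k|\cA_k|^2$. The paper leaves the last step implicit while you spell out the AM--GM argument, but there is no substantive difference.
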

\begin{proof}
The inequality 
\begin{align*}
\sum_{k}|\cA_k|^2\ll I(\Delta,\cA),
\end{align*}
follows from the observation that if $t_1,t_2\in \cA_k$ then $|t_1-t_2|\le \Delta.$ If $t_1,t_2$ satisfy
$$|t_1-t_2|\le \Delta,$$
then there exists $k_1,k_2$ satisfying
$$|k_1-k_2|\le 1,$$
such that $t_1\in \cA_{k_1}$ and $t_2\in \cA_{k_2}.$
This implies
$$I(\Delta,\cA)\le \sum_{\substack{k_1,k_2 \\ |k_1-k_2|\le 1}}|\cA_{k_1}||\cA_{k_2}|\ll \sum_{k}|\cA_k|^2.$$
\end{proof}

Combining Lemma~\ref{lem:ell2counting} with the Cauchy-Schwarz inequality gives the following result.
\begin{cor}
\label{eq:ell2ell2c}
Let $\cA\subseteq [0,T]$ be a well spaced set and $0<\delta\le 1$. We have 
\begin{align*}
|\cA|^2\ll \frac{1}{\delta} I(\delta T,\cA).
\end{align*}
\end{cor}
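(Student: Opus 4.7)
The plan is to deduce the corollary in one short step from Lemma~\ref{lem:ell2counting} together with Cauchy--Schwarz, using the fact that with $\Delta=\delta T$ there are only $O(1/\delta)$ nonempty buckets $\cA_k$ because $\cA$ lives inside an interval of length $T$.

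Concretely, I would apply Lemma~\ref{lem:ell2counting} with $\Delta=\delta T$, giving
\begin{align*}
I(\delta T,\cA)\gg \sum_{k}|\cA_k|^2,
\end{align*}
where $\cA_k=\{t\in\cA \ : \ k\delta T<t\le (k+1)\delta T\}$. Since $\cA\subseteq[0,T]$ and each bucket has length $\delta T$, only the indices $0\le k\le \lfloor 1/\delta\rfloor$ can contribute, so the set $\cK=\{k \ : \ \cA_k\neq\emptyset\}$ has cardinality $|\cK|\ll 1/\delta$ (using $\delta\le 1$ to absorb the additive $+1$).

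Now decompose $\cA$ as the disjoint union of the $\cA_k$ and apply Cauchy--Schwarz over $\cK$:
\begin{align*}
|\cA|^2=\left(\sum_{k\in\cK}|\cA_k|\right)^{\!2}\le |\cK|\sum_{k\in\cK}|\cA_k|^2\ll \frac{1}{\delta}\sum_{k}|\cA_k|^2\ll \frac{1}{\delta}\,I(\delta T,\cA),
\end{align*}
which is the required inequality.

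The argument is essentially forced once one has Lemma~\ref{lem:ell2counting}, so there is no real obstacle; the only thing to watch is the bound $|\cK|\ll 1/\delta$, which just uses $\cA\subseteq[0,T]$ and $0<\delta\le 1$. No well-spacedness of $\cA$ is actually needed for this step, although it is part of the standing hypotheses in the paper's setup.
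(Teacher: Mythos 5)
Your proof is correct and matches the paper's intent exactly: the paper gives no explicit argument, saying only that the corollary follows by "combining Lemma~\ref{lem:ell2counting} with the Cauchy--Schwarz inequality," and your write-up is the natural fleshing-out of that one-line indication (take $\Delta=\delta T$, note there are $O(1/\delta)$ buckets inside $[0,T]$, and apply Cauchy--Schwarz to $|\cA|=\sum_k|\cA_k|$). Your side remark that well-spacedness is not actually used here is also accurate.
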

%\begin{lemma}
%\label{lem:ell2countingT}
%Let $\cA\subseteq \R$ be finite and $\varepsilon>0$. For integers $k,n$ define
%$$r_{\cA,k}(n)=\{ (t_1,\dots,t_k)\in \cA^k \ : \ n\varepsilon<t_1+\dots+t_k\le (n+1)\varepsilon\}.$$
%Then we have 
%\begin{align*}
%\sum_{n}r_{\cA,k}(n)^2\ll T_k(\cA,\varepsilon)\ll \sum_{n}r_{\cA,k}(n)^2.
%\end{align*}
%\end{lemma}

For a proof of the following, see~\cite[Theorem~9.4]{IwKo}
\begin{lemma}
\label{lem:classicalmv}
For any set $\cA\subseteq [0,T]$ of well spaced points, integer $N$ and sequence of complex numbers $a_n$ we have 
$$\sum_{t\in \cA}\left|\sum_{1\le n \le N}a_n n^{it}\right|^2\ll (T+N)\|a\|_2^2(\log{N}).$$
\end{lemma}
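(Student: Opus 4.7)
The plan is to reduce the claim to the classical continuous $L^2$ mean value theorem for Dirichlet polynomials and then pass to the discrete $1$-spaced sum via Gallagher's inequality. I would first establish the continuous bound
\begin{align*}
\int_0^T \left|\sum_{n\le N}a_n n^{it}\right|^2 dt \ll (T+N)\|a\|_2^2.
\end{align*}
Expanding the modulus squared, the diagonal $n=m$ contributes exactly $T\|a\|_2^2$. For $n\ne m$ the integral $\int_0^T (n/m)^{it}\,dt=((n/m)^{iT}-1)/(i\log(n/m))$ is bounded in absolute value by $2/|\log(n/m)|\ll N/|n-m|$, using $|\log(n/m)|\ge |n-m|/\max(n,m)$. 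Hence the off-diagonal contribution is at most
$$\ll N\sum_{n\ne m}\frac{|a_n||a_m|}{|n-m|}\ll N\|a\|_2^2$$
by Hilbert's inequality.

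Next I would apply Gallagher's inequality: for any continuously differentiable $f$ and any $1$-spaced $\cA\subseteq[0,T]$,
$$\sum_{t\in\cA}|f(t)|^2\ll \int_0^T|f(t)|^2\,dt+\left(\int_0^T|f(t)|^2\,dt\right)^{1/2}\left(\int_0^T|f'(t)|^2\,dt\right)^{1/2}.$$
Taking $f(t)=\sum_{n\le N}a_n n^{it}$, so that $f'(t)=i\sum_{n\le N}(a_n\log n)n^{it}$, and applying the continuous bound both to $f$ and to $f'$ (i.e.\ with coefficient sequence $b_n=a_n\log n$, which satisfies $\|b\|_2^2\le(\log N)^2\|a\|_2^2$) gives $\int_0^T|f|^2\,dt\ll (T+N)\|a\|_2^2$ and $\int_0^T|f'|^2\,dt\ll (T+N)(\log N)^2\|a\|_2^2$. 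Substituting into Gallagher's bound yields
$$\sum_{t\in\cA}|f(t)|^2\ll (T+N)\|a\|_2^2+(T+N)(\log N)\|a\|_2^2\ll (T+N)(\log N)\|a\|_2^2,$$
which is the claim.

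The main technical step is the Hilbert inequality bound in the continuous estimate, together with the elementary inequality $|\log(n/m)|\ge |n-m|/\max(n,m)$ used to convert the denominator into something amenable to Hilbert's bound. The passage from the continuous integral to the discrete sum via Gallagher, and the control of $\|f'\|_2^2$ by re-applying the continuous estimate with a logarithmically distorted coefficient sequence, are routine.
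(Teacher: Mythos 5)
Your plan --- continuous $L^2$ mean value theorem followed by Gallagher's inequality --- is the standard route and is essentially how the reference the paper cites for this lemma (Iwaniec--Kowalski, Theorem~9.4) proceeds; the paper itself offers no proof, only that citation. There is, however, a genuine gap in your treatment of the continuous step. After taking absolute values in the off-diagonal terms you arrive at
\begin{align*}
N\sum_{n\ne m}\frac{|a_n||a_m|}{|n-m|},
\end{align*}
and you assert this is $\ll N\|a\|_2^2$ ``by Hilbert's inequality.'' Hilbert's inequality bounds the bilinear form with the \emph{signed} kernel $1/(n-m)$; the cancellation coming from the sign change is the entire content of that estimate, and once the kernel is replaced by $1/|n-m|$ it fails. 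Indeed with $a_n\equiv 1$ for $n\le N$ one has $\sum_{n\ne m\le N}|n-m|^{-1}\asymp N\log N=\|a\|_2^2\log N$, so the positive-kernel sum is larger by a full factor $\log N$, and that loss is unavoidable once the phases have been discarded.

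To recover the log-free continuous bound $\int_0^T|D(t)|^2\,dt\ll(T+N)\|a\|_2^2$ you must keep the phases. Writing $\int_0^T(n/m)^{it}\,dt=\left((n/m)^{iT}-1\right)/\left(i(\log n-\log m)\right)$, the off-diagonal contribution splits into two genuine bilinear forms with the signed kernel $(\log n-\log m)^{-1}$, and the Montgomery--Vaughan generalized Hilbert inequality (applied with the frequencies $\lambda_n=\log n$, whose nearest-neighbour spacing near $n$ is $\asymp 1/n$) gives $\left|\sum_{n\ne m}a_n\bar a_m(\log n-\log m)^{-1}\right|\ll\sum_{n\le N} n|a_n|^2\le N\|a\|_2^2$. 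With that repair your Gallagher step goes through exactly as you wrote it and yields the stated $(T+N)\|a\|_2^2\log N$. As written, your argument only gives $\int_0^T|D|^2\,dt\ll(T+N\log N)\|a\|_2^2$, and feeding that through Gallagher produces $(T+N)(\log N)^2\|a\|_2^2$, one power of $\log N$ more than the lemma claims. That extra power is harmless for every use of the lemma in this paper (it is absorbed into $N^{o(1)}$), but it does not establish the lemma as stated.
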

As a consequence of the above, we have.
\begin{lemma}
\label{lem:classicalmoments}
For any set $\cA\subseteq [0,T]$ of well spaced points, integers $N,k$ and sequence of complex numbers $a_n$ satisfying $|a_n|\le 1$ we have 
$$\sum_{t\in \cA}\left|\sum_{1\le n \le N}a_n n^{-1/2+it}\right|^{2k}\ll (T+N^k)N^{o(1)}.$$
\end{lemma}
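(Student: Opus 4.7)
The plan is to expand the $2k$-th power as the squared modulus of a $k$-fold convolution and apply Lemma~\ref{lem:classicalmv} to the resulting Dirichlet polynomial of length $N^k$.

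First I would write
\begin{align*}
\left|\sum_{1\le n \le N}a_n n^{-1/2+it}\right|^{2k}=\left|\left(\sum_{1\le n \le N}a_n n^{-1/2+it}\right)^{k}\right|^{2}=\left|\sum_{1\le m \le N^k}b_m m^{it}\right|^{2},
\end{align*}
where, after grouping terms according to the product $n_1\cdots n_k=m$,
\begin{align*}
b_m=m^{-1/2}\sum_{\substack{n_1\cdots n_k=m\\ 1\le n_i\le N}}a_{n_1}\cdots a_{n_k}.
\end{align*}
Since $|a_n|\le 1$, we have $|b_m|\le m^{-1/2}d_k(m)$, where $d_k$ is the $k$-fold divisor function.

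Next I would bound the $\ell_2$ norm of $(b_m)$. Using the standard estimate $\sum_{m\le M}d_k(m)^2/m\ll (\log M)^{k^2}$, we obtain
\begin{align*}
\|b\|_2^2=\sum_{1\le m\le N^k}|b_m|^2\le \sum_{1\le m\le N^k}\frac{d_k(m)^2}{m}\ll (\log N^k)^{k^2}\ll N^{o(1)},
\end{align*}
since $k$ is fixed and we are under the standing assumption $T^{o(1)}\le N\le T^{O(1)}$.

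Finally I would apply Lemma~\ref{lem:classicalmv} to the polynomial $\sum_{m\le N^k}b_m m^{it}$ of length $N^k$ on the well spaced set $\cA\subseteq [0,T]$, yielding
\begin{align*}
\sum_{t\in\cA}\left|\sum_{1\le m\le N^k}b_m m^{it}\right|^{2}\ll (T+N^k)\|b\|_2^2\log(N^k)\ll (T+N^k)N^{o(1)},
\end{align*}
which is the claimed bound. There is no real obstacle here; the only small point requiring care is the $\ell_2$ bookkeeping for $b_m$, which works cleanly because the $m^{-1/2}$ factor produced by pulling $(n_1\cdots n_k)^{-1/2}=m^{-1/2}$ out of the sum exactly compensates the growth of $d_k(m)^2$ in the Dirichlet series sense.
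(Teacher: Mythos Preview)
Your proof is correct and is exactly the intended derivation: the paper states this lemma as an immediate consequence of Lemma~\ref{lem:classicalmv} without writing out details, and your expansion of the $k$-th power into a Dirichlet polynomial of length $N^k$ with coefficients bounded by $m^{-1/2}d_k(m)$, followed by the divisor-sum bound for $\|b\|_2^2$, is precisely the standard way to unpack that implication.
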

For a proof of the following, see~\cite[Theorem~8.4]{Ivic}.
\begin{theorem}
\label{lem:8thmoment}
For $T\gg 1$ we have 
\begin{align*}
\int_{0}^{T}\left|\zeta\left(\frac{5}{8}+it\right)\right|^{8}dt\ll T^{1+o(1)}.
\end{align*}
\end{theorem}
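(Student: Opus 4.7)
The plan is to deduce this bound from classical moment estimates for $\zeta$ on the critical line combined with the approximate functional equation. The two main inputs are Ingham's fourth moment $\int_0^T |\zeta(1/2+it)|^4\,dt \ll T(\log T)^4$ and Heath-Brown's twelfth moment $\int_0^T |\zeta(1/2+it)|^{12}\,dt \ll T^{2+o(1)}$.

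First I would apply the approximate functional equation to $\zeta(s)^4$ in order to write, for $t \asymp T$,
\begin{equation*}
\zeta\!\left(\tfrac{5}{8}+it\right)^{4} \approx \sum_{n \le T^{2}} \frac{d_{4}(n)}{n^{5/8+it}} + \text{dual terms},
\end{equation*}
where $d_{4}$ is the four-fold divisor function; standard smoothing allows us to replace the sharp cutoff by a smooth one. Squaring and integrating, the original integral is controlled by the mean square of this Dirichlet polynomial. Decomposing dyadically into ranges $M \le n \le 2M$ with $1 \le M \le T^{2}$, the mean value theorem (Lemma~\ref{lem:classicalmv}) combined with the standard bound $\sum_{n \le M} d_{4}(n)^{2} \ll M(\log M)^{15}$ yields
\begin{equation*}
\int_0^T \bigg|\sum_{M \le n \le 2M}\frac{d_{4}(n)}{n^{5/8+it}}\bigg|^{2} dt \ll (T+M)\, M^{-1/4+o(1)},
\end{equation*}
which is already $\ll T^{1+o(1)}$ for $M \le T$.

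The genuine difficulty is the long range $T \le M \le T^{2}$, where the mean value theorem alone produces a wasteful $T^{3/2+o(1)}$ bound. To handle this I would factor $d_{4} = d \ast d$ and write the Dirichlet polynomial as a bilinear form in two divisor coefficients, then exploit a Heath-Brown convolution estimate in the spirit of Theorem~\ref{thm:heathbrown} with a carefully chosen variable split, using the twelfth moment bound to control the large values of the inner sum. Equivalently, one may invoke Heath-Brown's twelfth moment directly and interpolate with the pointwise bound near $\sigma = 1$ via a Phragmén--Lindel\"of-type convexity argument applied to $\zeta(s)^{4}$ on a vertical strip.

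The main obstacle, as indicated, is precisely this long range: one has to beat the trivial $(T+M)$ factor by a power of $T$ using the multiplicative structure of $d_{4}$. Once both the short range $M \le T$ and the long range $T \le M \le T^{2}$ are each controlled by $T^{1+o(1)}$, summing over the $O(\log T)$ dyadic scales absorbs into the $T^{o(1)}$ factor and yields the claimed eighth-moment bound on the line $\sigma = 5/8$.
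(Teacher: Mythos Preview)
The paper does not prove this statement at all; it simply cites \cite[Theorem~8.4]{Ivic}. So there is no in-paper argument to compare against. That said, your sketch has a genuine gap that prevents it from being a proof.

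You correctly identify the long range $T\le M\le T^2$ as the crux, but neither of your two suggested fixes closes it. The convexity route fails quantitatively: from H\"older between the fourth and twelfth moments one only knows $\int_0^T|\zeta(1/2+it)|^8\,dt\ll T^{3/2+o(1)}$, and interpolating this with the trivial bound $\int_0^T|\zeta(1+it)|^8\,dt\ll T^{1+o(1)}$ via Gabriel's convexity theorem at $\sigma=5/8$ gives only $(T^{3/2})^{3/4}(T)^{1/4}=T^{11/8+o(1)}$, not $T^{1+o(1)}$. The other suggestion --- factor $d_4=d*d$ and invoke Theorem~\ref{thm:heathbrown} plus the twelfth moment --- is not a workable plan as stated: Theorem~\ref{thm:heathbrown} bounds a double sum over a \emph{fixed well-spaced set} $\cA$, not an $L^2$-mean over $[0,T]$, and the ``inner sum'' $\sum_{b\sim B}d(b)b^{-5/8-it}$ is a Dirichlet polynomial with divisor coefficients, not $\zeta$ itself, so the twelfth moment of $\zeta$ does not directly control its large values.

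The argument in Ivi\'c proceeds differently and avoids ever producing a polynomial of length $T^2$. One approximates $\zeta(5/8+it)$ by a sum of length $N\ll T^{1/2}$, partitions $[0,T]$ into level sets $\{t:|\zeta(5/8+it)|\sim V\}$, and bounds the measure of each level set by a large-values estimate obtained after raising the dyadic piece $\sum_{n\sim M}n^{-5/8-it}$ to a power $k$ chosen so that $M^k$ is close to $T$; the Hal\'asz--Montgomery inequality then combines with the fourth moment of $\zeta$ on the critical line to give $R(V)\ll TV^{-8}T^{o(1)}$, which yields the eighth moment bound. If you want to repair your approach, the essential missing idea is to work with polynomials of length $\le T^{1/2}$ (one factor of $\zeta$, not $\zeta^4$) and raise to powers adapted to the dyadic scale, rather than trying to handle a length-$T^2$ polynomial by ad hoc splitting.
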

For a proof of the following, see~\cite[Lemma~4.48]{Bou}.
\begin{lemma}
\label{lem:removemax}
For any $N\ge 1, t\in \R$  and sequence of complex numbers $a_n$ satisfying $|a_n|\le 1$ we have 
\begin{align*}
\sum_{N\le n\le 2N}a_n n^{it}\ll \log{N}\int_{|\tau|\le \log{N}}\left|\sum_{N\le n \le 2N}a_n n^{i(t+\tau)}\right|d\tau.
\end{align*}
\end{lemma}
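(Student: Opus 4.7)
The plan is to use Fourier-analytic smoothing to express $f(t):=\sum_{N\le n\le 2N}a_n n^{it}$ as an integral of $f(t+\tau)$ against a Fourier kernel, then split the integral into the main window $|\tau|\le\log N$ and a tail. First I would fix a test function $\phi\in C_c^\infty(\R)$ with $\phi(x)=1$ for $x\in[\log N,\log 2N]$ and support of length $O(1)$. Since $a_n$ vanishes outside $[N,2N]$,
\begin{equation*}
f(t)=\sum_n a_n\phi(\log n)\,n^{it}=\frac{1}{2\pi}\int_\R\hat\phi(\tau)f(t+\tau)\,d\tau,
\end{equation*}
where the second equality follows from Fourier inversion $\phi(\log n)=(2\pi)^{-1}\int\hat\phi(\tau)n^{i\tau}d\tau$ and an exchange of sum and integral. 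Taking absolute values yields $|f(t)|\ll\int_\R|\hat\phi(\tau)||f(t+\tau)|d\tau$.

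Next I would split this integral at $|\tau|=\log N$. The inner piece contributes $\ll\|\hat\phi\|_\infty\int_{|\tau|\le\log N}|f(t+\tau)|d\tau\ll\int_{|\tau|\le\log N}|f(t+\tau)|d\tau$, since $\|\hat\phi\|_\infty\le\|\phi\|_1=O(1)$; this is already stronger than the asserted bound by a factor of $\log N$. For the tail, smoothness of $\phi$ gives $|\hat\phi(\tau)|\ll_K|\tau|^{-K}$ for every $K\ge 1$, so combined with the trivial estimate $|f(t+\tau)|\le N$ the tail contributes at most $O\bigl(N(\log N)^{-(K-1)}\bigr)$.

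The main obstacle is absorbing this tail into the main integral, and the $\log N$ factor in the statement provides exactly the slack needed. To make the absorption explicit I would invoke a Bernstein-type inequality on $g(\tau):=f(t+\tau)e^{-i\tau\log(\sqrt2\,N)}$, which has frequencies in $[-\tfrac12\log 2,\tfrac12\log 2]$; this gives $|g(\tau)-g(0)|\ll N|\tau|$, so $|f(t+\tau)|\ge\tfrac12|f(t)|$ on the interval $|\tau|\ll|f(t)|/N$, and hence $\int_{|\tau|\le\log N}|f(t+\tau)|d\tau\gg|f(t)|^2/N$. Choosing $K$ large enough renders the tail negligible compared with $\log N$ times this lower bound, and the remaining regime where $|f(t)|$ is sub-polynomial in $N$ is handled by the trivial bound, completing the proof.
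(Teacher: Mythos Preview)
The paper does not supply its own proof here; it simply cites \cite[Lemma~4.48]{Bou}, so there is no paper argument to compare against and I assess your proposal directly. Your Fourier-analytic setup and the splitting into the window $|\tau|\le\log N$ and a tail are standard and correct; in fact the inner piece already gives $|f(t)|\ll\int_{|\tau|\le\log N}|f(t+\tau)|\,d\tau+(\text{tail})$ with no $\log N$ loss. The gap is in absorbing the tail. Your Bernstein step yields $\int_{|\tau|\le\log N}|f(t+\tau)|\,d\tau\gg|f(t)|^{2}/N$, and comparing this to the tail $O_{K}\bigl(N(\log N)^{1-K}\bigr)$ only succeeds when $|f(t)|\gg N(\log N)^{-K/2}$. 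The ``remaining regime'' you dismiss is not sub-polynomial in $N$ at all; it is $|f(t)|\asymp N/\mathrm{polylog}(N)$, and there the Bernstein lower bound is too weak by a factor of roughly $N/|f(t)|$. Concretely, if $|f(t)|=N/(\log N)^{10}$ and the integral sits at its Bernstein minimum $\asymp N/(\log N)^{20}$, then $\log N$ times the integral is $\asymp N/(\log N)^{19}\ll|f(t)|$. No ``trivial bound'' rescues this, because the whole difficulty is that the integral may be much smaller than $|f(t)|$.

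What your argument does establish cleanly is
\[
|f(t)|\ \ll\ \int_{|\tau|\le\log N}|f(t+\tau)|\,d\tau\;+\;O_{A}\bigl(N(\log N)^{-A}\bigr)
\]
for every fixed $A$, and that is all the paper ever needs: every invocation of the lemma occurs at points with $|f(t)|\ge V\ge N^{3/4+o(1)}$, where the additive error is negligible. If you insist on the inequality exactly as stated you need a genuinely different input, for instance a direct lower bound $\int_{|\tau|\le\log N}|f(t+\tau)|\,d\tau\gg|f(t)|$ valid uniformly for trigonometric polynomials with spectrum in an interval of length $\log 2$; this does not follow from Bernstein combined with the crude estimate $\|f\|_{\infty}\le N$.
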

The following is essentially due to Heath-Brown~\cite{HB2}. Since our statement is more general we provide details of the proof.
\begin{lemma}
\label{lem:e2energy}
Let $N,T$ be positive real numbers and $a_n$ a sequence of complex numbers satisfying $|a_n|\le 1.$ Let $\cA\subset [0,T]$ be a well spaced set satisfying
$$\left|\sum_{N\le n \le 2N}a_n n^{it} \right|\ge V, \quad t\in \cA,$$
and  for integer $\ell$ define
$$r(\ell)=|\{ (t_1,t_2)\in \cA \ : \ 0\le t_1-t_2-\ell<1\}|.$$
If $N,\cA,T$ satisfy
\begin{align}
\label{eq:e2conds}
N\ge T^{2/3}, \quad |\cA|\le N,
\end{align}
 then for any set $\cB$ we have 
\begin{align*}
\sum_{\ell\in \cB}r(\ell)^2\ll \frac{N^{3/2+o(1)}}{V^2}|\cA|^{1/2}\sum_{\ell \in \cB}r(\ell)+\frac{N^{4+o(1)}}{V^4}|\cA|.
\end{align*}
\end{lemma}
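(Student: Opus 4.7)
Write $R = \sum_{\ell\in\cB} r(\ell)$ and $S = \sum_{\ell\in\cB} r(\ell)^2$, and set $P_\ell = \{(t_1,t_2)\in\cA^2 : \lfloor t_1-t_2\rfloor = \ell\}$, so that $|P_\ell| = r(\ell)$. The plan is to follow Heath-Brown's proof of the corresponding unweighted bound on the additive energy $E(\cA)$ from~\cite{HB2}, carrying the restriction $\ell\in\cB$ as a nonnegative weight throughout the Cauchy-Schwarz steps.

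First I would apply the large values hypothesis twice. For $t\in\cA$ choose a unit-modulus $c(t)$ with $c(t)\Phi(t) = |\Phi(t)| \ge V$, where $\Phi(t) = \sum_n a_n n^{it}$. For each pair $(t_1,t_2)\in P_\ell$ this gives $V^2 \le c(t_1)\overline{c(t_2)}\Phi(t_1)\overline{\Phi(t_2)}$. Summing over $P_\ell$, substituting for $\Phi$, and applying Cauchy-Schwarz in the $(m,n)$ variables (using $\|a\|_2^2 \le N$) yields
$$V^4 r(\ell)^2 \ll N^2 \sum_{m,n\in[N,2N]} \Bigl|\sum_{(t_1,t_2)\in P_\ell} c(t_1)\overline{c(t_2)}\,m^{it_1} n^{-it_2}\Bigr|^2.$$
Expanding the square produces a sum over $(t_1,t_2),(t_3,t_4)\in P_\ell$ of products $\sum_m m^{i(t_1-t_3)}\cdot\sum_n n^{i(t_4-t_2)}$; since $(t_1,t_2),(t_3,t_4)\in P_\ell$ forces $|(t_1-t_3)-(t_4-t_2)|<1$, the two character sums are essentially equal in modulus.

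Summing over $\ell\in\cB$, interchanging summation, and reorganising by $(t_1,t_3)\in\cA\times\cA$ (using that $1$-spacing makes $(t_2,t_4)$ unique once $(t_1,t_3,\ell)$ are fixed), one splits the diagonal contribution $t_1=t_3$ (which forces $t_2=t_4$ and contributes a term absorbed into the $V^{-4}N^{4+o(1)}|\cA|$ part of the final bound) from the off-diagonal. After a further Cauchy-Schwarz to merge the two character sums, the off-diagonal fits into the shape
$$V^4 S \ll N^{4+o(1)}|\cA| + N^2 \sum_{t_1,t_3\in\cA}\omega(t_1,t_3)\Bigl|\sum_{N\le m\le 2N} m^{i(t_1-t_3)}\Bigr|^2,$$
for a nonnegative weight $\omega$ with $\omega\le 1$ pointwise and $\sum\omega\ll R$. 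I would then bound the inner mean value by Theorem~\ref{thm:heathbrown} applied with the trivial coefficients $a_n\equiv 1$ on $[N,2N]$ (after adjusting the $n^{-1/2+it}$ normalisation to $n^{it}$), giving $\ll N(|\cA|^2+N|\cA|+T^{1/2}|\cA|^{5/4})(NT)^{o(1)}$. Under the hypotheses $N\ge T^{2/3}$ and $|\cA|\le N$, the first two terms are absorbed into the $V^{-4}N^{4+o(1)}|\cA|$ contribution, while the third, via $T^{1/2}\le N^{3/4}$ together with a final Cauchy-Schwarz distributing $|\cA|^{5/4}=|\cA|^{1/2}\cdot|\cA|^{3/4}$ against the weight $\omega$ (and against $\sum\omega\ll R$), yields the first term $V^{-2}N^{3/2+o(1)}|\cA|^{1/2}R$.

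The main obstacle will be the bookkeeping of $\omega$: it must be constructed so that the Heath-Brown factor $|\cA|^{5/4}$ pairs with $R$ and $V^{-2}$ in the precise combination $V^{-2}|\cA|^{1/2}R$, rather than the weaker $V^{-2}|\cA|R^{1/2}$ or $V^{-2}|\cA|^{3/4}R$. This I would accomplish via a dyadic decomposition of $\cB$ into levels on which $r(\ell)$ is essentially constant, so that $\omega$ is approximately constant on each level, using the crude bound $r(\ell)\le|\cA|$ to absorb the surplus $|\cA|^{3/4}$ and summing the dyadic contributions geometrically. The $1$-spacing of $\cA$ is crucial for controlling the near-diagonal terms, for applying Heath-Brown's theorem on subsets $S_\ell\subseteq\cA$ of size $r(\ell)$ when needed, and for ensuring that the character sum $\sum_{N\le m\le 2N}m^{is}$ decays like $N/(1+|s|)$ off-diagonal.
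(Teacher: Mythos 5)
Your approach is genuinely different from the paper's, and as sketched it does not close. The paper does not square the large-value condition. It dyadically decomposes $\cB$ into a level set $D$ with $r(\ell)\asymp\Delta$, then considers the bilinear quantity
\begin{align*}
S=\int_{|\tau|\le 2\log N}\sum_{\ell\in D,\,t\in\cA}\Bigl|\sum_{N\le n\le 2N}a_n n^{-i(\ell+t+\tau)}\Bigr|^2\,d\tau,
\end{align*}
lower-bounds it by $V^2\Delta|D|N^{-o(1)}$ (each pair $(\ell,t)$ with $\ell+t$ within $2$ of some $t'\in\cA$ contributes $\gg V^2/\log N$ via Lemma~\ref{lem:removemax}, and there are $\gg\Delta|D|$ such pairs), and then upper-bounds $S$ by expanding, applying Cauchy--Schwarz in the $n$-variables, and obtaining $S^2\ll N^{2+o(1)}S_1S_2$ where $S_1$ is a mean value over $D\times D$ and $S_2$ a mean value over $\cA\times\cA$. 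The decisive point is applying Theorem~\ref{thm:heathbrown} \emph{to the set $D$ of $\ell$-values}, giving $S_1\ll(|D|^2+N|D|)N^{o(1)}$. This $D$-mean-value is what produces the two-case dichotomy and hence the term $V^{-2}N^{3/2}|\cA|^{1/2}R$. Your proposal uses the large-value hypothesis twice at the start, producing $V^4 r(\ell)^2$ and a quadrilinear sum over $\cA^4$; the $\ell$-sum is absorbed into a weight, and Heath-Brown is only ever applied over $\cA$. This structural choice loses exactly the $D$-mean-value information the paper relies on.

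Concretely, the claimed intermediate bound $V^4 S\ll N^{4+o(1)}|\cA|+N^2\sum\omega(t_1,t_3)|\sum_m m^{i(t_1-t_3)}|^2$ with $\omega\le 1$ pointwise and $\sum\omega\ll R$ is not obtainable by what you describe. After reorganising by $(t_1,t_3)$, the natural weight is $\nu(t_1,t_3)=\#\{(\ell,t_2,t_4):\ell\in\cB,\ (t_1,t_2),(t_3,t_4)\in P_\ell\}$, which is neither bounded by $1$ nor has $\sum\nu\ll R$; in fact $\sum_{t_1,t_3}\nu=\sum_{\ell\in\cB}r(\ell)^2=S$, which is circular. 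Even granting an optimistic weight with $\omega\le 1$, feeding the $T^{1/2}|\cA|^{5/4}$ term of Theorem~\ref{thm:heathbrown} into $V^4S$ yields $N^3T^{1/2}|\cA|^{5/4}\le N^{15/4}|\cA|^{5/4}$, and matching the target $V^2N^{3/2}|\cA|^{1/2}R$ would require $N^{9/4}|\cA|^{3/4}\ll V^2 R$, which fails whenever $R$ is close to its minimum $\asymp|\cA|$. A final Cauchy--Schwarz against $\sum\omega\ll R$ only introduces a fourth moment of the Dirichlet polynomial and does not repair this. Note also that the lemma carries no lower bound on $V$, so the option of discarding terms via $V\ge N^{3/4+o(1)}$ is not available here.

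Two secondary issues. The condition forced by $(t_1,t_2),(t_3,t_4)\in P_\ell$ is $|(t_1-t_3)+(t_4-t_2)|<1$ (you wrote a minus sign); the conclusion about moduli is unaffected, but the step ``the two character sums are essentially equal in modulus'' is not automatic: $\sum_{N\le m\le 2N}m^{is}$ can vary by an amount comparable to its size over a shift $|s'-s|<1$, and the paper handles exactly this type of issue with the $\tau$-smoothing of Lemma~\ref{lem:removemax} (and, elsewhere, Lemmas~\ref{lem:coefficients} and~\ref{lem:smoothsums}). Your proposal does not invoke any such device. If you wish to salvage the expansion-of-$r(\ell)^2$ route, the missing ingredient is a way to isolate a mean value over the set of $\ell$-values and estimate it with Heath-Brown, as the paper does with $S_1$; without that, the exponents do not assemble into $V^{-2}N^{3/2}|\cA|^{1/2}R$.
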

\begin{proof}
Let 
$$W=\sum_{\ell\in \cB}r(\ell)^2,$$
and for integer $j\ll \log{T}$ define
$$Y_j=\{ \ell\in \cB \ : \  2^{j}\le r(\ell)<2^{j+1}\},$$
so that 
$$\sum_{|j|\ll \log{T}}2^{2j}|Y_j|\ll E(\cA)\ll \sum_{j\ll \log{T}}2^{2j}|Y_j|.$$
By the pigeonhole principle, there exists some $j_0$ such that defining
\begin{align}
\label{eq:deltaDdefdef}
D=Y_{j_0}, \quad 2^{j_0}=\Delta,
\end{align}
we have 
\begin{align}
\label{eq:EcaUL}
|D|\Delta^2\ll W\ll (\log{T})|D|\Delta^2.
\end{align}
Consider the sum
\begin{align*}
S=\int_{|\tau|\le 2\log{N}}\sum_{\substack{\ell\in D, t\in \cA \\ }}\left|\sum_{N\le n \le 2N}a_n n^{-i(\ell+t+\tau)}\right|^2d\tau.
\end{align*} 
Define the set $\cB_0$ by 
$$\cB_0=\{ (\ell,t)\in D\times \cA \ : \exists \ t'\in \cA \ \text{such that} \ |t'-t-\ell|\le 2\},$$
so that 
\begin{align}
\label{eq:SubB}
S\ge \sum_{\substack{(\ell,t)\in \cB_0 }}\int_{|\tau|\le 2\log{N}}\left|\sum_{N\le n \le 2N}a_n n^{i(\ell+t+\tau)}\right|^2d\tau.
\end{align}
If $(\ell,t)\in \cB_0$ then there exists some $t'\in \cA$ and some $|\theta|\le 2$ such that 
$$t'+\theta=\ell+t,$$
which gives 
\begin{align*}
\int_{|\tau|\le 2\log{N}}\left|\sum_{N\le n \le 2N}a_n n^{i(\ell+t+\tau)}\right|^2d\tau=\int_{|\tau|\le 2\log{N}}\left|\sum_{N\le n \le 2N}a_n n^{i(t'+\theta+\tau)}\right|^2d\tau.
\end{align*}
Since $|\theta|\le 2$ we have 
$$(\theta+[-2\log{N},2\log{N}])\cap [-\log{N},\log{N}]=[-\log{N},\log{N}],$$
and hence 
\begin{align*}
\int_{|\tau|\le 2\log{N}}\left|\sum_{N\le n \le 2N}a_n n^{i(\ell+t+\tau)}\right|^2d\tau\ge \int_{|\tau|\le \log{N}}\left|\sum_{N\le n \le 2N}a_n n^{i(t'+\tau)}\right|^2d\tau.
\end{align*}
Applying Lemma~\ref{lem:removemax}
\begin{align*}
\int_{|\tau|\le 2\log{N}}\left|\sum_{N\le n \le 2N}a_n n^{i(\ell+t+\tau)}\right|^2d\tau\gg \frac{V^2}{\log{N}},
\end{align*}
which implies
\begin{align*}
N^{o(1)}S\gg V^2|\cB_0|.
\end{align*}
Recalling~\eqref{eq:deltaDdefdef} and the definition of $Y_j$
\begin{align*}
\Delta |D|\ll \sum_{\ell \in D}r(\ell)\ll |\{ (\ell,t,t')\in D \times \cA \times \cA \ : \ 0<t'-t-\ell\le 1\}|\le |\cB_0|,
\end{align*}
so that
\begin{align}
\label{eq:SdeltaDLB}
\Delta |D|\ll \frac{N^{o(1)}}{V^2}S.
\end{align}
Taking a maximum over $\tau$ in $S$, there exists some sequence of complex numbers $b(n)$ satisfying $|b(n)|\le 1$ such that 
\begin{align*}
S&\ll N^{o(1)}\sum_{\ell\in D, t\in \cA}\left|\sum_{N\le n \le 2N}b(n)n^{i(\ell+t)}\right|^2 \\ 
&\le  \sum_{N\le n_1,n_2\le 2N}\left|\sum_{\ell\in D}\left(\frac{n_1}{n_2}\right)^{i\ell} \right|\left|\sum_{t\in \cA}\left(\frac{n_1}{n_2}\right)^{it}\right|.
\end{align*}
By the Cauchy-Schwarz inequality
\begin{align}
\label{eq:SS1S2}
S^2\ll N^{2+o(1)}S_1S_2,
\end{align}
where 
\begin{align*}
S_1=\sum_{N\le n_1,n_2\le 2N}(n_1n_2)^{-1/2}\left|\sum_{\ell\in D}\left(\frac{n_1}{n_2}\right)^{i\ell} \right|^2,
\end{align*}
and 
\begin{align*}
S_2=\sum_{N\le n_1,n_2\le 2N}(n_1n_2)^{-1/2}\left|\sum_{t\in \cA}\left(\frac{n_1}{n_2}\right)^{it} \right|^2.
\end{align*}
Interchanging summation, we have 
\begin{align*}
S_1\ll \sum_{\ell_1,\ell_2\in D}\left|\sum_{N\le n \le 2N}n^{-1/2+i(\ell_1-\ell_2)}\right|^2.
\end{align*}
and hence by Theorem~\ref{thm:heathbrown} and~\eqref{eq:e2conds}  
\begin{align*}
S_1\ll (|D|^2+N|D|)N^{o(1)}.
\end{align*}
By a similar argument and the assumption $|\cA|\le N$
\begin{align*}
S_2\ll N^{1+o(1)}|\cA|.
\end{align*}
The above estimates combine to give 
\begin{align*}
S\ll N^{3/2+o(1)}|\cA|\left(|D|+N^{1/2}|D|^{1/2}\right),
\end{align*}
and hence 
\begin{align*}
\Delta |D|\ll \frac{N^{3/2+o(1)}}{V^2}|\cA|^{1/2}\left(|D|+N^{1/2}|D|^{1/2}\right).
\end{align*}
This implies either 
\begin{align*}
\Delta \ll \frac{N^{3/2+o(1)}}{V^2}|\cA|^{1/2},
\end{align*}
or 
\begin{align*}
\Delta^2|D|\ll \frac{N^{4+o(1)}}{V^4}|\cA|,
\end{align*}
and the result follows from~\eqref{eq:EcaUL} and the estimate 
\begin{align*}
\Delta |D|\ll \sum_{\ell \in \cB}r(\ell).
\end{align*}
\end{proof}
The following is due to Huxley~\cite{Hux}.
\begin{lemma}
\label{lem:hux}
Suppose $N,T,V$ are positive real numbers and $a_n$ a sequence of complex numbers satisfying $|a_n|\le 1$. Let  $\cA\subset[0,T]$ be a well spaced set satisfying 
\begin{align*}
\left|\sum_{N\le n\le 2N}a_n n^{it} \right|\ge V, \quad t\in \cA.
\end{align*}
If $V\ge N^{3/4+o(1)}$ then
\begin{align*}
|\cA|&\ll \frac{N^{2+o(1)}}{V^2}+\frac{TN^{4+o(1)}}{V^6}.
\end{align*}
\end{lemma}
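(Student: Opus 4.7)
The plan is to prove the lemma by the classical Hal\'asz--Montgomery large-values argument, coupled with the fourth moment of the Riemann zeta function on the critical line. Starting from $V\le|A(t)|$ for $t\in\cA$, where $A(t):=\sum_{N\le n\le 2N}a_n n^{it}$, I would choose unimodular weights $c_t$ so that $V|\cA|\le\sum_{t\in\cA}c_t\overline{A(t)}$, swap the order of summation, and apply Cauchy--Schwarz in $n$ (using $|a_n|\le 1$) to arrive at the workhorse inequality
\[
V^2|\cA|^2\ll N\sum_{t_1,t_2\in\cA}|S(t_1-t_2)|,\qquad S(u):=\sum_{N\le n\le 2N}n^{iu}.
\]
The diagonal $t_1=t_2$ contributes $N^2|\cA|$ and yields the first term $|\cA|\ll N^{2+o(1)}/V^2$ of the bound.

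For the off-diagonal I would replace the sharp cutoff on $[N,2N]$ by a smooth bump $\phi(n/N)$ concentrated near $[1,2]$, Mellin-invert the resulting sum against $\zeta(s-iu)$, and shift the contour down to $\operatorname{Re}(s)=1/2$, collecting the pole of $\zeta$ at $s=1+iu$. Using the rapid decay of $\tilde\phi(1+iu)$, the residue is absorbed into the diagonal and into an $N^{o(1)}$ factor; the remaining integral on the critical line, together with Lemma~\ref{lem:removemax} to remove a max over a window of width $\log N$, gives a pointwise bound of the form
\[
|S(u)|\ll N^{1/2+o(1)}\,\bigl|\zeta(\tfrac{1}{2}+iu)\bigr|_{*}\qquad\text{for $|u|$ away from zero,}
\]
where $|\cdot|_{*}$ denotes a local maximum. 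This turns the off-diagonal into a weighted $\ell^1$ sum $\sum_\delta r(\delta)\,|\zeta(\tfrac{1}{2}+i\delta)|$ with $r(\delta)=|\{(t_1,t_2)\in\cA^2:t_1-t_2=\delta\}|$.

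I would then apply Cauchy--Schwarz twice to bound this weighted sum: first peeling off a factor of $r(\delta)^{1/2}$ via $\sum_\delta r(\delta)=|\cA|^2$ to get $\sum_\delta r(\delta)|\zeta|\le|\cA|(\sum_\delta r(\delta)|\zeta|^2)^{1/2}$, and then splitting $\sum r(\delta)|\zeta|^2\le(\sum r(\delta)^2)^{1/2}(\sum_{\text{distinct }\delta}|\zeta|^4)^{1/2}$ to invoke the trivial additive-energy bound $\sum r(\delta)^2\le|\cA|^3$ against the classical fourth-moment estimate $\int_0^T|\zeta(\tfrac{1}{2}+it)|^4\,dt\ll T^{1+o(1)}$ (the required discrete version over a well-spaced subset follows from Lemma~\ref{lem:classicalmv} applied to a short Dirichlet approximation of $\zeta^2$). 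Substituting back produces an inequality of the shape $V^2|\cA|^2\ll N^2|\cA|+N^{3/2+o(1)}|\cA|^{7/4}T^{1/4}$, which, together with the assumption $V\ge N^{3/4+o(1)}$ and some casework on which term dominates, yields the second term $TN^{4+o(1)}/V^6$.

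The main obstacle is the Mellin step: the pole of $\zeta$ at $s=1+iu$ is non-negligible for small $|u|$ and must be either absorbed in the diagonal or killed via the rapid decay of $\tilde\phi$, while the boundary errors introduced by smoothing the sharp cutoff must stay within the $N^{o(1)}$ budget. A secondary delicate point is that the difference set $\cA-\cA$ is not itself well-spaced, so the fourth-moment input for $\zeta$ has to be applied after fixing the outer variable $t_2$ and treating $\{t_1-t_2:t_1\in\cA\}$ as a $1$-spaced subset of $[-T,T]$, and the balancing of Cauchy--Schwarz exponents must be arranged carefully so as not to lose powers of $N$ or $V$ in the second term.
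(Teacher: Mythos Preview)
The paper does not prove this lemma; it simply attributes it to Huxley~\cite{Hux}. So there is no ``paper's own proof'' to match, only the question of whether your argument actually establishes the stated bound.

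It does not. Your chain of Cauchy--Schwarz applications is correct and does produce
\[
V^{2}|\cA|^{2}\ll N^{2}|\cA|+N^{3/2+o(1)}|\cA|^{7/4}T^{1/4},
\]
but solving this inequality gives $|\cA|\ll N^{2+o(1)}/V^{2}+N^{6+o(1)}T/V^{8}$, \emph{not} $N^{4+o(1)}T/V^{6}$. Indeed, if the second term on the right dominates, then $V^{2}|\cA|^{1/4}\ll N^{3/2+o(1)}T^{1/4}$, hence $|\cA|\ll N^{6+o(1)}T/V^{8}$. In the nontrivial range $N^{3/4}\le V<N$ one has $N^{6}T/V^{8}=(N^{4}T/V^{6})\cdot(N^{2}/V^{2})>N^{4}T/V^{6}$, so your bound is strictly weaker than the one claimed. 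The hypothesis $V\ge N^{3/4+o(1)}$ is used in the argument only to absorb the $O(N^{1/2})$ error in $|S(u)|\ll N^{1/2}|\zeta(1/2+iu)|$ into the left-hand side; it does not convert $N^{6}T/V^{8}$ into $N^{4}T/V^{6}$.

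The loss is intrinsic to your scheme: given only $\sum_{\delta}r(\delta)=|\cA|^{2}$, $r(\delta)\le|\cA|$, and the fourth-moment input $\sum_{\delta}|\zeta(1/2+i\delta)|^{4}\ll T$, the bound $\sum_{\delta}r(\delta)|\zeta|\ll|\cA|^{7/4}T^{1/4}$ is essentially sharp, so no rearrangement of H\"older will do better. Huxley's proof does not proceed by the global fourth moment of $\zeta$ together with the trivial energy bound $E(\cA)\le|\cA|^{3}$. What is needed is either to work with the fourth moment of the \emph{length-$N$} partial sum $B(u)=\sum_{N\le n\le 2N}n^{iu}$ directly via the mean value theorem for $B^{2}$ (which injects the parameter $N$ into the $T$-dependent term), combined with a subdivision/iteration that balances against the simple mean value for $A^{2}$, or equivalently to use the reflection of $B$ to a dual sum of length $\asymp|u|/N$ before taking moments. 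Either route yields the exponent pair $(4,6)$ in $TN^{4}/V^{6}$ rather than the $(6,8)$ your argument produces.
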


\section{Large values over additive convolutions}
\label{sec:convolutions}
Given real numbers $N,\Delta$, a well spaced sequence $t_1,\dots,t_R$ and a positive real numbers $\gamma(t)$ we define
\begin{align}
\label{eq:SNDdef}
S(N,\Delta,\gamma)=\sum_{\substack{r,s=1 \\ |t_r-t_s|\le \Delta}}^{R}\gamma(t_r)\gamma(t_s)\left|\sum_{N\le n \le 2N}n^{i(t_r-t_s)}\right|^2,
\end{align}
\begin{align}
\label{eq:IDgdef}
I(\Delta,\gamma)=\sum_{\substack{r,s=1 \\ |t_r-t_s|\le \Delta}}^{R}\gamma(t_r)\gamma(t_s),\end{align}
and when $\gamma$ is the indicator function of some set $\cA$ let $I(\Delta,\cA)$ be as in~\eqref{eq:IdAset}. If each $t_i\le T$ then we simplify 
$$S(N,T,\gamma)=S(N,\gamma).$$
\begin{theorem}
\label{thm:largeadditive}
Let $T$ and $N$ be  real numbers, $1\le t_1,\dots,t_R\le T$ a well spaced sequence, $a_n$ a sequence of complex numbers satisfying
\begin{align*}
|a_n|\le 1,
\end{align*}
and $\gamma$ a sequence of positive real numbers. If $N\ge \Delta^{2/3}$ then we have 
\begin{align*}
S(N,\Delta,\gamma)\ll (I(\Delta,\gamma)+N\|\gamma\|_2^2)N^{o(1)},
\end{align*}
and in general
\begin{align*}
S(N,\Delta,\gamma)\ll (I(\Delta,\gamma)+N\|\gamma\|_2^2+\Delta^{1/2}I(\Delta,\gamma)^{1/4}\|\gamma\|_2^{3/2})N^{o(1)}.
\end{align*}
\end{theorem}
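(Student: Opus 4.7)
The plan is to adapt the proof of Heath-Brown's Theorem~\ref{thm:heathbrown} to accommodate the positive weight $\gamma$ and the localisation $|t_r-t_s|\le\Delta$. After expanding $|\sum_n n^{i(t_r-t_s)}|^2$ and switching the order of summation, we obtain a representation of the form
\[
S(N,\Delta,\gamma) = \sum_{n_1,n_2\in[N,2N]}\frac{1}{\sqrt{n_1 n_2}}\,\Phi_\Delta(n_1/n_2),\qquad \Phi_\Delta(\alpha)=\sum_{|t_r-t_s|\le\Delta}\gamma(t_r)\gamma(t_s)\alpha^{i(t_r-t_s)},
\]
up to the normalisation implicit in $S(N,\Delta,\gamma)$. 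The diagonal $n_1=n_2$ contributes $\Phi_\Delta(1)=I(\Delta,\gamma)$ weighted by $\sum_{n\in[N,2N]}n^{-1}\ll 1$, producing the first term in both asserted bounds.

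For the off-diagonal $n_1\ne n_2$ the sharp cutoff $\mathbf{1}_{|u|\le\Delta}$ is replaced by a Fej\'er-type majorant $\chi$ with $\hat\chi\ge 0$, giving
\[
\Phi_\Delta(\alpha)\le \int \hat\chi\!\Bigl(\eta-\tfrac{\log\alpha}{2\pi}\Bigr)|G(\eta)|^2\,d\eta,\qquad G(\eta)=\sum_r\gamma(t_r)e^{2\pi i t_r\eta}.
\]
Summing over $(n_1,n_2)$ gives $S(N,\Delta,\gamma)\ll I(\Delta,\gamma)+\int|G(\eta)|^2K(\eta)\,d\eta$ for an explicit kernel $K$ built from $\hat\chi$ and the Dirichlet polynomial $\sum_{N\le n\le 2N}n^{iu}$. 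I would bound $K$ pointwise using Euler--Maclaurin on $\sum n^{iu}$ and then apply the classical mean value estimate (Lemma~\ref{lem:classicalmv}) to $|G|^2$; under the assumption $N\ge\Delta^{2/3}$ van der Corput's second derivative test delivers a power saving on $\sum_{N\le n\le 2N}n^{iu}$ uniformly over the relevant range $|u|\le\Delta$, yielding an off-diagonal contribution $\ll N\|\gamma\|_2^2\cdot N^{o(1)}$ and finishing the first assertion.

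When $N<\Delta^{2/3}$ the second derivative test is no longer available, and the pointwise bound on $K$ must be strengthened by a divisor argument in the spirit of Heath-Brown. I would dyadically decompose the off-diagonal pairs by $|n_1-n_2|\in[H,2H]$, apply Cauchy--Schwarz in $(n_1,n_2)$ to separate the $r$ and $s$ variables in $\Phi_\Delta$, and count pairs with nearly equal ratios via the divisor bound $d(m)\ll m^{o(1)}$. A H\"older balancing between the $\ell^2$ size $\|\gamma\|_2^2$ and the localised count $I(\Delta,\gamma)$ then produces the extra term $\Delta^{1/2}I(\Delta,\gamma)^{1/4}\|\gamma\|_2^{3/2}$. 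Specialising to $\gamma=\mathbf 1_\cA$ and $\Delta=T$ recovers Heath-Brown's $T^{1/2}|\cA|^{5/4}$, confirming that this is a faithful generalisation.

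The main obstacle is this last step, namely correctly tracking the $\Delta$-dependence through the divisor argument to recover the unusual exponents $\tfrac14$ on $I(\Delta,\gamma)$ and $\tfrac32$ on $\|\gamma\|_2$. These exponents arise from the triple interplay between the Cauchy--Schwarz that separates the $r$ and $s$ variables, the divisor bound on the number of integer pairs in $[N,2N]^2$ with a prescribed difference, and the Fej\'er majorant needed to localise the frequency of $G$ to scale $\Delta$ without incurring boundary losses. Once these are handled, the passage from indicator weights $\mathbf 1_\cA$ to general positive $\gamma$ requires only cosmetic bookkeeping in the final Cauchy--Schwarz.
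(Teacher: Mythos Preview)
Your plan diverges from the paper's argument and, more importantly, is missing the mechanism that actually produces both estimates. The paper does not bound a kernel pointwise; it follows Heath-Brown's iterative scheme built from three moves on the quantity $S(N,\Delta,\gamma)$ itself: a \emph{lengthening} step $S(N,\Delta,\gamma)\ll S(M,\Delta,\gamma)$ for any $M\ge 2N$ (Corollary~\ref{cor:larger}), a \emph{reflection} step $S(N,\Delta,\gamma)\ll S(4\Delta/N,\Delta,\gamma)+\cdots$ coming from the $B$-process (Corollary~\ref{cor:reflection}), and a \emph{squaring} step $S(N,\Delta,\gamma)^2\ll I(\Delta,\gamma)\,S(M,\Delta,\gamma)$ for $M\ge 8N^2$ (Lemma~\ref{lem:square}). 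Reflecting and then squaring sends $N\mapsto \Delta^2/N^2$; when $N\ge \Delta^{2/3}$ this lands below $N$, so one lengthens back to $N$ and solves the resulting self-referential inequality to obtain the first bound. When $N<\Delta^{2/3}$ the reflected--squared length exceeds $\Delta^{2/3}$, so the first bound applies there, and the extra term $\Delta^{1/2}I(\Delta,\gamma)^{1/4}\|\gamma\|_2^{3/2}$ arises from optimising a free lengthening parameter $U$ before iterating --- not from any divisor count.

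Your direct approach runs into a concrete obstruction. Once you pass to a Fej\'er-type majorant with $\hat\chi\ge 0$, the kernel $K(\xi)$ you obtain is a \emph{nonnegative} sum over pairs $(n_1,n_2)$ of shifted copies of $\hat\chi$ at scale $1/\Delta$, and a pointwise bound gives $\|K\|_\infty\asymp \max(N,\Delta)$; there is no oscillation left for van der Corput to exploit, so in the range $\Delta^{2/3}\le N<\Delta$ you only recover $\Delta\|\gamma\|_2^2$, which is Lemma~\ref{lem:mvSmall} rather than the theorem. Likewise, the $T^{1/2}|\cA|^{5/4}$ term in Heath-Brown's original theorem is not produced by a divisor argument over $|n_1-n_2|\in[H,2H]$; the exponents $1/4$ and $3/2$ come out of exactly the reflect--square--lengthen optimisation above, specialised to $\gamma=\mathbf 1_{\cA}$ and $\Delta=T$. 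Without the lengthening lemma your scheme has no way to close.
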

As a consequence of Theorem~\ref{thm:largeadditive} we have.
\begin{theorem}
\label{thm:largeadditive1}
Let $N,T\ge 1$ be  real numbers, $\cA\subseteq [0,T]$ a well spaced set and $a_n$ a sequence of complex numbers satisfying 
$$|a_n|\le 1.$$
We have 
\begin{align*}
& \sum_{\substack{t_i\in \cA }}\left|\sum_{N\le n\le 2N}a_nn^{-1/2+i(t_1+\dots-t_{2k})} \right|^2 \ll  \\ & \quad \quad \quad (|\cA|^{2k}+NT_k(\cA)+T^{1/2}|\cA|^{k/2}T_{k}(\cA)^{3/4})N^{o(1)}.
\end{align*}
In particular, if either 
\begin{align}
\label{eq:Ndeltacond}
N\ge T^{2/3} \quad \text{or} \quad T^{2/3}T_k(\cA)\le |\cA|^{2k},
\end{align}
then we have 
\begin{align*}
&\sum_{\substack{t_i\in \cA}}\left|\sum_{N\le n\le 2N}a_nn^{-1/2+i(t_1+\dots-t_{2k})} \right|^2\ll  \left(|\cA|^{2k}+NT_k(\cA)\right)(NT)^{o(1)}.
\end{align*}
\end{theorem}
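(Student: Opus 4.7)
The plan is to deduce Theorem~\ref{thm:largeadditive1} from Theorem~\ref{thm:largeadditive} by interpreting the $2k$-fold sum over $\cA$ as a weighted pair sum, with $\gamma$ the representation function of $k$-fold sums of $\cA$. For $w \in \Z$ I set $\gamma(w) = |\{(s_1,\ldots,s_k) \in \cA^k : s_1+\cdots+s_k \in [w,w+1)\}|$, so that $\sum_w \gamma(w) = |\cA|^k$, $\|\gamma\|_2^2 \le T_k(\cA)$, and $I(\Delta,\gamma) = |\cA|^{2k}$ whenever $\Delta \ge 2kT$. I would then partition the tuples $(t_1,\ldots,t_{2k})\in \cA^{2k}$ by the pair $(u,v)$ given by the integer parts of $t_1+\cdots+t_k$ and $t_{k+1}+\cdots+t_{2k}$; each bucket has cardinality $\gamma(u)\gamma(v)$, and on it the argument $\phi := t_1+\cdots-t_{2k}$ satisfies $|\phi-(u-v)|\le 2$.

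Next I would smooth away this $O(1)$-discrepancy and strip off the coefficients $a_n$. An application of Lemma~\ref{lem:removemax} together with Cauchy--Schwarz produces
\[ \left|\sum_n a_n n^{-1/2+i\phi}\right|^2 \ll (\log N)^{O(1)} \int_{|\tau|\ll \log N} \left|\sum_n a_n n^{-1/2+i((u-v)+\tau)}\right|^2 d\tau, \]
the range of $\tau$ being widened slightly to absorb the $O(1)$-error $\phi-(u-v)$. For each fixed $\tau$ the twisted sequence $a_n n^{i\tau}$ still has sup-norm at most $1$, so the task reduces to bounding, uniformly in $|b_n|\le 1$, the quantity $\sum_{u,v\in\Z}\gamma(u)\gamma(v)|\sum_n b_n n^{-1/2+i(u-v)}|^2$. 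Expanding the square and swapping summation yields $\sum_{n,m} b_n \overline{b_m}(nm)^{-1/2}|\sum_u \gamma(u)(n/m)^{iu}|^2$; since the inner factor is non-negative, the triangle inequality combined with $|b_n \overline{b_m}|\le 1$ gives the majorant
\[ \sum_{n,m}(nm)^{-1/2}\left|\sum_u \gamma(u)(n/m)^{iu}\right|^2 = \sum_{u,v}\gamma(u)\gamma(v)\left|\sum_n n^{-1/2+i(u-v)}\right|^2, \]
which is exactly the quantity bounded by Theorem~\ref{thm:largeadditive} (in the form~\eqref{eq:SNDdef}).

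Invoking Theorem~\ref{thm:largeadditive} at $\Delta = 2kT$ then yields the desired bound $(|\cA|^{2k}+NT_k(\cA)+T^{1/2}|\cA|^{k/2}T_k(\cA)^{3/4})N^{o(1)}$. For the ``in particular'' clause: if $N \ge T^{2/3}$ then $N \gg \Delta^{2/3}$, so the stronger form of Theorem~\ref{thm:largeadditive} removes the third term; if instead $T^{2/3}T_k(\cA)\le |\cA|^{2k}$, raising to the power $3/4$ gives $T^{1/2}T_k(\cA)^{3/4}\le |\cA|^{3k/2}$, hence $T^{1/2}|\cA|^{k/2}T_k(\cA)^{3/4}\le |\cA|^{2k}$ and the third term is absorbed into the first. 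The main obstacle lies in the discretization step: Lemma~\ref{lem:removemax} must be used with enough care to absorb the boundary $O(1)$-error and keep the total loss at $N^{o(1)}$. The subsequent expand-and-triangle step that removes the coefficients $a_n$ is routine bookkeeping, hinging on the non-negativity of the quadratic form $|\sum_u\gamma(u)(n/m)^{iu}|^2$.
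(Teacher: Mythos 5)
Your proof is correct and follows essentially the same route as the paper: define the representation function $\gamma$ of $k$-fold sums of $\cA$, reduce the $2k$-fold sum to a weighted pair sum over integer frequencies via Lemma~\ref{lem:removemax}, apply Theorem~\ref{thm:largeadditive} with $\Delta\asymp kT$ (so that $I(\Delta,\gamma)\asymp|\cA|^{2k}$, $\|\gamma\|_2^2\asymp T_k(\cA)$, $\Delta^{1/2}\ll T^{1/2}$), and then observe that either condition in~\eqref{eq:Ndeltacond} kills the third term. The only cosmetic difference is that you strip the coefficients $a_n$ by a direct expand-swap-positivity argument, whereas the paper lets Theorem~\ref{thm:largeadditive} absorb arbitrary $|a_n|\le1$ through Lemma~\ref{lem:coefficients}; these amount to the same thing.
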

The following preliminary results are required for the proof of Theorem~\ref{thm:largeadditive}.
\begin{lemma}
\label{lem:coefficients}
Let $t_r$ a sequence of real numbers, $a_n$ a sequence of complex numbers, $b_n$ a sequence of positive real numbers satisfying
\begin{align}
\label{eq:anUB}
|a_n|\le b_n,
\end{align}
and  $\gamma$ a sequence of positive real numbers.
For any positive  $\Delta,N,K$  and $\varepsilon$ satisfying
\begin{align}
\label{eq:coefficientsepsilon}
\varepsilon\le \frac{1}{4}, \quad K>1,
\end{align}
we have
\begin{align*}
&\sum_{\substack{r,s=1 \\ |t_r-t_s|\le \Delta }}^{R}\gamma(t_r)\gamma(t_s)\left|\sum_{N\le n \le KN}a_n n^{-1/2+i(t_r-t_s)}\right|^2\ll \\ & \quad \quad \quad \frac{1}{\varepsilon}\sum_{\substack{r,s=1 \\ |t_r-t_s|\le 4\varepsilon\Delta}}^{R}\gamma(t_r)\gamma(t_s)\left|\sum_{N\le n \le KN}b_n n^{-1/2+i(t_r-t_s)}\right|^2.
\end{align*}
\end{lemma}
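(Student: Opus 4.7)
The strategy is to exploit Fourier positivity: the replacement of the (possibly complex) coefficients $a_n$ by the positive majorants $b_n$ will be effected via a positive-definite kernel in the $(t_r-t_s)$-variable, in the spirit of a Fej\'er kernel. Concretely, I use the scaled triangle kernel $K_\eta(u)=\max(0,1-|u|/\eta)$ with $\eta=4\varepsilon\Delta$; this is non-negative, and its Fourier transform is the non-negative sinc-squared.

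\emph{Step 1 (covering the wide indicator by narrow translates).} Set $\eta'=\eta/2=2\varepsilon\Delta$. For any $u$ with $|u|\le\Delta$, some lattice point $k\eta'$ lies within $\eta'/2=\varepsilon\Delta$ of $u$, and then $K_\eta(u-k\eta')\ge 3/4$. Hence
\[
1_{|u|\le\Delta}(u)\le \tfrac{4}{3}\sum_{|k|\le K_0}K_\eta(u-k\eta'),\qquad K_0 = O(1/\varepsilon),
\]
so the left-hand side of the lemma is controlled by $\tfrac{4}{3}\sum_{|k|\le K_0}T_k$ where
\[
T_k=\sum_{r,s}K_\eta(t_r-t_s-k\eta')\,\gamma(t_r)\gamma(t_s)\left|\sum_{N\le n\le KN} a_n n^{-1/2+i(t_r-t_s)}\right|^2.
\]

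\emph{Step 2 (Fourier expansion and removal of the shift).} For fixed $k$, expand the inner square bilinearly in $(n_1,n_2)$ and insert $K_\eta(t_r-t_s-k\eta')=\int \hat K_\eta(\xi)e^{i(t_r-t_s-k\eta')\xi}d\xi$. The double sum in $r,s$ collapses to $|Q(\xi+\log(n_1/n_2))|^2\ge 0$ with $Q(\alpha)=\sum_r\gamma(t_r)e^{it_r\alpha}$, and the shift contributes only the unimodular phase $e^{-ik\eta'\xi}$. Taking absolute values under the $\xi$-integral, the phase drops out (it has modulus $1$), $\hat K_\eta\ge 0$ and $|Q|^2\ge 0$ are preserved, and $|a_{n_1}\overline{a_{n_2}}|\le b_{n_1}b_{n_2}$ lets me replace $a$ by $b$ inside the bilinear $n$-sum. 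Running the Fourier identity backwards with the $b$'s and no shift gives
\[
T_k\le \sum_{r,s}K_\eta(t_r-t_s)\,\gamma(t_r)\gamma(t_s)\left|\sum_{N\le n\le KN} b_n n^{-1/2+i(t_r-t_s)}\right|^2.
\]
Bounding $K_\eta(t_r-t_s)\le 1_{|t_r-t_s|\le 4\varepsilon\Delta}$ and summing over the $O(1/\varepsilon)$ values of $k$ yields the stated estimate.

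\emph{Main obstacle.} The crux is compatibility in Step~2: the kernel must simultaneously be narrow enough to enforce the tight window $|t_r-t_s|\le 4\varepsilon\Delta$ on the right, and positive-definite so that triangle-inequality-under-the-integral both kills the shift phase $e^{-ik\eta'\xi}$ and permits the replacement $a\leftrightarrow b$. No single majorant of $1_{|\cdot|\le \Delta}$ can have both features (it would need width $\gtrsim\Delta$); the resolution is to cover $[-\Delta,\Delta]$ by $O(1/\varepsilon)$ narrow positive-definite translates of $K_\eta$ and to absorb each shift into a unimodular factor, which is precisely what generates both the $1/\varepsilon$ and the tight window $4\varepsilon\Delta$.
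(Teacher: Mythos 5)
Your proof is correct and genuinely different from the paper's. Both exploit Fourier positivity via the Fej\'er-type pair $(\text{triangle}, \mathrm{sinc}^2)$, but you and the author deploy it differently. The paper uses a single pass: it majorizes the indicator $\mathbf{1}_{|t_r-t_s|\le\Delta}$ by $\widehat F\left((t_r-t_s)/(4\Delta)\right)$ (valid since $\widehat F\gg 1$ on $[-1/4,1/4]$), pushes the computation through Fourier inversion to land on $\int F(y)\,(\cdots)\,dy$, then invokes the pointwise inequality $F(y)\ll\widehat F(\varepsilon y)$ on $\operatorname{supp} F=[-1,1]$ (which requires $\varepsilon\le 1/4$), and finally rescales $y\mapsto y/\varepsilon$ to produce the factor $1/\varepsilon$ and the narrow window $4\varepsilon\Delta$ in the returning Fourier inversion. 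You instead cover $\mathbf{1}_{|u|\le\Delta}$ from above by a sum of $O(1/\varepsilon)$ translates of the narrow triangle $K_\eta$ with $\eta=4\varepsilon\Delta$, and for each translate carry out a separate positivity argument in which the translation shows up only as the unimodular phase $e^{-ik\eta'\xi}$ under the $\xi$-integral, where it is harmless once absolute values are taken. The paper's rescaling device is slicker and avoids the covering altogether, but it is perhaps less transparent why $1/\varepsilon$ emerges; your version makes the source of $1/\varepsilon$ explicit (the number of translates) at the modest cost of carrying the index $k$. A minor point you should make explicit in a final write-up: each $T_k$ is manifestly real and nonnegative (as $K_\eta\ge 0$, $\gamma>0$, $|\cdot|^2\ge 0$), so $T_k=|T_k|$ and passing absolute values under the $\xi$-integral is legitimate; this is where the replacement $a_{n_1}\overline{a_{n_2}}\to b_{n_1}b_{n_2}$ and the discarding of $e^{-ik\eta'\xi}$ both happen.
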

\begin{proof}
Define
\begin{align}
\label{eq:Fdef}
F(x)=\max\{1-|x|,0\},
\end{align}
and note that 
\begin{align}
\label{eq:Fhat}
\widehat F(y)=\left(\frac{\sin{\pi y}}{\pi y}\right)^2\ge 0,
\end{align}
where $\widehat F$ denotes the Fourier transform. Since 
\begin{align*}
\widehat F\left(x\right)\gg1 \quad \text{if} \quad |x|\le \frac{1}{4},
\end{align*}
we have 
\begin{align}
\label{eq:Fhathat1}
& \sum_{\substack{r,s=1 \\ |t_r-t_s|\le \Delta }}^{R}\gamma(t_r)\gamma(t_s)\left|\sum_{N\le n \le KN}a_n n^{-1/2+i(t_r-t_s)}\right|^2\ll  \\ &  \sum_{\substack{r,s=1}}^{R}\gamma(t_r)\gamma(t_s)\widehat F\left(\frac{t_r-t_s}{4\Delta}\right)\left|\sum_{N\le n \le KN}a_n n^{-1/2+i(t_r-t_s)}\right|^2. \nonumber
\end{align}
Expanding the square, interchanging summation and using Fourier inversion, we get
\begin{align*}
&\sum_{\substack{r,s=1}}^{R}\gamma(t_r)\gamma(t_s)\widehat F\left(\frac{t_r-t_s}{4\Delta}\right)\left|\sum_{N\le n \le KN}a_n n^{-1/2+i(t_r-t_s)}\right|^2= \\ & \int_{-\infty}^{\infty}F(y)\sum_{N\le n_1,n_2\le KN}a_{n_1}\overline a_{n_2}(n_1n_2)^{-1/2} \\ & \quad \quad \quad \times \sum_{\substack{r,s=1}}^{R}\gamma(t_r)\gamma(t_s)e\left(y\left(\frac{t_r}{4\Delta}-\frac{t_s}{4\Delta} \right) \right)n_1^{i(t_r-t_s)}n_2^{-i(t_r-t_s)}dy,
\end{align*}
and hence by~\eqref{eq:anUB} 
\begin{align*}
&\sum_{\substack{r,s=1}}^{R} \gamma(t_r)\gamma(t_s)\widehat F\left(\frac{t_r-t_s}{4\Delta}\right)\left|\sum_{N\le n \le KN}a_n n^{-1/2+i(t_r-t_s)}\right|^2\le \\ & \int_{-\infty}^{\infty}F(y)\sum_{N\le n_1,n_2\le KN}b_{n_1}b_{n_2}(n_1n_2)^{-1/2}\left|\sum_{r=1}^{R}\gamma(t_r)e\left(\frac{yt_r}{4\Delta}\right)n_1^{it_r}n_2^{-it_r} \right|^2dy.
\end{align*}
Since 
\begin{align}
\label{eq:Fycondcond}
F(y)=0 \quad \text{if} \quad |y|\ge 1 \quad \text{and} \quad  F(y)\le 1 \quad \text{otherwise},
\end{align}
and 
\begin{align*}
\widehat F(\varepsilon y)\gg 1 \quad \text{if} \quad y\le \frac{1}{4\varepsilon},
\end{align*}
the assumption~\eqref{eq:coefficientsepsilon} implies that
\begin{align*}
&\sum_{\substack{r,s=1}}^{R}\gamma(t_r)\gamma(t_s)\widehat F\left(\frac{t_r-t_s}{4\Delta}\right)\left|\sum_{N\le n \le KN}a_n n^{-1/2+i(t_r-t_s)}\right|^2\ll \\ & \int_{-\infty}^{\infty}\widehat F(\varepsilon y)\sum_{N\le n_1,n_2\le 2N}b_{n_1}b_{n_2}(n_1n_2)^{-1/2}\left|\sum_{r=1}^{R}\gamma(t_r)e\left(\frac{yt_r}{4\Delta}\right)n_1^{it_r}n_2^{-it_r} \right|^2dy,
\end{align*}
and hence 
\begin{align*}
&\sum_{\substack{r,s=1}}^{R}\gamma(t_r)\gamma(t_s)\widehat F\left(\frac{t_r-t_s}{4\Delta}\right)\left|\sum_{N\le n \le 2N}a_n n^{-1/2+i(t_r-t_s)}\right|^2\ll \\ & \frac{1}{\varepsilon}\int_{-\infty}^{\infty}\widehat F(y)\sum_{N\le n_1,n_2\le 2N}b_{n_1}b_{n_2}(n_1n_2)^{-1/2}\left|\sum_{r=1}^{R}\gamma(t_r)e\left(\frac{yt_r}{4\varepsilon \Delta}\right)n_1^{it_r}n_2^{-it_r} \right|^2dy.
\end{align*}

Expanding the square, interchanging summation and using Fourier inversion, we get 
\begin{align*}
&\int_{-\infty}^{\infty}\widehat F(y)\sum_{N\le n_1,n_2\le 2N}b_{n_1}b_{n_2}(n_1n_2)^{-1/2}\left|\sum_{r=1}^{R}\gamma(t_r)e\left(\frac{yt_r}{4\varepsilon \Delta}\right)n_1^{it_r}n_2^{-it_r} \right|^2dy= \\ &
\quad \quad \quad \sum_{\substack{r,s=1}}^{R}\gamma(t_r)\gamma(t_s)\int_{-\infty}^{\infty}\widehat F(y)e\left(y\left(\frac{t_r}{4\varepsilon\Delta}-\frac{t_s}{4\varepsilon\Delta} \right) \right)dy \\ & \quad \quad \quad \quad \times \sum_{N\le n_1,n_2\le 2N}b_{n_1}b_{n_2}n_1^{-1/2+i(t_r-t_s)}n_2^{-1/2-i(t_r-t_s)},
\end{align*}
which after rearranging gives
\begin{align*}
&\sum_{\substack{r,s=1}}^{R}\gamma(t_r)\gamma(t_s)\widehat F\left(\frac{t_r-t_s}{4\Delta}\right)\left|\sum_{N\le n \le 2N}a_n n^{-1/2+i(t_r-t_s)}\right|^2 \\ &  \quad \quad \quad \quad \ll \frac{1}{\varepsilon}\sum_{r,s=1}^{R}\gamma(t_r)\gamma(t_s)F\left(\frac{t_r-t_s}{4\varepsilon \Delta}\right)\left|\sum_{N\le n \le 2N}b_n n^{-1/2+i(t_r-t_s)} \right|^2,
\end{align*}
and the result follows from~\eqref{eq:Fhathat1} and~\eqref{eq:Fycondcond}.
\end{proof}
\begin{lemma}
\label{lem:smoothsums}
Let $N$ be an integer, $t_r$ a sequence of real numbers, $a_n$ a sequence of complex numbers satisfying
\begin{align*}
|a_n|\le 1,
\end{align*}
and $\gamma$ a sequence of positive real numbers. Let $c_1<c_2$ be constants and for each pair of integers $r,s$ let $N_{r,s}$ and $M_{r,s}$ be integers satisfying
\begin{align*}
c_1N\le N_{r,s}< M_{r,s}\le c_2N.
\end{align*}
For any $\Delta \ge 1$ we have 
\begin{align*}
& \sum_{\substack{r,s=1 \\ |t_r-t_s|\le \Delta}}^{R}\gamma(t_r)\gamma(t_s)\left|\sum_{N_{r,s}\le n \le M_{r,s}}a_n n^{i(t_r-t_s)}\right|^2 \\ &  \quad \quad \quad \quad \quad \ll (\log{N})^2\sum_{\substack{r,s=1 \\ |t_r-t_s|\le \Delta}}^{R}\gamma(t_r)\gamma(t_s)\left|\sum_{c_1N\le n \le c_2N}n^{i(t_r-t_s)}\right|^2.
\end{align*}
\begin{proof}
Let 
\begin{align*}
S=\sum_{\substack{r,s=1 \\ |t_r-t_s|\le \Delta}}^{R}\gamma(t_r)\gamma(t_s)\left|\sum_{N_{r,s}\le n \le M_{r,s}}a_n n^{i(t_r-t_s)}\right|^2.
\end{align*}
For each pair $r,s$ we have 
\begin{align*}
&\sum_{N_{r,s}\le n \le M_{r,s}}a_n n^{i(t_r-t_s)}= \\ & \int_{0}^{1}\left(\sum_{N_{r,s}\le n \le M_{r,s}}e(\alpha n)\right)\sum_{c_1N\le n \le c_2N}a_ne(\alpha n) n^{i(t_r-t_s)}d\alpha,
\end{align*}
so that defining
\begin{align*}
F(\alpha)=\min\left(N,\frac{1}{\|\alpha\|}\right),
\end{align*}
we have 
\begin{align*}
S\ll \sum_{\substack{r,s=1 \\ |t_r-t_s|\le \Delta}}^{R}\gamma(t_r)\gamma(t_s)\left(\int_{0}^{1}F(\alpha)\left|\sum_{c_1N\le n \le c_2N}a_ne(\alpha n) n^{i(t_r-t_s)} \right|d\alpha \right)^2.
\end{align*}
By the Cauchy-Schwarz inequality 
\begin{align*}
S\ll \left(\int_{0}^{1}F(\alpha)d\alpha\right)\sum_{\substack{r,s=1 \\ |t_r-t_s|\le \Delta}}^{R}\gamma(t_r)\gamma(t_s)\int_{0}^{1}F(\alpha)\left|\sum_{c_1N\le n \le c_2N}a_ne(\alpha n) n^{i(t_r-t_s)} \right|^2d\alpha.
\end{align*}
Since 
\begin{align*}
\int_{0}^{1}F(\alpha)d\alpha\ll  \log{N},
\end{align*}
we see that 
\begin{align*}
S\ll \log{N}\int_{0}^{1}F(\alpha)\sum_{\substack{r,s=1 \\ |t_r-t_s|\le \Delta}}^{R}\gamma(t_r)\gamma(t_s)\left|\sum_{c_1N\le n \le c_2N}a_ne(\alpha n) n^{i(t_r-t_s)} \right|^2d\alpha.
\end{align*}
Applying Lemma~\ref{lem:coefficients} with $\varepsilon=1/4$ gives
\begin{align*}
S&\ll \log{N}\int_{0}^{1}F(\alpha)d\alpha \sum_{\substack{r,s=1 \\ |t_r-t_s|\le \Delta}}^{R}\gamma(t_r)\gamma(t_s)\left|\sum_{c_1N\le n \le c_2N}n^{i(t_r-t_s)} \right|^2 \\
&\ll (\log{N})^2\sum_{\substack{r,s=1 \\ |t_r-t_s|\le \Delta}}^{R}\gamma(t_r)\gamma(t_s)\left|\sum_{c_1N\le n \le c_2N}n^{i(t_r-t_s)} \right|^2,
\end{align*}
and completes the proof.
\end{proof}
\end{lemma}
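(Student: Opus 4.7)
The plan is a two-step reduction: first detect the variable endpoints $N_{r,s}, M_{r,s}$ by Fourier expansion of the indicator of $[N_{r,s}, M_{r,s}]$, push the resulting frequency integral out of the pair-sum via Cauchy--Schwarz, and then strip the resulting coefficients by applying Lemma~\ref{lem:coefficients}. Each of these two steps contributes one factor $\log N$ to the final bound.

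For the first step, for each pair $(r,s)$ I would write
\[
\sum_{N_{r,s}\le n\le M_{r,s}} a_n n^{i(t_r-t_s)}
= \int_{0}^{1} K_{r,s}(\alpha) \Bigl(\sum_{c_1N\le n\le c_2N} a_n e(-\alpha n)\, n^{i(t_r-t_s)}\Bigr) d\alpha,
\]
where $K_{r,s}(\alpha)=\sum_{N_{r,s}\le m\le M_{r,s}} e(\alpha m)$. Crucially $|K_{r,s}(\alpha)|\le F(\alpha):=\min(N,\|\alpha\|^{-1})$ \emph{uniformly in} $(r,s)$, with $\int_0^1 F(\alpha)\,d\alpha\ll\log N$. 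Squaring, applying Cauchy--Schwarz in $\alpha$ with the decomposition $F=F^{1/2}\cdot F^{1/2}$, and summing the resulting weighted pair-sum gives
\[
S \ll (\log N)\int_0^1 F(\alpha)\Bigl(\sum_{|t_r-t_s|\le \Delta}\gamma(t_r)\gamma(t_s)\Bigl|\sum_{c_1N\le n\le c_2N} a_n e(-\alpha n)\, n^{i(t_r-t_s)}\Bigr|^2\Bigr) d\alpha.
\]

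For the second step I would apply Lemma~\ref{lem:coefficients} to the inner pair-sum \emph{uniformly in} $\alpha$, with the coefficient sequence of that lemma taken to be $a_n e(-\alpha n)$ (which still satisfies $|a_n e(-\alpha n)|\le 1$), trivial majorant $b_n=1$, and $\varepsilon=1/4$. This replaces the $\alpha$-twisted coefficients by $1$'s, reducing the inner pair-sum to the $\alpha$-free quantity $\sum_{|t_r-t_s|\le\Delta}\gamma(t_r)\gamma(t_s)|\sum_{c_1N\le n\le c_2N} n^{i(t_r-t_s)}|^2$. A final integration $\int F(\alpha)\,d\alpha\ll\log N$ produces the second logarithmic factor and yields the stated estimate.

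The main obstacle, such as it is, lies in the fact that Lemma~\ref{lem:coefficients} is phrased for Dirichlet polynomials carrying the $n^{-1/2+it}$ weight, whereas the statement we are proving has simply $n^{it}$. Since $n\asymp N$ on the summation range, this is resolved by absorbing the factor $(n/N)^{1/2}$ into the coefficient and majorant sequences (both bounded above and below by constants), or equivalently by observing that the Fourier-positivity argument proving Lemma~\ref{lem:coefficients} does not use the $n^{-1/2}$ normalisation beyond providing a comparable weight $(n_1 n_2)^{-1/2}$ for $n_1, n_2 \asymp N$. Everything else is a mechanical unwinding of Fourier expansions and Cauchy--Schwarz.
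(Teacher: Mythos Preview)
Your proposal is correct and follows essentially the same route as the paper: Fourier-detect the interval $[N_{r,s},M_{r,s}]$, bound the kernel by $F(\alpha)=\min(N,\|\alpha\|^{-1})$, apply Cauchy--Schwarz in $\alpha$ to extract one $\log N$, then invoke Lemma~\ref{lem:coefficients} with $\varepsilon=1/4$ to strip the twist $a_ne(-\alpha n)$ and integrate $F$ once more for the second $\log N$. Your remark on the $n^{-1/2}$ normalisation in Lemma~\ref{lem:coefficients} is a point the paper leaves implicit, and your resolution (absorb $(n/N)^{1/2}\asymp 1$ into the coefficients) is the right one.
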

Our next two results are variants of~\cite[Lemma~2]{HB}.
\begin{lemma}
\label{lem:larger}
Let  $N,K\ge 1$ be real numbers. Suppose $t_r$ is a sequence of real numbers satisfying
\begin{align*}
1\le t_r \le T.
\end{align*}
 For any  $U\ge 1$  we have
we have 
\begin{align*}
&\sum_{\substack{r,s=1 \\ |t_r-t_s|\le \Delta}}^{R}\gamma(t_r)\gamma(t_s)\left|\sum_{N\le n \le KN}n^{-1/2+i(t_r-t_s)}\right|^2 \\ & \ll  (KNU)^{o(1)}\sum_{\substack{r,s=1 \\ |t_r-t_s|\le \Delta}}^{R}\gamma(t_r)\gamma(t_s)\left|\sum_{UN\le n \le 3KNU/2}n^{-1/2+i(t_r-t_s)}\right|^2.
\end{align*}
\end{lemma}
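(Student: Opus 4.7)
The plan is to apply a multiplicative convolution argument together with Lemma~\ref{lem:coefficients}. I introduce the auxiliary Dirichlet polynomial
$$
S_2(\xi) := \sum_{U \le u \le 3U/2} u^{-1/2 + i\xi}
$$
and exploit the factorisation identity
$$
S_2(\xi) \sum_{N \le n \le KN} n^{-1/2 + i\xi} = \sum_{UN \le m \le 3KNU/2} c(m)\, m^{-1/2 + i\xi},
$$
where $c(m) = \#\{(n,u) : N \le n \le KN,\ U \le u \le 3U/2,\ nu = m\}$ satisfies $c(m) \le d(m) \ll (KNU)^{o(1)}$. Applying Lemma~\ref{lem:coefficients} with $\varepsilon = 1/4$, taking $a_m = c(m)$ and $b_m \equiv (KNU)^{o(1)}$, then yields
$$
\sum_{\substack{r,s \\ |t_r-t_s|\le\Delta}} \gamma(t_r)\gamma(t_s) |S_2(t_r-t_s)|^2 \left|\sum_{N\le n \le KN} n^{-1/2+i(t_r-t_s)}\right|^2 \ll (KNU)^{o(1)} \sum_{\substack{r,s \\ |t_r-t_s|\le\Delta}} \gamma(t_r)\gamma(t_s) \left|\sum_{UN \le n \le 3KNU/2} n^{-1/2+i(t_r-t_s)}\right|^2.
$$

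To conclude I must remove the factor $|S_2(t_r-t_s)|^2$ from the left-hand side. I would smooth in the frequency variable by applying Lemma~\ref{lem:removemax} to obtain
$$
\left|\sum_{N \le n \le KN} n^{-1/2 + i(t_r-t_s)}\right|^2 \ll (\log N)^3 \int_{|\tau| \le \log N} \left|\sum_{N \le n \le KN} n^{-1/2 + i(t_r-t_s+\tau)}\right|^2 d\tau,
$$
and then restrict the $\tau$-integration to the subset where $|S_2(t_r-t_s+\tau)|^2 \ge (KNU)^{-\eta}$ for a small parameter $\eta>0$. On this subset, pointwise division by $|S_2|^2$ incurs only a factor $(KNU)^\eta$, so the first step's bound on the weighted $|S_1 S_2|^2$ sum transfers to the desired $|S_1|^2$ sum.

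\textbf{Main obstacle.} The principal difficulty is to show, uniformly in $\xi$, that the exceptional $\tau$-set $\{\tau \in [-\log N, \log N] : |S_2(\xi + \tau)|^2 < (KNU)^{-\eta}\}$ has measure $o(\log N)$, with the complementary contribution controlled by the trivial $L^\infty$ bound $\left|\sum_{N \le n \le KN} n^{-1/2+it}\right|^2 \ll KN$ so that the total loss stays within the $(KNU)^{o(1)}$ budget. A Tur\'an-type inequality for the short Dirichlet polynomial $S_2$ (whose length is $O(U)$ with smooth coefficients $u^{-1/2}$), or a Jensen-formula argument applied to $\log|S_2|$, should yield the required measure bound; the careful quantitative execution of this step is the technical heart of the proof.
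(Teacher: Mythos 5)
The convolution idea is the right starting point — you correctly identify that multiplying by a short Dirichlet polynomial $S_2$ of length $\sim U$ lengthens the sum to $[UN, 3KNU/2]$ with $(KNU)^{o(1)}$ coefficients, so that Lemma~\ref{lem:coefficients} applies. The difficulty you flag (removing the $|S_2(t_r-t_s)|^2$ weight on the left) is genuine, but the obstacle is worse than you suggest, and the strategy you propose to handle it does not close. Even granting a Jensen/Tur\'an bound showing that the exceptional $\tau$-set $B$ (where $|S_2(\xi+\tau)|$ is small) has small measure, the contribution you then incur is of order $|B|\cdot KN\cdot I(\Delta,\gamma)$, and there is no lower bound on the right-hand side against which to compare it: for large $\Delta$ the target quantity $\sum_{r,s}\gamma(t_r)\gamma(t_s)\bigl|\sum_{UN\le n\le 3KNU/2}n^{-1/2+i(t_r-t_s)}\bigr|^2$ can be far smaller than $KN\cdot I(\Delta,\gamma)$, so the error cannot be absorbed. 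An absorption into the left-hand side instead would require the $B$-contribution to be a small fraction of $\int|S_1(\xi+\tau)|^2\,d\tau$ for each individual $\xi$, and there is no reason the mass of $|S_1|^2$ avoids the zeros of $S_2$. So the plan as stated has a real gap, not merely a technicality.

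The paper's proof (following Heath-Brown's Lemma~2) avoids the pointwise-smallness problem entirely by averaging over multiplicative characters. One takes a prime $P$ with $8U\le P\le 16U$ and replaces your $S_2(\xi)$ by $S_2^{\chi}(\xi)=\sum_{U\le u\le 3U/2}\chi(u)u^{-1/2+i\xi}$. For each fixed $\chi$ the convolution coefficients $a(m,\chi)=\sum_{un=m}\chi(u)$ still satisfy $|a(m,\chi)|\ll (KNU)^{o(1)}$, so Lemma~\ref{lem:coefficients} gives $S(\chi)\ll (KNU)^{o(1)}\cdot(\text{RHS})$ uniformly in $\chi$, hence $\sum_{\chi\bmod P}S(\chi)\ll P(KNU)^{o(1)}\cdot(\text{RHS})$. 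On the other hand, expanding $|S_2^\chi|^2$ and summing over $\chi$, orthogonality of characters kills the off-diagonal $u_1\ne u_2$ (since $U\le u_1,u_2\le 3U/2<P$ forces $u_1\equiv u_2\pmod P\Rightarrow u_1=u_2$), leaving $\sum_{\chi}|S_2^\chi(\xi)|^2=\phi(P)\sum_{U\le u\le 3U/2}u^{-1}\gg P$ for \emph{every} $\xi$. This uniform pointwise lower bound is exactly what your approach lacks, and it makes the division by $|S_2|^2$ trivial. You should replace your single polynomial $S_2$ by the family $S_2^\chi$ and use orthogonality mod $P$ rather than attempting to bound the measure of the set where a fixed $S_2$ is small.
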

\begin{proof}
Let $P$ be a prime number satisfying
\begin{align}
\label{eq:PUcond}
8U\le P \le 16U,
\end{align}
and for each multiplicative character $\chi$ mod $P$ let
\begin{align*}
S(\chi)=\sum_{\substack{r,s=1 \\ |t_r-t_s|\le \Delta}}^{R}\gamma(t_r)\gamma(t_s)\left|\sum_{N\le n \le KN}n^{-1/2+i(t_r-t_s)}\right|^2\left|\sum_{U\le u \le 3U/2}\chi(u)u^{-1/2+i(t_r-t_s)} \right|^2.
\end{align*}
For integer $m$ define
\begin{align*}
a(m,\chi)=\sum_{\substack{U\le u \le 3U/2 \\ N\le n \le KN \\ un=m}}\chi(u),
\end{align*}
so that 
\begin{align*}
a(m,\chi)\ll (KNU)^{o(1)},
\end{align*}
and hence by Lemma~\ref{lem:coefficients}
\begin{align*}
S(\chi)\ll (KNU)^{o(1)}\sum_{\substack{r,s=1 \\ |t_r-t_s|\le \Delta}}^{R}\gamma(t_r)\gamma(t_s)\left|\sum_{NU\le n \le 3KNU/2}n^{-1/2+i(t_r-t_s)}\right|^2,
\end{align*}
which implies 
\begin{align}
\label{eq:SchiUB}
\sum_{\chi \mod{P}}S(\chi)\ll P(KNU)^{o(1)}\sum_{\substack{r,s=1 \\ |t_r-t_s|\le \Delta}}^{R}\gamma(t_r)\gamma(t_s)\left|\sum_{NU\le n \le 3KNU/2}n^{-1/2+i(t_r-t_s)}\right|^2.
\end{align}
We have 
\begin{align*}
&\sum_{\chi \mod{P}}S(\chi)=\sum_{\substack{r,s=1 \\ |t_r-t_s|\le \Delta}}^{R}\gamma(t_r)\gamma(t_s)\left|\sum_{N\le n \le KN}n^{-1/2+i(t_r-t_s)}\right|^2 \\ & \times\sum_{U\le u_1,u_2\le 3U/2}u_1^{-1/2+i(t_r-t_s)}u_2^{-1/2-i(t_r-t_s)}\sum_{\chi \mod{P}}\chi(u_1)\overline \chi(u_2),
\end{align*}
hence by~\eqref{eq:PUcond} and orthogonality of characters
\begin{align*}
\sum_{\chi \mod{P}}S(\chi)\gg P\log{U}\sum_{\substack{r,s=1 \\ |t_r-t_s|\le \Delta}}^{R}\gamma(t_r)\gamma(t_s)\left|\sum_{N\le n \le KN}n^{-1/2+i(t_r-t_s)}\right|^2,
\end{align*}
and the result follows from~\eqref{eq:SchiUB}.
\end{proof}
\begin{cor}
\label{cor:larger}
 For any  integer $M\ge 2N$  we have
we have 
\begin{align*}
S(N,\Delta,\gamma)\ll  M^{o(1)}S(M,\Delta,\gamma).
\end{align*}
\end{cor}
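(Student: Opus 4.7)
The plan is to reduce $S(N,\Delta,\gamma)$ to a sum with the ``normalized'' coefficients $n^{-1/2+i(t_r-t_s)}$ for which Lemma~\ref{lem:larger} applies, then lift the range from near $N$ to near $M$, and finally convert back to $S(M,\Delta,\gamma)$. For the first reduction, write $n^{i(t_r-t_s)} = N^{1/2}(n/N)^{1/2} n^{-1/2+i(t_r-t_s)}$; since the coefficient $(n/N)^{1/2}$ is bounded by $\sqrt{2}$ on $[N,2N]$, Lemma~\ref{lem:coefficients} with $b_n\equiv\sqrt{2}$ and $\eps = 1/4$ yields
\begin{equation*}
S(N,\Delta,\gamma) \ll N \sum_{r,s,\, |t_r-t_s|\le \Delta}\gamma(t_r)\gamma(t_s)\Bigl|\sum_{N\le n\le 2N}n^{-1/2+i(t_r-t_s)}\Bigr|^2.
\end{equation*}

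The main obstacle is that Lemma~\ref{lem:larger} inflates the range ratio from $K$ to $3K/2$, so a direct application with $K=2$ would overshoot $[M,2M]$. To circumvent this I would decompose $[N,2N]$ into three sub-intervals of ratio at most $4/3$,
\begin{equation*}
[N,2N] = [N,4N/3]\cup[4N/3, 16N/9]\cup[16N/9, 2N],
\end{equation*}
and split the squared inner sum into three pieces using $|a_1+a_2+a_3|^2 \le 3(|a_1|^2+|a_2|^2+|a_3|^2)$. To the piece over $[aN,bN]$ I apply Lemma~\ref{lem:larger} with scale $N_0 = aN$, $K = b/a$ and $U = M/(aN)$, so that $UN_0 = M$ and the produced range becomes $[M, (3b/(2a))M]$. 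This equals $[M,2M]$ for the first two pieces (where $b/a = 4/3$) and $[M, 27M/16]\subseteq[M,2M]$ for the third (where $b/a = 9/8$). The requirement $U\ge 1$ holds in all three cases because $M\ge 2N$, and the incurred loss is $(KN_0 U)^{o(1)} = M^{o(1)}$.

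Finally, I convert the resulting sums on sub-intervals $[M, M_{r,s}]\subseteq[M,2M]$ back into $S(M,\Delta,\gamma)$. Writing $n^{-1/2+i(t_r-t_s)} = M^{-1/2}(M/n)^{1/2}n^{i(t_r-t_s)}$ with $(M/n)^{1/2}\le 1$ for $n\ge M$, Lemma~\ref{lem:smoothsums} applied with $c_1 = 1$, $c_2 = 2$ gives
\begin{equation*}
\sum_{r,s,\, |t_r-t_s|\le \Delta}\gamma(t_r)\gamma(t_s)\Bigl|\sum_{M\le n\le M_{r,s}}n^{-1/2+i(t_r-t_s)}\Bigr|^2 \ll M^{-1+o(1)}S(M,\Delta,\gamma)
\end{equation*}
for any $M_{r,s}\le 2M$. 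Combining the three pieces with the factor $N$ from the first step yields $S(N,\Delta,\gamma)\ll (N/M)M^{o(1)}S(M,\Delta,\gamma)$, and since $N/M\le 1/2$ the claimed bound follows. The hard part is recognising that three pieces of ratio $\le 4/3$ are enough to bridge the ratio mismatch between the input range $[N,2N]$ and the target $[M,2M]$; everything else is parameter bookkeeping via Lemmas~\ref{lem:larger}, \ref{lem:coefficients}, and~\ref{lem:smoothsums}.
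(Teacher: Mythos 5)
Your proof is correct and follows the paper's approach: split $[N,2N]$ into three sub-intervals so that the $3/2$ range-inflation factor in Lemma~\ref{lem:larger} never overshoots $[M,2M]$, then convert back with a smoothing lemma. The paper takes three equal-ratio pieces with $K=2^{1/3}$ and uses Lemma~\ref{lem:coefficients} for the final extension, while you take ratios $4/3,4/3,9/8$ and use Lemma~\ref{lem:smoothsums}, and you additionally track the $n^{-1/2}$ normalization relative to the stated definition of $S$ (obtaining the slightly sharper $S(N,\Delta,\gamma)\ll (N/M)M^{o(1)}S(M,\Delta,\gamma)$) --- these are cosmetic differences.
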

\begin{proof}
Writing $K=2^{1/3}$ we have 
\begin{align}
\label{eq:kkkKKkKkKkK}
S(N,\Delta,\gamma)\ll \sum_{i=0}^{2}S_i,
\end{align}
where 
\begin{align*}
S_i=\sum_{\substack{r,s=1 \\ |t_r-t_s|\le \Delta}}^{R}\gamma(t_r)\gamma(t_s)\left|\sum_{K^{i}N\le n \le K^{i+1}N}n^{-1/2+i(t_r-t_s)}\right|^2.
\end{align*}
Define
\begin{align*}
M_i=\frac{M}{K^{i}N},
\end{align*}
and apply Lemma~\ref{lem:larger} to get 
\begin{align*}
S_i\ll M^{o(1)}\sum_{\substack{r,s=1 \\ |t_r-t_s|\le \Delta}}^{R}\gamma(t_r)\gamma(t_s)\left|\sum_{M\le n \le 3KM/2}n^{-1/2+i(t_r-t_s)}\right|^2.
\end{align*}
By Lemma~\ref{lem:coefficients}
\begin{align*}
S_i\ll  M^{o(1)}\sum_{\substack{r,s=1 \\ |t_r-t_s|\le \Delta}}^{R}\gamma(t_r)\gamma(t_s)\left|\sum_{M\le n \le 2M}n^{-1/2+i(t_r-t_s)}\right|^2,
\end{align*}
and the result follows from~\eqref{eq:kkkKKkKkKkK}.
\end{proof}
The following is our variant of~\cite[Lemma~4]{HB}.
\begin{lemma}
\label{lem:square}
For any $M\ge 8N^2$ we have 
\begin{align*}
S(N,\Delta,\gamma)^2\ll M^{o(1)}I(\Delta,\gamma)S(M,\Delta,\gamma).
\end{align*}
\end{lemma}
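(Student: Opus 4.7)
The plan follows Heath-Brown's approach in [HB, Lemma~4]. The strategy is three-fold: apply Cauchy--Schwarz to the definition of $S(N,\Delta,\gamma)$ to introduce an extra factor of the inner Dirichlet polynomial; square that polynomial to produce a convolution of length $\asymp N^{2}$ with divisor-type coefficients; and then use the auxiliary results of this section to clean up the coefficients and shift the range to $[M,2M]$.

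First I would apply Cauchy--Schwarz with weights $\sqrt{\gamma(t_{r})\gamma(t_{s})}$ to the defining sum, yielding
$$S(N,\Delta,\gamma)^{2}\le I(\Delta,\gamma)\cdot T,\qquad T=\sum_{\substack{r,s \\ |t_{r}-t_{s}|\le\Delta}}\gamma(t_{r})\gamma(t_{s})\Bigl|\sum_{N\le n\le 2N}n^{i(t_{r}-t_{s})}\Bigr|^{4}.$$
Writing the fourth power as the square of the squared polynomial, I have
$$\Bigl|\sum_{N\le n\le 2N}n^{it}\Bigr|^{4}=\Bigl|\sum_{N^{2}\le m\le 4N^{2}}c(m)\,m^{it}\Bigr|^{2},$$
where $c(m)=|\{(n_{1},n_{2}):n_{1}n_{2}=m,\ N\le n_{i}\le 2N\}|\le d(m)\ll N^{o(1)}$.

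Next I would invoke Lemma~\ref{lem:coefficients} to eliminate the coefficients $c(m)$. Writing $m^{it}=m^{1/2}\cdot m^{-1/2+it}$ and choosing $a_{m}=c(m)m^{1/2}$ together with the uniform majorant $b_{m}\ll N^{1+o(1)}$ (valid on $[N^{2},4N^{2}]$), Lemma~\ref{lem:coefficients} with $\varepsilon=1/4$ produces
$$T\ll N^{2+o(1)}\sum_{\substack{r,s \\ |t_{r}-t_{s}|\le\Delta}}\gamma(t_{r})\gamma(t_{s})\Bigl|\sum_{N^{2}\le m\le 4N^{2}}m^{-1/2+i(t_{r}-t_{s})}\Bigr|^{2}.$$

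Finally I would decompose $[N^{2},4N^{2}]$ into $O(1)$ sub-intervals of ratio $4/3$ and apply Lemma~\ref{lem:larger} to each piece $[A,4A/3]$ with $K=4/3$ and $U=M/(2A)$, chosen so the new range becomes $[M/2,M]$; the condition $U\ge 1$ is guaranteed by $M\ge 8N^{2}$ and $A\le 4N^{2}$. Each resulting sum over $[M/2,M]$ with $m^{-1/2+it}$ kernel is converted to an $m^{it}$ sum via Lemma~\ref{lem:smoothsums} (taking coefficients $a_{m}=(M/(2m))^{1/2}\le 1$), which absorbs a factor $M^{-1+o(1)}$ and produces exactly $S(M/2,\Delta,\gamma)$; Corollary~\ref{cor:larger} then bounds this by $M^{o(1)}S(M,\Delta,\gamma)$ since $M\ge 2\cdot(M/2)$. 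Combining everything, the accumulated scaling $N^{2+o(1)}\cdot M^{-1+o(1)}$ is absorbed into $M^{o(1)}$ using $M\ge 8N^{2}$, so $T\ll M^{o(1)}S(M,\Delta,\gamma)$ and the claim follows. The main obstacle is purely bookkeeping: checking the applicability conditions ($\varepsilon\le 1/4$, $K\ge 1$, $U\ge 1$, $M\ge 2N'$) and tracking the powers of $N$ and $M$ through each reduction so that the final factor really is $M^{o(1)}$.
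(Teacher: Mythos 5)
Your proof is correct and follows essentially the same route as the paper: Cauchy--Schwarz to pull out $I(\Delta,\gamma)$, square the Dirichlet polynomial to a divisor-weighted polynomial on $[N^2,4N^2]$, remove the coefficients via Lemma~\ref{lem:coefficients}, then shift the range up to $M$. The only real difference is in the last step: the paper simply splits $[N^2,4N^2]$ into two dyadic pieces and cites Corollary~\ref{cor:larger}, whereas you re-derive that corollary inline by invoking Lemma~\ref{lem:larger} and Lemma~\ref{lem:smoothsums} directly, which works but is unnecessary; the extra $N^{2+o(1)}\cdot M^{-1+o(1)}$ normalization bookkeeping you carry out also disappears if one reads the kernel in~\eqref{eq:SNDdef} as $n^{-1/2+i(t_r-t_s)}$, which is how $S(N,\Delta,\gamma)$ is used throughout the section.
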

\begin{proof}
By the Cauchy-Schwarz inequality and Lemma~\ref{lem:coefficients}
\begin{align*}
S(N,\Delta,\gamma)^2&\le I(\Delta,\gamma)\sum_{\substack{r,s=1 \\ |t_r-t_s|\le \Delta}}^{R}\gamma(t_r)\gamma(t_s)\left|\sum_{N\le n \le 2N}n^{-1/2+i(t_r-t_s)}\right|^4 \\
&\ll N^{o(1)}I(\Delta,\gamma)\sum_{\substack{r,s=1 \\ |t_r-t_s|\le \Delta}}^{R}\gamma(t_r)\gamma(t_s)\left|\sum_{N^2\le n \le 4N^2}n^{-1/2+i(t_r-t_s)}\right|^2.
\end{align*}
We have 
\begin{align*}
&\sum_{\substack{r,s=1 \\ |t_r-t_s|\le \Delta}}^{R}\gamma(t_r)\gamma(t_s)\left|\sum_{N^2\le n \le 4N^2}n^{-1/2+i(t_r-t_s)}\right|^2\ll \\ & \sum_{\substack{r,s=1 \\ |t_r-t_s|\le \Delta}}^{R}\gamma(t_r)\gamma(t_s)\left|\sum_{N^2\le n \le 2N^2}n^{-1/2+i(t_r-t_s)}\right|^2 \\ &+\sum_{\substack{r,s=1 \\ |t_r-t_s|\le \Delta}}^{R}\gamma(t_r)\gamma(t_s)\left|\sum_{2N^2\le n \le 4N^2}n^{-1/2+i(t_r-t_s)}\right|^2,
\end{align*}
and the result follows from the above and Corollary~\ref{cor:larger}.
\end{proof}
The following is a variant of Hilbert's inequality. 
\begin{lemma}
\label{lem:hilbert}
For any well spaced sequence $t_1,\dots,t_R$ and positive real numbers $\gamma(t_r)$  we have 
\begin{align*}
\sum_{\substack{r\neq s}}\frac{\gamma(t_r)\gamma(t_s)}{(t_r-t_s)^2}\ll \|\gamma_2\|^2.
\end{align*}
\end{lemma}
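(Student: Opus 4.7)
The plan is to exploit the fact that the denominator is $(t_r-t_s)^2$ rather than $t_r-t_s$, so one does not actually need the full strength of the Montgomery--Vaughan form of Hilbert's inequality; the well-spacing of the $t_r$ gives absolute convergence on its own, and AM--GM finishes the job.

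First I would symmetrize with the elementary inequality $2\gamma(t_r)\gamma(t_s)\le \gamma(t_r)^2+\gamma(t_s)^2$ applied termwise, which gives
\begin{align*}
\sum_{r\neq s}\frac{\gamma(t_r)\gamma(t_s)}{(t_r-t_s)^2}\le \sum_{r\neq s}\frac{\gamma(t_r)^2}{(t_r-t_s)^2},
\end{align*}
after combining the two symmetric contributions. Then I would fix $r$ and bound the inner sum over $s\neq r$. Since the sequence $t_1,\dots,t_R$ is $1$-spaced, for each integer $n\ge 1$ the set $\{s:n\le |t_r-t_s|<n+1\}$ has cardinality $O(1)$, so
\begin{align*}
\sum_{s\neq r}\frac{1}{(t_r-t_s)^2}\ll \sum_{n\ge 1}\frac{1}{n^2}\ll 1.
\end{align*}

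Summing over $r$ then yields
\begin{align*}
\sum_{r\neq s}\frac{\gamma(t_r)\gamma(t_s)}{(t_r-t_s)^2}\ll \sum_r \gamma(t_r)^2=\|\gamma\|_2^2,
\end{align*}
which is the claim. There is no real obstacle here: the only point to be careful about is the well-spacing hypothesis, which is exactly what makes $\sum_{s\neq r}(t_r-t_s)^{-2}$ uniformly bounded. Note that because of the square in the denominator, no arithmetic or Fourier input of Hilbert--Montgomery--Vaughan type is needed, unlike in the linear case where one has to control the oscillating kernel.
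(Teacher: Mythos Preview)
Your proof is correct and follows essentially the same route as the paper: reduce via a symmetrization inequality to $\sum_{r\neq s}\gamma(t_r)^2/(t_r-t_s)^2$ and then use the well-spacing to bound the inner sum $\sum_{s\neq r}(t_r-t_s)^{-2}\ll 1$. The only cosmetic difference is that the paper obtains the symmetrization via Cauchy--Schwarz on $S^2$ whereas you use AM--GM directly, but both yield the identical intermediate bound.
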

\begin{proof}
Let 
\begin{align*}
S=\sum_{\substack{r\neq s}}\frac{\gamma(t_r)\gamma(t_s)}{(t_r-t_s)^2}.
\end{align*}
By the Cauchy-Schwarz inequality 
\begin{align*}
S^2&=\left(\sum_{\substack{r\neq s}}\frac{\gamma(t_r)}{(t_r-t_s)}\frac{\gamma(t_s)}{(t_r-t_s)}\right)^2\le \left(\sum_{\substack{r\neq s}}\frac{\gamma(t_r)^2}{(t_r-t_s)^2} \right)^2.
\end{align*}
We have 
\begin{align*}
\sum_{\substack{r\neq s}}\frac{\gamma(t_r)^2}{(t_r-t_s)^2}= \sum_{r}\gamma(t_r)^2\sum_{s\neq r}\frac{1}{(t_r-t_s)^2},
\end{align*}
and the result follows since the assumption $t_r$ is well spaced implies for any fixed $r$
\begin{align*}
\sum_{s\neq r}\frac{1}{(t_r-t_s)^2}\ll 1.
\end{align*}
\end{proof}
\begin{lemma}
\label{lem:mvSmall}
Let $N\ge 2$ be an integer, $\Delta \gg1$ a real number, $1\le t_1,\dots,t_{R}\le T$ a well spaced sequence 
and $\gamma(t)$ a sequence of positive real numbers. We have 
\begin{align*}
\sum_{\substack{r,s=1 \\ |t_r-t_s|\le \Delta }}^{R}\gamma(t_r)\gamma(t_s)\left|\sum_{N\le n \le 2N} n^{-1/2+i(t_r-t_s)} \right|^2\ll N\|\gamma\|_2^{2}+\frac{\Delta^{1+o(1)}}{N}I(\Delta,\gamma).
\end{align*}
\end{lemma}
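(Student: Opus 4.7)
\smallskip
\noindent\emph{Proof plan.} The plan is to split $S$ according to whether $r=s$ or $r\neq s$ and handle each part separately. The diagonal contribution is immediate: since $\sum_{N\le n\le 2N} n^{-1/2}\asymp N^{1/2}$, the $r=s$ terms contribute $\asymp N\|\gamma\|_2^2$, matching the first term of the stated bound.

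For the off-diagonal part the main input is the pointwise estimate
\[
\left|\sum_{N\le n\le 2N} n^{-1/2+iu}\right|^2 \ll \frac{N}{u^2}+\frac{|u|}{N}, \qquad |u|\ge 1,
\]
which I would prove via van der Corput's method applied to $\sum_{N\le n\le 2N} n^{iu}$. With $f(x)=(u/2\pi)\log x$ one has $|f'(x)|\asymp |u|/N$ and $|f''(x)|\asymp |u|/N^2$ on $[N,2N]$; the Kusmin--Landau first derivative test then gives $\sum_{N\le n\le M} n^{iu}\ll N/|u|$ uniformly for $M\in[N,2N]$ in the range $1\le |u|\le \pi N$, while the second derivative test gives $\sum_{N\le n\le M} n^{iu}\ll |u|^{1/2}$ for $|u|\ge \pi N$. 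Abel summation transfers these bounds to the weight $n^{-1/2}$ at the cost of a factor $N^{-1/2}$, and squaring produces the displayed estimate.

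Substituting $u=t_r-t_s$ into the off-diagonal part of $S$, the first piece contributes
\[
N\sum_{r\neq s}\frac{\gamma(t_r)\gamma(t_s)}{(t_r-t_s)^2}\ll N\|\gamma\|_2^2
\]
by Hilbert's inequality (Lemma~\ref{lem:hilbert}), while the second piece is at most $(\Delta/N)I(\Delta,\gamma)$ upon bounding $|t_r-t_s|\le \Delta$. Combining these two contributions yields the lemma, with the $\Delta^{o(1)}$ in the statement absorbing implicit constants and any mild logarithmic factors. The only technical step is the pointwise bound on the inner sum, which is a standard application of the first and second derivative tests, so no serious obstacle should arise.
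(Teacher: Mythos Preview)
Your proposal is correct and follows essentially the same route as the paper: split off the diagonal, apply the pointwise bound $\sum_{N\le n\le 2N} n^{it}\ll N/|t|+|t|^{1/2}$ to the off-diagonal, and then invoke Lemma~\ref{lem:hilbert} for the $N/(t_r-t_s)^2$ piece while bounding $|t_r-t_s|\le\Delta$ in the other piece. The only cosmetic difference is that the paper removes the weight $n^{-1/2}$ via Lemma~\ref{lem:coefficients} rather than by partial summation, and it quotes the pointwise bound directly from~\cite[Equation~9.21]{IwKo} (with a $\log t$ factor) rather than deriving the two ranges separately from the first and second derivative tests; your version in fact avoids the logarithm, so the $\Delta^{o(1)}$ is not actually needed in your write-up.
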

\begin{proof}
Let 
\begin{align*}
S=\sum_{\substack{r,s=1 \\ |t_r-t_s|\le \Delta }}^{R}\gamma(t_r)\gamma(t_s)\left|\sum_{N\le n \le 2N} n^{-1/2+i(t_r-t_s)} \right|^2.
\end{align*}
By Lemma~\ref{lem:coefficients}, we have 
\begin{align}
\label{eq:SS12small}
S&\ll \frac{1}{N}\sum_{\substack{r,s=1 \\ |t_r-t_s|\le \Delta }}^{R}\gamma(t_r)\gamma(t_s)\left|\sum_{N\le n \le 2N} n^{i(t_r-t_s)} \right|^2 \\ 
&\ll \frac{1}{N}\left( S_1+S_2\right), \nonumber
\end{align}
where 
\begin{align*}
S_1=\sum_{\substack{r,s=1 \\ |t_r-t_s|< 1 }}^{R}\gamma(t_r)\gamma(t_s)\left|\sum_{1\le n \le N}n^{i(t_r-t_s)} \right|^2,
\end{align*}
and 
\begin{align*}
S_2=\sum_{\substack{r,s=1 \\ 1\le |t_r-t_s|\le \Delta }}^{R}\gamma(t_r)\gamma(t_s)\left|\sum_{1\le n \le N}n^{i(t_r-t_s)} \right|^2.
\end{align*}
We estimate the summation in $S_1$ trivially. Using the assumption the points $t_r$ are well spaced, we have 
\begin{align}
\label{eq:S1b1b1b1}
S_1\ll N\sum_{\substack{r,s=1 \\ |t_r-t_s|< 1 }}^{R}\gamma(t_r)\gamma(t_s)\ll N\|\gamma\|_2^2.
\end{align}
For $S_2$ we use the estimate
\begin{align*}
\sum_{N\le n \le 2N}n^{it}\ll \frac{N}{t}+t^{1/2}(\log{t}),
\end{align*}
valid for any $N,t\ge1$, see for example~\cite[Equation~9.21]{IwKo}. This gives 
\begin{align*}
S_2\ll N^2\sum_{\substack{r,s=1 \\ 1\le |t_r-t_s|\le \Delta }}^{R}\frac{\gamma(t_r)\gamma(t_s)}{(t_r-t_s)^2}+\Delta^{1+o(1)}\sum_{\substack{r,s=1 \\ 1\le |t_r-t_s|\le \Delta }}^{R}\gamma(t_r)\gamma(t_s),
\end{align*}
which by Lemma~\ref{lem:hilbert} implies
$$S_2\ll N^2\|\gamma\|_2^2+\Delta^{1+o(1)}I(\Delta,\gamma).$$
Combining the above with~\eqref{eq:SS12small} and~\eqref{eq:S1b1b1b1}
\begin{align*}
S\ll N\|\gamma\|_2^2+\frac{\Delta^{1+o(1)}}{N}I(\Delta,\gamma),
\end{align*}
and completes the proof.
\end{proof}
The following forms the basis of the van der Corput method of exponential sums, for a proof  see~\cite[Theorem~8.16]{IwKo}.
\begin{lemma}
\label{lem:vdc}
For any real valued function $f$ defined on an interval $[a,b]$ with derivatives satisfying
\begin{align*}
\frac{T}{N^2} \ll f''(z) \ll \frac{T}{N^2}, \quad |f^{(3)}(z)|\ll \frac{T}{N^3}, \quad |f^{(4)}(z)|\ll \frac{T}{N^4},\quad z\in[a,b],
\end{align*}
we have
\begin{align*}
\sum_{a<n<b}e(f(n))=\sum_{\alpha<m<\beta}f''(x_m)^{-1/2}e\left(f(x_m)-mx_m+\frac{1}{8} \right)+E,
\end{align*}
where $\alpha=f'(a), \beta=f'(b)$, $x_m$ is the unique solution to $f'(x)=m$ for $x\in [a,b]$ and
\begin{align*}
E&\ll \frac{N}{T^{1/2}}+\log(|f'(b)-f'(a)|+2).
\end{align*}
\end{lemma}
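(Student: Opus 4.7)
The plan is to derive this by truncated Poisson summation combined with a stationary phase evaluation of the resulting integrals, which is the standard route to the van der Corput $B$-process.

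First I would smooth the characteristic function of $(a,b)$ very slightly so that Poisson summation applies cleanly, or alternatively invoke the truncated form of Poisson summation due to the fact that $f'$ is monotone under the second derivative hypothesis. Writing
\[
\sum_{a<n<b}e(f(n))=\sum_{m\in\Z}\int_a^b e(f(x)-mx)\,dx+O(\log(|f'(b)-f'(a)|+2)),
\]
reduces the problem to analyzing the integrals $I_m=\int_a^b e(f(x)-mx)\,dx$. The logarithmic error absorbs the boundary effects from replacing the indicator of $(a,b)$ by its smoothing; the Poisson tail count $|f'(b)-f'(a)|+2$ controls how many $m$ contribute essentially.

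Next, I would split $m$ into two regimes. For $m\in[\alpha,\beta]$ (with $\alpha=f'(a)$, $\beta=f'(b)$), the phase $f(x)-mx$ has a unique stationary point $x_m\in[a,b]$ solving $f'(x_m)=m$; here I would apply the stationary phase expansion, using a smooth partition of unity near $x_m$, Taylor expanding $f$ to second order with remainder controlled by $f^{(3)}$ and $f^{(4)}$, and invoking the Fresnel integral $\int_{\R} e(\tfrac{1}{2}f''(x_m)(x-x_m)^2)\,dx=f''(x_m)^{-1/2}e(1/8)$ (using the sign of $f''$ from the lower bound). This gives the main term $f''(x_m)^{-1/2}e(f(x_m)-mx_m+1/8)$ with an error for each $m$ of size $O(N/T^{1/2})$ from the stationary phase remainder (the standard bound coming from the control on $f^{(3)}$ and $f^{(4)}$ relative to $f''$). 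For $m\notin[\alpha,\beta]$ (where $f'(x)-m$ is bounded away from zero on $[a,b]$), I would integrate by parts once or twice in $I_m$, gaining negative powers of $|f'(x)-m|$; summing over such $m$ gives a convergent contribution that is absorbed into the stated error.

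The main obstacle is the bookkeeping of the stationary-phase error: to get the crisp $N/T^{1/2}$ bound (rather than a term polynomial in $\log$) one has to compare the exact integral to the Gaussian model carefully, using Taylor's theorem with integral remainder and the hypotheses on $f^{(3)},f^{(4)}$ to control both the phase expansion and the amplitude factor $f''(x)^{-1/2}$ on the interval of length $\sim N T^{-1/2}$ around $x_m$ where the integrand is not highly oscillatory. Summing the endpoint contributions and the tails then gives the claimed $E\ll N/T^{1/2}+\log(|f'(b)-f'(a)|+2)$, completing the proof. Since this is exactly the content of \cite[Theorem~8.16]{IwKo}, I would in practice simply cite that reference rather than reproduce the lengthy calculation.
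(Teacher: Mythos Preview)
Your proposal is correct and matches the paper's treatment: the paper does not prove this lemma but simply cites \cite[Theorem~8.16]{IwKo}, exactly as you suggest doing in your final sentence. The sketch you give (truncated Poisson summation followed by stationary phase for the integrals with a critical point, and integration by parts for the rest) is the standard argument behind that reference, so there is nothing to add.
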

The following is a consequence of Lemma~\ref{lem:vdc} and partial summation.
\begin{lemma}
\label{lem:vdc1}
Let $N$ be an integer and $t\ge 1$ a real number. We have 
\begin{align*}
\left|\sum_{N\le n \le 2N}n^{it}\right|\ll \frac{N}{t^{1/2}}\max_{M\le t/N}\left|\sum_{t/2N\le n \le M}n^{it} \right|+\frac{N}{t^{1/2}}+\log{(Nt)}.
\end{align*}
\end{lemma}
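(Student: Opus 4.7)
The plan is to apply Lemma~\ref{lem:vdc} directly to the exponential sum and then remove an arising $1/k$ weight by partial summation. By passing to the complex conjugate, it suffices to estimate $\sum_{N \le n \le 2N} n^{-it} = \sum_{N \le n \le 2N} e(f(n))$ with $f(x) = -t\log x$. Then $f''(x) = t/x^2$, $|f^{(3)}(x)| \ll t/x^3$, $|f^{(4)}(x)| \ll t/x^4$, so on $[N, 2N]$ the derivative hypotheses of Lemma~\ref{lem:vdc} are satisfied with $T = t$, and $f'' > 0$ as required.

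The saddle-point equation $f'(x_m) = -t/x_m = m$ gives $x_m = -t/m$; writing $m = -k$ for positive $k$, the range $f'(N) < m < f'(2N)$ translates to $t/(2N) < k < t/N$ and $x_m = t/k \in [N,2N]$. I would then compute $f''(x_m)^{-1/2} = \sqrt{t}/k$ and
\begin{align*}
f(x_m) - m x_m = -t\log(t/k) + t = t\bigl(1 - \log t + \log k\bigr),
\end{align*}
so that $e(f(x_m) - m x_m) = e^{it(1-\log t)} \cdot k^{it}$, which is a unimodular constant (independent of $k$) times $k^{it}$. Substituting into the transform provided by Lemma~\ref{lem:vdc} and taking moduli, the main term becomes $\sqrt{t}\,|\sum_{t/(2N) < k < t/N} k^{it}/k|$, while the error contribution is $\ll N/\sqrt{t} + \log(|f'(b)-f'(a)|+2) \ll N/\sqrt{t} + \log(Nt)$.

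To finish, I would remove the $1/k$ weight by Abel summation. Writing $S(u) = \sum_{t/(2N) < k \le u} k^{it}$, one has
\begin{align*}
\sum_{t/(2N) < k < t/N} \frac{k^{it}}{k} = \frac{S(t/N)}{t/N} + \int_{t/(2N)}^{t/N} \frac{S(u)}{u^2}\, du,
\end{align*}
and bounding $|S(u)|$ by $\max_{M \le t/N}|S(M)|$ together with the telescoping $\int_{t/(2N)}^{t/N} u^{-2}\,du = 2N/t - N/t$ gives $|\sum k^{it}/k| \ll (N/t) \max_M |S(M)|$. Multiplying by the $\sqrt{t}$ prefactor produces the coefficient $N/\sqrt{t}$ and delivers the claimed inequality, after noting $|S(M)| = |\sum k^{-it}|$ equals $|\sum k^{it}|$ by conjugation.

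The only real subtlety is the bookkeeping of the stationary-phase factor: one must check that, up to a unimodular constant in $k$, the phase $f(x_m) - m x_m$ reduces exactly to $k^{it}$, so that the transformed sum is genuinely a weighted Dirichlet polynomial of the same shape as the one being estimated. Everything else, including the absorption of the van der Corput error term into $N/\sqrt{t} + \log(Nt)$ and the partial summation step, is routine.
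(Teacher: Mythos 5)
Your proposal is correct and follows exactly the route the paper intends (the paper itself gives no details, only the remark that the lemma is a consequence of Lemma~\ref{lem:vdc} and partial summation). Applying the van der Corput $B$-process to $\sum n^{-it}$ with $f(x)$ a constant multiple of $-t\log x$, observing that the stationary-phase factor $e(f(x_m)-mx_m+1/8)$ is a unimodular constant times $k^{it}$ and that $f''(x_m)^{-1/2}\asymp \sqrt{t}/k$, and then removing the $1/k$ weight by Abel summation over the dyadic block $k\in(t/2N,t/N]$ is precisely the argument; the $N/\sqrt{t}$ and $\log(Nt)$ contributions come from the error term of Lemma~\ref{lem:vdc} exactly as you say. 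The final remark about conjugating $S(M)$ is superfluous (the inner sum already appears as $k^{it}$), and the $2\pi$ normalisation in the range endpoints is glossed over, but both are cosmetic and consistent with the paper's own imprecision; neither affects the validity of the argument.
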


\begin{lemma}
\label{lem:main1}
Let $T$ be a real number, $1\le t_1,\dots,t_R\le T$ a well spaced sequence. For integer $N$ and a real number $\Delta$ we define
\begin{align*}
S^*(N,\Delta,\gamma)=\sum_{\substack{r,s=1 \\ \Delta\le |t_r-t_s|\le 2\Delta}}^{R}\gamma(t_r)\gamma(t_s)\left|\sum_{N\le n \le 2N}n^{-1/2+i(t_r-t_s)}\right|^2,
\end{align*}
If $\Delta \gg N$ then
\begin{align*}
S^{*}(N,\Delta)&\ll
N^{o(1)}\sum_{\substack{r,s=1 \\ |t_r-t_s|\le 2\Delta }}^{R}\gamma(t_r)\gamma(t_s)\left|\sum_{\Delta/2N\le n \le 2\Delta/N}n^{-1/2+i(t_r-t_s)} \right|^2 \\ & \quad \quad +N^{o(1)}I(\Delta,\gamma).
\end{align*}
\end{lemma}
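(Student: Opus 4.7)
The plan is to apply van der Corput's stationary-phase method (in the form of Lemma~\ref{lem:vdc1}) pair-by-pair to each inner polynomial $\sum_{N\le n\le 2N} n^{-1/2+i(t_r-t_s)}$. Since $\Delta\gg N$, for every pair $(r,s)$ with $\Delta\le|t_r-t_s|\le 2\Delta$ we have $|t_r-t_s|/N\gg 1$, so stationary phase reflects the sum of length $N$ into a dual sum of length $\sim|t_r-t_s|/N$ supported in $[\Delta/2N,2\Delta/N]$.

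Concretely, for fixed $t=t_r-t_s$ I first strip the $n^{-1/2}$ weight by partial summation,
\[
\left|\sum_{N\le n\le 2N} n^{-1/2+it}\right|\ll N^{-1/2}\max_{N\le M\le 2N}\left|\sum_{N\le n\le M} n^{it}\right|,
\]
and then apply Lemma~\ref{lem:vdc1} (whose proof adapts verbatim to the partial interval $[N,M]$) to each inner partial sum. This produces a dual sum $\sum_{m_1\le m\le m_2} m^{it}$ with $[m_1,m_2]\subseteq [t/2N,t/N]$, weighted by $N/t^{1/2}$, plus an error of size $N/t^{1/2}+\log(Nt)$. A second partial summation on the dual side (using that $m\sim t/N$ and $m^{it}=m^{1/2}\cdot m^{-1/2+it}$) restores the weight $m^{-1/2+it}$, and the factors telescope to give
\[
\left|\sum_{N\le n\le 2N} n^{-1/2+it}\right|\ll N^{o(1)}\max_{[m_1,m_2]\subseteq[\Delta/2N,2\Delta/N]}\left|\sum_{m_1\le m\le m_2} m^{-1/2+it}\right|+N^{o(1)}(N/t)^{1/2}.
\]

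Next, I remove the maximum over subintervals by a Fourier-based smoothing argument in the spirit of Lemma~\ref{lem:coefficients}: multiplying the indicator of $[m_1,m_2]$ by a nonnegative-Fourier-transform bump supported in a slight enlargement of $[\Delta/2N,2\Delta/N]$, one bounds the max by the single sum over $[\Delta/2N,2\Delta/N]$ at a cost of $N^{o(1)}$. I then square, multiply both sides by $\gamma(t_r)\gamma(t_s)$, and sum over pairs with $\Delta\le|t_r-t_s|\le 2\Delta$. The main term yields the first term on the right-hand side of the lemma. The squared error per pair is $\ll N/\Delta+N^{-1}\log^2T\ll N^{o(1)}$ since $\Delta\gg N$, and its total contribution is $\ll N^{o(1)}\sum_{|t_r-t_s|\le 2\Delta}\gamma(t_r)\gamma(t_s)\ll N^{o(1)}I(2\Delta,\gamma)\ll N^{o(1)}I(\Delta,\gamma)$, the last step following from an $\ell^2$ counting bound (grouping $\gamma$-mass into boxes of width $\Delta$ as in Lemma~\ref{lem:ell2counting}) which shows $I(2\Delta,\gamma)\ll I(\Delta,\gamma)$.

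The main obstacle is the removal of the maximum over subintervals: the Fourier-completion/smoothing step must be arranged so that the resulting sum is exactly over $[\Delta/2N,2\Delta/N]$ without losing more than polylog factors. The van der Corput application itself is routine, but the coefficient bookkeeping across the two partial summations must be done carefully to ensure that the weights balance to produce exactly $m^{-1/2}$ on the dual side, which is what makes the statement dimensionally clean.
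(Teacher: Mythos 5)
Your plan matches the paper's proof: both hinge on applying Lemma~\ref{lem:vdc1} pair-by-pair (valid since $\Delta\gg N$ forces the reflected sum into $[\Delta/2N,2\Delta/N]$), then removing the resulting maximum over pair-dependent subintervals by a Fourier-completion argument, with the van der Corput error term accounting for the $N^{o(1)}I(\Delta,\gamma)$ contribution. Two small remarks: the precise tool in the paper for the maximum-removal step is Lemma~\ref{lem:smoothsums}, not a direct appeal to Lemma~\ref{lem:coefficients}, and the mechanism there is not ``multiply the indicator of $[m_1,m_2]$ by a nonnegative bump'' (which would fail because of cancellation in the oscillatory sum) but rather a Fourier-expansion of the indicator followed by Cauchy--Schwarz in the dual variable, after which Lemma~\ref{lem:coefficients} applies; and for stripping and restoring the $n^{-1/2}$ weight the paper uses Lemma~\ref{lem:coefficients} uniformly over all pairs rather than pair-by-pair partial summation, which is cleaner because it avoids introducing an additional maximum over partial sums. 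Neither point changes the substance: you have identified the right decomposition and the right obstacle.
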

\begin{proof}
Let 
\begin{align*}
S_0^*(N,\Delta,\gamma)=\sum_{\substack{r,s=1 \\ \Delta\le |t_r-t_s|\le 2\Delta}}^{R}\gamma(t_r)\gamma(t_s)\left|\sum_{N\le n \le 2N}n^{i(t_r-t_s)}\right|^2,
\end{align*}
so that by Lemma~\ref{lem:coefficients}
\begin{align}
\label{eq:SS0starttttt}
S^*(N,\Delta,\gamma)\ll \frac{S_0^*(N,\Delta,\gamma)}{N}\ll S^*(N,\Delta,\gamma).
\end{align}
 For each pair $r,s$ satisfying $$\Delta \le |t_r-t_s|\le 2\Delta,$$ we define  $\Delta_{r,s}=|t_r-t_s|$. By Lemma~\ref{lem:vdc1} and the assumption $\Delta \gg N$
\begin{align*}
\left|\sum_{N\le n \le 2N}n^{i(t_r-t_s)}\right|\ll \frac{N}{\Delta^{1/2}}\max_{M\le \Delta/N}\left|\sum_{\Delta_{r,s}/2N\le n \le M}n^{i(t_r-t_s)} \right|+N^{1/2+o(1)},
\end{align*}
which after summing over $r,s$ gives
\begin{align*}
S_0^*(N,\Delta,\gamma)
&\ll \frac{N^2}{\Delta}\sum_{\substack{r,s=1 \\ \Delta\le |t_r-t_s|< 2\Delta}}^{R}\gamma(t_r)\gamma(t_s)\max_{M\le \Delta/N}\left|\sum_{\Delta_{r,s}/2N\le n \le M}n^{i(t_r-t_s)} \right|^2  \\ & \quad \quad \quad \quad +I(\Delta,\gamma)N^{1+o(1)}.
\end{align*}
We have
\begin{align*}
&\sum_{\substack{r,s=1 \\ \Delta\le |t_r-t_s|< 2\Delta}}^{R}\gamma(t_r)\gamma(t_s)\max_{M\le \Delta/N}\left|\sum_{\Delta_{r,s}/2N\le n \le M}n^{i(t_r-t_s)} \right|^2\le \\ & \quad \quad \quad  \sum_{\substack{\substack{r,s=1 \\ |t_r-t_s|\le 2 \Delta}}}^{R}\gamma(t_r)\gamma(t_s)\max_{M\le \Delta/N}\left|\sum_{\Delta_{r,s}/2N\le n \le M}n^{i(t_r-t_s)} \right|^2,
\end{align*}
and hence by Lemma~\ref{lem:smoothsums} 
\begin{align*}
S_0^{*}(N,\Delta,\gamma)&\ll
\frac{N^{2+o(1)}}{\Delta}\sum_{\substack{r,s=1 \\ |t_r-t_s|\le 2\Delta }}^{R}\gamma(t_r)\gamma(t_s)\left|\sum_{\Delta/2N\le n \le 2\Delta/N}n^{i(t_r-t_s)} \right|^2 \\ & \quad \quad \quad+I(\Delta,\gamma)N^{1+o(1)}.
\end{align*}
Using~\eqref{eq:SS0starttttt} and a second application of Lemma~\ref{lem:coefficients} we get 
\begin{align*}
S^{*}(N,\Delta,\gamma)&\ll
N^{o(1)}\sum_{\substack{r,s=1 \\ |t_r-t_s|\le 2\Delta }}^{R}\gamma(t_r)\gamma(t_s)\left|\sum_{\Delta/2N\le n \le 2\Delta/N}n^{-1/2+i(t_r-t_s)} \right|^2 \\ & \quad \quad \quad+I(\Delta,\gamma)N^{o(1)},
\end{align*}
and the result follows after splitting summation over $n$ into dyadic ranges and applying Corollary~\ref{cor:larger}.
\end{proof}

\begin{cor}
\label{cor:reflection}
Let $T$ be a real number, $1\le t_1,\dots,t_R\le T$ a well spaced sequence and $\gamma(t)$ a sequence of positive real numbers. We have 
\begin{align*}
S(N,\Delta,\gamma)\ll N^{o(1)}S\left(\frac{4\Delta}{N},\Delta,\gamma\right)+N^{o(1)}I(\Delta,\gamma)+\|\gamma\|_2N^{1+o(1)}.
\end{align*}
\end{cor}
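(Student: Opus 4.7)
The plan is to split the outer $(r,s)$-summation in $S(N,\Delta,\gamma)$ into a ``short difference'' part with $|t_r-t_s|\ll N$ and a ``long difference'' part with $|t_r-t_s|\gg N$, and to treat the two using Lemma~\ref{lem:mvSmall} and Lemma~\ref{lem:main1} respectively. Concretely, I would write
\begin{align*}
S(N,\Delta,\gamma)\ll S_{\rm sh}+\sum_{j}S^{*}(N,\Delta_{j},\gamma),
\end{align*}
where $S_{\rm sh}$ collects pairs with $|t_r-t_s|\le CN$ for a suitable absolute constant $C$, and the sum over $j$ runs over the $O(\log T)=N^{o(1)}$ dyadic scales $\Delta_{j}\in[CN,\Delta/2]$ appearing in Lemma~\ref{lem:main1}.

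The short part is controlled directly by Lemma~\ref{lem:mvSmall} applied with $\Delta$ replaced by $CN$, yielding
\begin{align*}
S_{\rm sh}\ll N\|\gamma\|_{2}^{2}+\frac{(CN)^{1+o(1)}}{N}I(CN,\gamma)\ll N^{1+o(1)}\|\gamma\|_{2}^{2}+N^{o(1)}I(\Delta,\gamma),
\end{align*}
which delivers the last two terms of the bound. For each long-difference dyadic piece, Lemma~\ref{lem:main1} gives
\begin{align*}
S^{*}(N,\Delta_{j},\gamma)\ll N^{o(1)}U_{j}+N^{o(1)}I(\Delta_{j},\gamma),
\end{align*}
where $U_{j}$ is the same expression as $S^{*}$ but with the $n$-variable replaced by the shorter range $[\Delta_{j}/2N,\,2\Delta_{j}/N]$. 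Since $\Delta_{j}\le\Delta$, the constraint $|t_r-t_s|\le 2\Delta_{j}$ inside $U_{j}$ can be harmlessly weakened to $|t_r-t_s|\le \Delta$.

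The crucial step is then to unify the ranges $[\Delta_{j}/2N,\,2\Delta_{j}/N]$ coming from different $j$ into the single range $[4\Delta/N,\,8\Delta/N]$ appearing in the target $S(4\Delta/N,\Delta,\gamma)$. I would split each $[\Delta_{j}/2N,\,2\Delta_{j}/N]$ into the two dyadic blocks $[\Delta_{j}/2N,\,\Delta_{j}/N]$ and $[\Delta_{j}/N,\,2\Delta_{j}/N]$ and apply Corollary~\ref{cor:larger} to each, enlarging them up to $[4\Delta/N,\,8\Delta/N]$ with only an $N^{o(1)}$ loss. This produces $U_{j}\ll N^{o(1)}S(4\Delta/N,\Delta,\gamma)$ for every $j$, and summing the $N^{o(1)}$ dyadic contributions together with $\sum_{j}I(\Delta_{j},\gamma)\ll N^{o(1)}I(\Delta,\gamma)$ completes the proof.

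The main technical bookkeeping will be verifying that all three constraints hold uniformly in $j$: the lower bound $\Delta_{j}\gg N$ needed by Lemma~\ref{lem:main1}, the doubling condition $4\Delta/N\ge 2\cdot 2\Delta_{j}/N$ needed to apply Corollary~\ref{cor:larger} to the upper dyadic sub-block, and $2\Delta_{j}\le\Delta$ so that the $(r,s)$-constraint inside $U_{j}$ embeds into the one in $S(\cdot,\Delta,\gamma)$. All three follow from $CN\le\Delta_{j}\le\Delta/2$, so the argument is essentially a routine dyadic assembly of the two preceding lemmas with Corollary~\ref{cor:larger}.
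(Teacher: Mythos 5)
Your argument follows the paper's proof essentially step for step: reduce to the case $\Delta\gg N$ via Lemma~\ref{lem:mvSmall}, split into a short-difference piece (again Lemma~\ref{lem:mvSmall}) and a long-difference piece handled by dyadic decomposition, apply Lemma~\ref{lem:main1} to each dyadic scale, and then use Corollary~\ref{cor:larger} to push every resulting Dirichlet range up to the common length $4\Delta/N$. The only cosmetic difference is that you sum over all $O(\log T)$ dyadic scales while the paper pigeonholes to a worst one; both cost $N^{o(1)}$, and the factor-of-two slippage in the $|t_r-t_s|$ constraint (your claim $\Delta_j\le\Delta/2$ versus the true upper bound $\Delta_j\le\Delta$) is exactly the same imprecision present in the paper's own bookkeeping.
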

\begin{proof}
We may suppose $\Delta \gg N$ since otherwise the result follows from Lemma~\ref{lem:mvSmall}. With notation as in Lemma~\ref{lem:main1}, we have 
\begin{align*}
\nonumber S(N,\Delta,\gamma)&\le \sum_{\substack{r,s=1 \\  |t_r-t_s|\le 10N}}^{R}\gamma(t_r)\gamma(t_s)\left|\sum_{N\le n \le 2N}n^{-1/2+i(t_r-t_s)}\right|^2 \\ &  \quad \quad \quad +\sum_{\substack{r,s=1 \\  |t_r-t_s|>  10N}}^{R}\gamma(t_r)\gamma(t_s)\left|\sum_{N\le n \le 2N}n^{-1/2+i(t_r-t_s)}\right|^2,
\end{align*}
and hence 
\begin{align*}
S(N,\Delta,\gamma)\ll\sum_{\substack{r,s=1 \\  |t_r-t_s|\le N}}^{R}\gamma(t_r)\gamma(t_s)\left|\sum_{N\le n \le 2N}n^{-1/2+i(t_r-t_s)}\right|^2+(\log{\Delta})S^{*}(N,\Delta_1,\gamma),
\end{align*}
for some 
\begin{align*}
10N\le \Delta_1\le 2\Delta.
\end{align*}
By Lemma~\ref{lem:mvSmall}
\begin{align}
\label{eq:cormain2}
\nonumber \sum_{\substack{r,s=1 \\  |t_r-t_s|\le N}}^{R}\gamma(t_r)\gamma(t_s)\left|\sum_{N\le n \le 2N}n^{-1/2+i(t_r-t_s)}\right|^2&\ll \left(\|\gamma\|_2^2N+I(10N,\gamma)\right)N^{o(1)} \\ 
&\ll \left(\|\gamma\|_2^2N+I(\Delta,\gamma)\right)N^{o(1)},
\end{align}
and by Corollary~\ref{cor:larger} and Lemma~\ref{lem:main1}
\begin{align*}
S^{*}(N,\Delta_1)& \ll  N^{o(1)}S\left(\frac{2\Delta_1}{N},\Delta_1 \right)+I(\Delta_1,\gamma) \\ &\ll N^{o(1)}S\left(\frac{4\Delta}{N},\Delta \right)+I(2\Delta,\gamma),
\end{align*}
from which the desired result follows after combining with Lemma~\ref{lem:coefficients} and~\eqref{eq:cormain2}.
\end{proof}

\section{Proof of Theorem~\ref{thm:largeadditive}}
First consider when
\begin{align}
\label{eq:laNub1}
N\ge \frac{\Delta^{2/3}}{100}.
\end{align}
 By Corollary~\ref{cor:reflection}
\begin{align*}
S(N,\Delta,\gamma)\ll N^{o(1)}S\left(\frac{4\Delta}{N},\Delta,\gamma \right)+(I(\Delta,\gamma)+N\|\gamma\|_2^2)N^{o(1)},
\end{align*}
and by Lemma~\ref{lem:square}
\begin{align}
\label{eq:SNr}
S(N,\Delta,\gamma)\ll N^{o(1)}I(\Delta,\gamma)^{1/2}S\left(\frac{100\Delta^2}{N^2},\Delta,\gamma \right)^{1/2}+(I(\Delta,\gamma)+\|\gamma\|_2^2)N^{o(1)}.
\end{align}
By~\eqref{eq:laNub1} 
\begin{align}
\frac{100\Delta^2}{N^2}\le \frac{N}{2},
\end{align}
and hence by Corollary~\ref{cor:larger}
\begin{align}
\label{eq:SNcase1}
S(N,\Delta,\gamma)\ll N^{o(1)}I(\Delta,\gamma)^{1/2}S\left(N,\Delta,\gamma \right)^{1/2}+(I(\Delta,\gamma)+N\|\gamma\|_2^2)N^{o(1)},
\end{align}
which implies 
\begin{align}
\label{eq:SNcase11}
S(N,\Delta,\gamma)\ll (I(\Delta,\gamma)+N\|\gamma\|_2^2)N^{o(1)}. 
\end{align}
If~\eqref{eq:laNub1} does not hold then 
\begin{align*}
\frac{\Delta^2}{N^2}\gg \Delta^{2/3},
\end{align*}
and hence by Corollary~\ref{cor:larger} and~\eqref{eq:SNcase11}
\begin{align*}
S\left(\frac{100\Delta^2}{N^2},\Delta,\gamma \right)\ll \left(I(\Delta,\gamma)+\frac{\Delta^2}{N^2}\|\gamma\|_2^2\right)N^{o(1)}.
\end{align*}
Substituting into~\eqref{eq:SNcase1} gives 
\begin{align}
\label{eq:SNprelimF}
S(N,\Delta,\gamma)\ll \left(I(\Delta,\gamma)+\frac{\Delta}{N}I(\Delta,\gamma)^{1/2}\|\gamma\|_2+N\|\gamma\|_2^2\right)N^{o(1)}.
\end{align}
Let 
\begin{align}
\label{eq:Udef154}
U=\max \left\{2,\left(\frac{\Delta I(\Delta,\gamma)^{1/2}}{N^2\|\gamma\|_2}\right)^{1/2}  \right\}.
\end{align}
By Corollary~\ref{cor:larger} and~\eqref{eq:SNprelimF}
\begin{align}
\label{eq:SNprelimF1}
S(N,\Delta,\gamma) & \ll N^{o(1)}S(UN,\Delta,\gamma) \nonumber \\ 
&\ll (I(\Delta,\gamma)+\frac{\Delta}{NU}I(\Delta,\gamma)^{1/2}\|\gamma\|_2+NU\|\gamma\|_2^2)N^{o(1)}.
\end{align}
Using either~\eqref{eq:SNprelimF} or~\eqref{eq:SNprelimF1} depending of which term in~\eqref{eq:Udef154} is maximum
gives the bound 
\begin{align*}
S(N,\gamma)\ll (I(\Delta,\gamma)+N\|\gamma\|_2^2+\Delta^{1/2}I(\Delta,\gamma)^{1/4}\|\gamma\|_2^{3/2})N^{o(1)},
\end{align*}
which completes the proof.
\section{Proof of Theorem~\ref{thm:largeadditive1}}
Let 
$$S=\sum_{\substack{t_i\in \cA }}\left|\sum_{N\le n\le 2N}a_nn^{-1/2+i(t_1+\dots - t_{2k})} \right|^2.$$
For each integer $|\ell|\ll T$ we define
\begin{align*}
\gamma(\ell)=|\{ (t_1,\dots,t_k)\in \cA^k \ : \  \ell<t_1+\dots +t_k\le \ell+1\}|,
\end{align*}
and note that 
\begin{align}
\label{eq:gammaell1}
|\cA|^{2k}\ll I(T,\gamma)=\|\gamma\|^2_1\ll |\cA|^{2k},
\end{align}
and 
\begin{align}
\label{eq:gammaell2}
T_{2k}(\cA)\ll \|\gamma\|_2^2\ll T_{2k}(\cA).
\end{align}
We have
\begin{align*}
&S\le \sum_{\substack{|\ell_1|,|\ell_2|\le T \\ |\ell_1-\ell_2|\le 2\Delta}}\gamma(\ell_1)\gamma(\ell_2)\max_{|\theta|\le 2}\left|\sum_{N\le n\le 2N}a_nn^{-1/2+i(\ell_1-\ell_2)+i\theta} \right|^2,
\end{align*}
which combined with Lemma~\ref{lem:removemax} gives
\begin{align*}
S& \ll N^{o(1)}\int_{|\tau|\le 2\log{N}}\sum_{\substack{|\ell_1|,|\ell_2|\le T \\ |\ell_1-\ell_2|\le 2\Delta}}\gamma(\ell_1)\gamma(\ell_2)\left|\sum_{N\le n\le 2N}a_nn^{-1/2+i(\ell_1-\ell_2)+i\tau} \right|^2d\tau.
\end{align*}
Taking a maximum over $\tau$ this implies that 
\begin{align}
\label{eq:Sle1}
S\ll N^{o(1)}\sum_{\substack{|\ell_1|,|\ell_2|\le T \\ |\ell_1-\ell_2|\le 2\Delta}}\gamma(\ell_1)\gamma(\ell_2)\left|\sum_{N\le n\le 2N}a_nn^{i\tau_0}n^{-1/2+i(\ell_1-\ell_2)} \right|^2,
\end{align}
for some $\tau_0\le 2\log{N}.$ If $N\ge T^{2/3}$ then by Theorem~\ref{thm:largeadditive} 
\begin{align*}
S\ll (|\cA|^{2k}+NT_k(\cA)+T^{1/2}|\cA|^{k/2}T_{k}(\cA)^{3/4})N^{o(1)}.
\end{align*}
If  $N\le T^{2/3}$ then we have 
\begin{align*}
S\ll (|\cA|^{2k}+NT_k(\cA)+T^{1/2}|\cA|^{k/2}T_{k}(\cA)^{3/4})N^{o(1)},
\end{align*}
and the result follows noting that the conditions~\eqref{eq:Ndeltacond}  imply
\begin{align*}
T^{1/2}|\cA|^{k/2}T_{k}(\cA)^{3/4}\ll |\cA|^{2k}.
\end{align*}

\section{Large values of Dirichlet Polynomials}
In this section we state some reductions from large values of Dirichlet polynomials to various mean values which are slight modifications of well known results.  We first recall ~\cite[Lemma~1]{Jut}.
\begin{lemma}
\label{lem:jut}
Suppose  $T$ and $N$ satisfy $N\le T$. Let $h=(\log{T})^2$ and define
$$b(n)=e^{-(n/2N)^{h}}-e^{-(n/N)^h}.$$
Then for  any $t$ and $M$ satisfying
$$\frac{t}{N}\ll M\le T^2, \quad h^2\le t \le T,$$
we have
\begin{align*}
\sum_{n=1}^{\infty}b(n)n^{-it}\ll N^{1/2}\int_{|\tau|\le h^2}\left|\sum_{n=1}^{M}n^{-1/2+i(t+\tau)}\right|d\tau+1.
\end{align*}
\end{lemma}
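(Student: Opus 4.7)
The plan is to convert the LHS into a contour integral by Mellin inversion against $\zeta$, shift the contour onto the critical line, and replace $\zeta$ by the desired Dirichlet polynomial via the approximate functional equation. The hypotheses $t\ge h^2$ and $M\gg t/N$ enter respectively to control the residue and tail contributions and to keep the reflected sum under control.

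Computing the Mellin transform via the substitution $y=(x/A)^h$ in each of the two exponentials, one finds
$$\tilde b(s)=\int_0^\infty b(x)x^{s-1}\,dx=\frac{(2^s-1)N^s}{h}\,\Gamma\!\left(\frac{s}{h}\right),$$
which is holomorphic in $\Re(s)>-h$ since the zero of $2^s-1$ at $s=0$ cancels the pole of $\Gamma(s/h)$ there. Mellin inversion then yields
$$\sum_{n=1}^\infty b(n)n^{-it}=\frac{1}{2\pi i}\int_{(2)}\tilde b(s)\,\zeta(s+it)\,ds.$$
I would then shift to $\Re(s)=1/2$, picking up the residue $\tilde b(1-it)$ at the simple pole of $\zeta$ at $s=1-it$; Stirling's estimate $|\Gamma(\sigma+i\tau)|\ll e^{-\pi|\tau|/2}$ together with $t\ge h^2$ and $h=(\log T)^2$ shows this residue is $\ll N\exp(-\pi h/2)\ll 1$. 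The same decay of $\Gamma(s/h)$ makes the tail of the line integral for $|\Im(s)|>h^2$ contribute $O(1)$, and on the retained segment one has $|\tilde b(1/2+i\tau)|\ll N^{1/2}$.

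At this point the task reduces to bounding $\int_{|\tau|\le h^2}|\zeta(1/2+i(t+\tau))|\,d\tau$ by the claimed integral of the partial sum. I would use the approximate functional equation
$$\zeta(1/2+iu)=\sum_{n\le M}n^{-1/2-iu}+\chi(1/2+iu)\sum_{n\le u/(2\pi M)}n^{-1/2+iu}+O(M^{-1/2}),$$
with $|\chi(1/2+iu)|=1$. The first partial sum matches the desired integrand, since $|\sum n^{-1/2-iu}|=|\sum n^{-1/2+iu}|$. The hypothesis $M\gg t/N$ limits the length of the reflected sum to a constant times $N$, so that an integration-by-parts argument against the smooth weight $\tilde b(1/2+i\tau)$, exploiting the non-vanishing phase derivative $\log((t+\tau)/(2\pi))$ of $\chi$, keeps its total contribution at $O(1)$. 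The remaining error (including the $O(M^{-1/2})$ term and any convexity-type contribution along the contour) is similarly absorbed using the upper bound $M\le T^2$.

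The main obstacle is the reflected partial sum: its trivial bound is $N^{1/2}$, which after multiplication by $N^{1/2}$ and integration over $|\tau|\le h^2$ would give $Nh^2$, far larger than the $O(1)$ allowance in the lemma. Saving the necessary factor requires exploiting the rapid oscillation of $\chi(1/2+i(t+\tau))$, whose phase derivative is $\asymp\log(t/2\pi)$, against the smoothness of $\tilde b$ on scale $h$, and this is precisely where the lower bound $M\gg t/N$ is needed---it keeps the reflected sum short enough for this stationary-phase argument to succeed.
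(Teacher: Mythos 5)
The paper does not prove this lemma; it is quoted directly as~\cite[Lemma~1]{Jut}, so there is no internal argument to compare against, and your proposal must stand on its own.

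Your overall route (Mellin inversion, the correct evaluation $\tilde b(s)=\frac{(2^s-1)N^s}{h}\Gamma(s/h)$, shift to $\Re s=1/2$, $O(1)$ residue and tails via the decay of $\Gamma(\cdot/h)$, then the approximate functional equation) is the natural one and the first half is sound. But there is a genuine gap in how you handle the reflected sum, and the proposal as written is internally inconsistent at exactly this point. You first reduce to bounding $\int_{|\tau|\le h^2}|\zeta(\tfrac12+i(t+\tau))|\,d\tau$; once you have taken the absolute value of $\zeta$ pointwise you have thrown away all oscillation, and the reflected term $|\chi|\cdot|\sum_{n\le y}n^{-1/2+iu}|$ can genuinely be as large as $y^{1/2}\asymp N^{1/2}$ for $u$ in the integration range, contributing $\gg N h^2$ after multiplying by $N^{1/2}$ --- nothing to integrate by parts against remains. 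To integrate by parts you must stay with the signed integral $\int \tilde b(\tfrac12+iv)\,\zeta(\tfrac12+i(t+v))\,dv$, and there the phase you identify is wrong: $\tilde b(\tfrac12+iv)$ itself carries the rapidly varying phase $v\log N$ (from $N^{1/2+iv}$), and each term $n^{-1/2+i(t+v)}$ carries $(t+v)\log n$. The total phase derivative in $v$ is therefore $\log N-\log\frac{t+v}{2\pi}+\log n=\log\frac{2\pi Nn}{t+v}$, not $\log\frac{t+\tau}{2\pi}$. This \emph{does} vanish, at $n\approx(t+v)/(2\pi N)$, and that $n$ lies inside the reflected range $n\le (t+v)/(2\pi M)$ precisely when $M\le N$, which the hypothesis $t/N\ll M\le T^2$ permits. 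A stationary-phase evaluation of that single term gives a contribution of order $N^{1/2}\cdot(N/t)^{1/2}\cdot t^{1/2}=N$, not $O(1)$. So the assertion that the reflected sum is absorbed into the ``$+1$'' cannot be correct as stated; that contribution must instead be matched against the $N^{1/2}\int|\sum_{n\le M}\cdots|$ term on the right (which is indeed of the right size), and doing that requires a genuinely careful stationary-phase comparison --- essentially redoing the B-process/Poisson transformation of the smooth sum $\sum b(n)n^{-it}$ --- rather than a non-stationary integration by parts. Until the reflected-term analysis is replaced by an argument that tracks the stationary point and compares it to the main term, the proof is incomplete.
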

The following is a consequence of Mellin inversion, see for example~\cite[Equation~4.17]{Bou}.
\begin{lemma}
\label{lem:jut1}
Let $N,T\gg 1$ and define 
\begin{align*}
c_n=e^{-n/2N}-e^{-n/N}, \quad h=(\log{T})^2.
\end{align*}
For any $t$ satisfying 
$$h\le t \le T,$$
we have 
\begin{align*}
\sum_{n=0}^{\infty}c_n n^{-\sigma-it}\ll N^{o(1)}\left(\int_{-h}^{h}\left|\zeta\left(\sigma+i(t+\tau)\right) \right|d\tau+1\right).
\end{align*}
\end{lemma}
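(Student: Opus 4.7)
The proof plan is to carry out a standard Mellin inversion and shift the contour onto the critical line $\sigma$.

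Starting point: since $c_n = e^{-n/(2N)} - e^{-n/N}$, the identity
$$e^{-x} = \frac{1}{2\pi i}\int_{(c)} \Gamma(s) x^{-s}\, ds \qquad (c>0)$$
yields, after summing a geometric-type series in $n$,
$$\sum_{n=1}^{\infty} c_n n^{-\sigma-it} = \frac{1}{2\pi i}\int_{(c)} \Gamma(s)\, N^s\,(2^s-1)\,\zeta(s+\sigma+it)\, ds,$$
valid for $c$ large enough that the series and integral converge absolutely (e.g.\ $c=2$).

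Next I would shift the contour from $\Re s = c$ to $\Re s = 0$. The integrand has potential poles from $\Gamma(s)$ at $s=0,-1,-2,\dots$, and from $\zeta(s+\sigma+it)$ at $s=1-\sigma-it$. The pole of $\Gamma$ at $s=0$ is cancelled by the simple zero of $2^s-1$ there, so no residue arises. Only the simple pole of $\zeta$ at $s=1-\sigma-it$ is crossed, and its residue is
$$\Gamma(1-\sigma-it)\, N^{1-\sigma-it}\,(2^{1-\sigma-it}-1).$$
By Stirling, $|\Gamma(1-\sigma-it)|\ll e^{-\pi t/2}|t|^{1/2-\sigma}$. Since $t\ge h=(\log T)^2$ and $N\le T^{O(1)}$, the factor $e^{-\pi t/2}$ dominates $N^{1-\sigma}$ and the residue is $\ll 1$.

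On $\Re s = 0$, writing $s = i\tau$, the remaining integral is
$$\frac{1}{2\pi}\int_{-\infty}^{\infty}\Gamma(i\tau)\,(2^{i\tau}-1)\,N^{i\tau}\,\zeta(\sigma+i(t+\tau))\, d\tau.$$
The key analytic input is that $\Gamma(i\tau)(2^{i\tau}-1)$ is a bounded, continuous function of $\tau$ (the singularity at $\tau=0$ cancels), and satisfies $|\Gamma(i\tau)(2^{i\tau}-1)|\ll e^{-\pi|\tau|/2}$ for $|\tau|\ge 1$ by Stirling. Truncating at $|\tau|=h$ costs only a negligible error because for $|\tau|>h$ one has $e^{-\pi|\tau|/2}\ll T^{-A}$ for any $A$, while trivially $|\zeta(\sigma+i(t+\tau))|\ll (T+|\tau|)^{O(1)}$. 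The bounded range $|\tau|\le h$ contributes $\ll \int_{-h}^{h}|\zeta(\sigma+i(t+\tau))|\, d\tau$ and the residue contributes $O(1)$, giving the claimed estimate (the slack $N^{o(1)}$ absorbs any logarithmic constants from the kernel).

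The only mild obstacle is confirming that the Gamma/Mellin kernel $\Gamma(i\tau)(2^{i\tau}-1)$ is genuinely bounded near $\tau=0$ (so the contour integral on $\Re s = 0$ is absolutely convergent) and has exponential decay at infinity; both follow from standard Stirling estimates. Everything else is bookkeeping of residues and the assumption $h\le t\le T$ to force $e^{-\pi t/2}$ to kill the single pole that is crossed.
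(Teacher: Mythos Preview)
Your argument is correct and is precisely the Mellin inversion computation that the paper alludes to without detail (the paper simply states the lemma as ``a consequence of Mellin inversion'' and cites \cite[Equation~4.17]{Bou}). Your handling of the cancelled pole at $s=0$, the exponentially small residue at $s=1-\sigma-it$, and the Stirling-based truncation at $|\tau|=h$ are all standard and sound.
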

\begin{lemma}
\label{lem:mainvlarge1}
Let $N,T\ge 1$ and $a_n$ a sequence of complex numbers satisfying $|a_n|\le 1.$ Let $\cA\subseteq [0,T]$ be a well spaced set satisfying 
\begin{align*}
\left|\sum_{N\le n\le 2N}a_n n^{it}\right|\ge V, \quad t\in \cA.
\end{align*}
Let $0<\delta<1$ and suppose $N,T,V$ are positive numbers with $N$ and $V$  satisfying 
\begin{align}
\label{eq:NTVcond}
V^{4}\ge N^{3+o(1)}.
\end{align}
We have 
\begin{align*}
& I(\delta T,\cA) \ll  \frac{N^{2+o(1)}}{V^2}|\cA|+\frac{N^{3/2+o(1)}}{V^2}\sum_{\substack{t_1,t_2\in \cA \\ |t_1-t_2|\le \delta T}}\left|\sum_{n\le \delta T/N}n^{-1/2+i(t_1-t_2)}\right|.
\end{align*}
\end{lemma}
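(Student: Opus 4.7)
The proof combines a Hal\'asz--Montgomery large-sieve argument with a van der Corput-style reflection that shortens the Dirichlet polynomial, followed by an error-absorption step exploiting the hypothesis $V^4 \ge N^{3+o(1)}$.

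The first step reduces the claim to a bound on a first-moment of $|\sum_{N\le n\le 2N} n^{i(t_1-t_2)}|$ over close pairs. Writing $f(t)=\sum_{N\le n\le 2N} a_n n^{it}$, one begins from $V^2 \le |f(t_1)||f(t_2)|$ on $\cA\times\cA$, factors $|f(t)|=\varepsilon(t)f(t)$ with $|\varepsilon(t)|=1$, and bounds the indicator $\mathbf{1}(|t_1-t_2|\le \delta T)$ by a nonnegative smooth majorant $\phi$ with $\hat\phi\ge 0$ supported near $0$. Fourier-expanding $\phi$ and applying Cauchy--Schwarz in the Dirichlet variable $n\in[N,2N]$ to eliminate $a_n$ yields
\[
V^2 I(\delta T,\cA) \ll N \sum_{\substack{t_1,t_2\in\cA\\|t_1-t_2|\le \delta T}}\left|\sum_{N\le n\le 2N} n^{i(t_1-t_2)}\right|.
\]

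Next, split off the diagonal $t_1=t_2$, where the inner sum is $N+O(1)$; its contribution of $N^2|\cA|$ yields the first term in the claimed bound. For $t=|t_1-t_2|\ge 1$, apply Lemma~\ref{lem:vdc1}, which gives
\[
\left|\sum_{N\le n\le 2N}n^{it}\right| \ll \frac{N}{t^{1/2}}\max_{M\le t/N}\left|\sum_{t/2N\le n\le M}n^{it}\right| + \frac{N}{t^{1/2}} + \log(Nt),
\]
and convert the inner $n^{it}$ sum into an $n^{-1/2+it}$ sum via Abel summation, at the cost of a factor $\sqrt{M}\le (t/N)^{1/2}$; the two prefactors combine as $(N/t^{1/2})(t/N)^{1/2}=N^{1/2}$. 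A dyadic decomposition of $n\le \delta T/N$ combined with Lemma~\ref{lem:removemax} and Lemma~\ref{lem:coefficients} removes the maximum over partial sums and pins the range to $n\le \delta T/N$, at the cost of a factor $N^{o(1)}$. Summing over off-diagonal pairs then produces the target main term
\[
N^{3/2+o(1)}\sum_{\substack{t_1,t_2\in\cA\\|t_1-t_2|\le \delta T}}\left|\sum_{n\le \delta T/N} n^{-1/2+i(t_1-t_2)}\right|.
\]

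It remains to absorb the error terms from Lemma~\ref{lem:vdc1}. The $\log(Nt)$ error contributes $N^{1+o(1)} I(\delta T,\cA)$, absorbed into the left side using $V^2 \ge N^{1+o(1)}$, itself a consequence of $V^4 \ge N^{3+o(1)}$. The $N/t^{1/2}$ error is the delicate one: sorting the neighbors of each $t_1\in\cA$ by distance and invoking well-spacing yields $\sum_{t_2\ne t_1,|t_1-t_2|\le \delta T}|t_1-t_2|^{-1/2}\ll r_{\delta T}(t_1)^{1/2}$, and Cauchy--Schwarz over $t_1$ gives $\sum_{\text{pairs}}|t_1-t_2|^{-1/2}\ll |\cA|^{1/2} I(\delta T,\cA)^{1/2}$. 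The resulting error $N^2|\cA|^{1/2} I(\delta T,\cA)^{1/2}$ is absorbed back into $V^2 I(\delta T,\cA)$ via AM--GM, with $V^4 \ge N^{3+o(1)}$ being precisely the borderline under which this absorption closes. The main obstacle is controlling this last error: a naive bound on $\sum_{\text{pairs}}|t_1-t_2|^{-1/2}$ by $(\delta T)^{1/2}|\cA|$ would be catastrophic, and the sharper $|\cA|^{1/2}I^{1/2}$ estimate hinges on exploiting the well-spacing of $\cA$; a secondary technicality is the clean passage from the partial-sum max in Lemma~\ref{lem:vdc1} to the fixed range $n\le \delta T/N$, which requires careful dyadic bookkeeping.
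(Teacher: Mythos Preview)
Your overall strategy---Hal\'asz--Montgomery followed by a reflection to shorten the inner Dirichlet sum---matches the paper's. The first step is fine: your Fourier-majorant argument is an alternative packaging of the paper's subdivision into intervals $\cA_k=\cA\cap[k\delta T,(k+1)\delta T]$, each treated by Hal\'asz--Montgomery and summed via Lemma~\ref{lem:ell2counting}. Either route gives
\[
V^2 I(\delta T,\cA)\ll N^{1+o(1)}\sum_{\substack{t_1,t_2\in\cA\\|t_1-t_2|\le\delta T}}\Bigl|\sum_{N\le n\le 2N}n^{i(t_1-t_2)}\Bigr|.
\]

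The gap is in your absorption of the $N/t^{1/2}$ error from Lemma~\ref{lem:vdc1}. Your bound $\sum_{\text{pairs}}|t_1-t_2|^{-1/2}\ll|\cA|^{1/2}I(\delta T,\cA)^{1/2}$ is correct, but the resulting error $N^2|\cA|^{1/2}I^{1/2}$ does \emph{not} absorb under $V^4\ge N^{3+o(1)}$: any weighted AM--GM against $V^2I$ leaves a residual of size $N^4|\cA|/V^2$, which exceeds the target $N^{2+o(1)}|\cA|/V^2$ by a full factor of $N^2$. To close this way you would need $V\gg N$, which is never available.

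The paper avoids this by replacing Lemma~\ref{lem:vdc1} with Jutila's Lemma~\ref{lem:jut}, applied to the smooth weight $b(n)=e^{-(n/2N)^h}-e^{-(n/N)^h}$. That lemma gives the reflected sum with error $O(1)$ rather than $N/t^{1/2}$, so the residual is only $N^{3/2+o(1)}I(\delta T,\cA)$, which absorbs into $V^2I$ precisely when $V^2\ge N^{3/2+o(1)}$, i.e.\ $V^4\ge N^{3+o(1)}$. This is where the hypothesis is actually spent.

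Your route via Lemma~\ref{lem:vdc1} can be repaired, but not by the $|\cA|^{1/2}I^{1/2}$ device. Instead split the off-diagonal by $|t_1-t_2|\lessgtr N$. For $1\le|t_1-t_2|\le N$ use the elementary bound $\bigl|\sum_{N\le n\le 2N}n^{it}\bigr|\ll N/|t|$; well-spacing gives $\sum_{t_2}|t_1-t_2|^{-1}\ll\log N$, so this range contributes $N^{2+o(1)}|\cA|$, matching the first target term. For $|t_1-t_2|\ge N$ the error $N/t^{1/2}$ is at most $N^{1/2}$, contributing $N^{3/2+o(1)}I$, which absorbs under $V^2\ge N^{3/2+o(1)}$. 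With this split your threshold $V^4\ge N^{3+o(1)}$ is indeed exactly right---but for a different reason than the one you gave.
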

\begin{proof}
For $0\le k \ll \delta^{-1}$ define
$$\cA_k=\cA\cap [k \delta T,(k+1)\delta T].$$
We have 
\begin{align*}
|\cA_k|V\le \sum_{t\in \cA_k}\left|\sum_{N\le n \le 2N}a_n n^{it}\right|\le N^{o(1)}\sum_{N\le n \le 2N}\left|\sum_{t\in \cA_k}\theta(t)n^{it}\right|,
\end{align*}
for some sequence of complex numbers $\theta$ satisfying $|\theta(t)|=1$. By the Cauchy-Schwarz inequality 
\begin{align}
\label{eq:Ak1}  \nonumber
|\cA_k|^2V^2&\ll N^{1+o(1)}\sum_{n=1}^{\infty}b(n)\left|\sum_{t\in \cA_k}\theta(t)n^{it}\right|^2 \\
 &\ll N^{1+o(1)}\sum_{t_1,t_2\in \cA_k}\left|\sum_{n=1}^{\infty}b(n)n^{i(t_1-t_2)}\right| 
\nonumber \\
&\ll N^{2+o(1)}I((\log{T})^2,\cA_k)+\sum_{\substack{t_1,t_2\in \cA_k \\ |t_1-t_2|\ge (\log{T})^2}}\left|\sum_{N\le n \le 2N}n^{i(t_1-t_2)}\right|,
\end{align}
with $b(n)$ is as in Lemma~\ref{lem:jut}. The assumption $\cA_k$ is well spaced implies 
\begin{align*}
I((\log{T})^2,\cA_k)\ll N^{o(1)}|\cA_k|,
\end{align*}
and for $t_1,t_2\in \cA_k$ satisfying $|t_1-t_2|\ge (\log{T})^2$ we have $|t_1-t_2|\le \delta T$, hence by Lemma~\ref{lem:jut}
\begin{align*}
\left|\sum_{N\le n \le 2N}n^{i(t_1-t_2)}\right|\ll N^{1/2}\int_{|\tau|\le (\log{T})^2}\left|\sum_{n\le \delta T/N}n^{-1/2+i(t_1-t_2+\tau)}\right|d\tau+1.
\end{align*}
Substituting the above into~\eqref{eq:Ak1} and taking a maximum over $\tau$, we get 
\begin{align*}
|\cA_k|^2V^2&\ll N^{3/2+o(1)}\sum_{\substack{t_1,t_2\in \cA_k}}\left|\sum_{n\le \delta T/N}a_nn^{-1/2+i(t_1-t_2)}\right| \\ & \quad \quad \quad +N^{2+o(1)}|\cA_k|+N^{3/2+o(1)}|\cA_k|^2,
\end{align*}
for some sequence of complex numbers $a_n$ satisfying $|a_n|=1$. By~\eqref{eq:NTVcond}, the above simplifies to 
\begin{align*}
|\cA_k|^2V^2&\ll N^{3/2+o(1)}\sum_{\substack{t_1,t_2\in \cA_k}}\left|\sum_{n\le \delta T/N}a_nn^{-1/2+i(t_1-t_2)}\right|+N^{2+o(1)}|\cA_k|.
\end{align*}
Summing over $|k|\ll \delta^{-1}$ and using Lemma~\ref{lem:ell2counting} to estimate 
$$\sum_{k\ll \delta^{-1}}|\cA_k|^2\gg I(\delta T,\cA),$$
we get 
\begin{align*}
I(\delta T,\cA)V^2 &\ll N^{2+o(1)}|\cA|+N^{3/2+o(1)}\sum_{k\ll \delta^{-1}}\sum_{\substack{t_1,t_2\in \cA_k}}\left|\sum_{n\le \delta T/N}a_nn^{-1/2+i(t_1-t_2)}\right| \\
&\ll N^{2+o(1)}|\cA|+N^{3/2+o(1)}\sum_{\substack{\substack{t_1,t_2\in \cA \\ |t_1-t_2|\le \delta T}}}\left|\sum_{n\le \delta T/N}a_nn^{-1/2+i(t_1-t_2)}\right|, \nonumber
\end{align*}
after noting that if $t_1,t_2\in \cA_k$ then $|t_1-t_2|\le \delta T$.
\end{proof}

\section{Zero density estimates}
We next collect some preliminaries from the method of zero detection polynomials, we refer the reader to~\cite[Chapter~11]{Ivic} or ~\cite[Chapter~12]{Mont}  for details.
\begin{lemma}
\label{lem:zerodensity}
Let $X,Y$ be parameters satisfying $2\le X \le Y \le T^A$ for some absolute constant $A$ and define
\begin{align*}
M_X(s)=\sum_{n\le X}\frac{\mu(n)}{n^s},
\end{align*} 
and
\begin{align*}
a_n=\sum_{\substack{d|n \\ d\le X}}\mu(d).
\end{align*}
There exists two well spaced sets $\cA_1,\cA_2 \subset \C$ such that
$$N(\sigma,T)\ll T^{o(1)}(|\cA_1|+|\cA_2|).$$
If $\rho=\beta+i\gamma \in \cA_1$ then 
$$\beta \ge \sigma, \quad  \gamma\le T,$$
and 
\begin{align}
\label{eq:zeroclass1}
\sum_{X\le n \le Y^2}a_n n^{-\rho}e^{-n/Y}\gg 1.
\end{align}
If $\rho=\beta+i\gamma \in \cA_2$ then
$$\beta \ge \sigma, \quad  \gamma\le T,$$
and 
\begin{align}
\label{eq:zeroclass2}
\int_{-C\log{T}}^{C\log{T}}\zeta\left(\frac{1}{2}+i(\gamma+\tau)\right)M_X\left(\frac{1}{2}+i(\gamma+\tau)\right)Y^{1/2-\beta+i\tau}& \Gamma\left(\frac{1}{2}-\beta +i\tau\right)d\tau  \\ & \quad \quad \quad \gg 1, \nonumber
\end{align}
where $C$ is some absolute constant.
\end{lemma}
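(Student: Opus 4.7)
The plan is to apply the classical zero detection technique built around the mollifier identity
\begin{equation*}
\zeta(s) M_X(s) = 1 + \sum_{n > X} a_n n^{-s}, \quad \mathrm{Re}(s) > 1,
\end{equation*}
where $a_n = \sum_{d \mid n,\, d \le X} \mu(d)$, so that $a_1 = 1$, $a_n = 0$ for $2 \le n \le X$, and $|a_n| \ll n^{o(1)}$. First, the Riemann--von Mangoldt bound $N(T+1) - N(T) \ll \log T$ lets me extract a well-spaced subfamily from the zeros counted by $N(\sigma,T)$ while losing only a factor $T^{o(1)}$, and I discard the zeros with $\gamma \le C \log T$, which number only $O((\log T)^2)$.

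For each remaining zero $\rho = \beta + i\gamma$ with $\beta \ge \sigma$, Mellin inversion of the weight $e^{-n/Y}$ via $\Gamma(s) = \int_0^\infty e^{-u} u^{s-1}\, du$ gives
\begin{equation*}
\sum_{n=1}^{\infty} a_n n^{-\rho} e^{-n/Y} = \frac{1}{2\pi i}\int_{(2)} \zeta(s + \rho) M_X(s + \rho) \Gamma(s) Y^s\, ds.
\end{equation*}
I then shift the contour to $\mathrm{Re}(s) = 1/2 - \beta$, picking up the residue at $s = 0$ from $\Gamma$, which equals $\zeta(\rho) M_X(\rho) = 0$, and the residue at $s = 1 - \rho$ from $\zeta$, which equals $M_X(1) \Gamma(1-\rho) Y^{1-\rho}$; Stirling and $|M_X(1)| \ll 1$ make this $\ll Y^{1-\sigma} e^{-\pi \gamma / 2} \ll T^{-A}$ once $\gamma \ge C \log T$ and $Y \le T^A$. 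On the left-hand side, the $n = 1$ term contributes $1 + O(Y^{-1})$, terms $2 \le n \le X$ vanish, and the tail $n > Y^2$ is $O(T^{-A})$ by rapid decay of $e^{-n/Y}$. On the new contour, parametrising $s = 1/2 - \beta + i\tau$ turns $s + \rho$ into $1/2 + i(\gamma + \tau)$, and the decay $|\Gamma(1/2 - \beta + i\tau)| \ll e^{-\pi|\tau|/2}$ lets me truncate to $|\tau| \le C \log T$ with $O(T^{-A})$ error.

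Combining these estimates yields the dichotomy
\begin{equation*}
1 \ll \left|\sum_{X \le n \le Y^2} a_n n^{-\rho} e^{-n/Y}\right| + \left|\int_{-C\log T}^{C\log T} \zeta\bigl(\tfrac12 + i(\gamma + \tau)\bigr) M_X\bigl(\tfrac12 + i(\gamma + \tau)\bigr) Y^{1/2 - \beta + i\tau} \Gamma\bigl(\tfrac12 - \beta + i\tau\bigr)\, d\tau\right|,
\end{equation*}
so each remaining zero satisfies at least one of \eqref{eq:zeroclass1} or \eqref{eq:zeroclass2}, giving $N(\sigma,T) \ll T^{o(1)}(|\cA_1| + |\cA_2|)$. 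The main technical obstacle is the bookkeeping of all error terms -- the residue at $s = 1 - \rho$, the Dirichlet tail $n > Y^2$, and the large-$|\tau|$ portion of the shifted contour integral -- each of which is controlled by Stirling estimates together with the crude polynomial bounds $|\zeta(1/2 + it)|, |M_X(1/2 + it)| \ll T^{O(1)}$ on vertical lines.
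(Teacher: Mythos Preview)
Your argument is correct and is precisely the classical zero-detection derivation the paper is invoking; note that the paper does not prove this lemma but simply refers the reader to \cite[Chapter~11]{Ivic} and \cite[Chapter~12]{Mont}, where the argument you have written out is given. Your handling of the residues, the tail $n>Y^2$, and the contour truncation via Stirling all match the standard treatment in those references.
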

Using some Fourier analysis one may remove the dependence of the coefficients in~\eqref{eq:zeroclass1} on the real part of the zeros $\rho$.
\begin{lemma}
\label{lem:zerodensity1}
Let $\sigma \ge 1/2+o(1)$. With notation as in Lemma~\ref{lem:zerodensity}, let $X,Y$ be parameters satisfying $2\le X \le Y \le T^A$. There exists  some $N$ satisfying
\begin{align}
\label{lem:density1N}
X\le N \le 2Y,
\end{align}
a sequence of complex numbers $b_n$ satisfying $|b_n|\le 1$ and two well spaced sets $\cA_1,\cA_2 \subset [0,T]$ satisfying
$$N(\sigma,T)\ll T^{o(1)}(|\cA_1|+|\cA_2|).$$
If $t\in \cA_1$ we have
\begin{align}
\label{eq:zeroclass1}
N^{o(1)}\sum_{N\le n \le 2N}b_n n^{it}\gg N^{\sigma}, \quad t\in \cA_1,
\end{align}
and if $t\in \cA_2$
\begin{align}
\label{eq:zeroclass2}
N^{o(1)}\zeta\left(\frac{1}{2}+it \right)M_X\left(\frac{1}{2}+it \right) \gg Y^{\sigma -1/2}, \quad t\in \cA_2.
\end{align}
\end{lemma}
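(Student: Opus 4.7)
The plan is to reformulate the output of Lemma~\ref{lem:zerodensity} by converting the complex zero parameter $\rho = \beta + i\gamma$ into a real variable $t \in [0, T]$ and by replacing the $\rho$-dependent coefficients with a single sequence $b_n$ with $|b_n| \le 1$. I treat the two sets $\cA_1, \cA_2$ separately. For $\cA_2$ little work is needed: the integral in~\eqref{eq:zeroclass2} of Lemma~\ref{lem:zerodensity} is over $|\tau| \le C\log T$ where $|\Gamma(1/2-\beta+i\tau)| \le T^{o(1)}$, so pigeonholing on $\tau$ and using $|Y^{1/2-\beta+i\tau}| = Y^{1/2-\beta} \le Y^{1/2-\sigma}$ yields some $\tau^* = \tau^*(\rho)$ with
\[
\left|\zeta\left(\tfrac{1}{2}+i(\gamma+\tau^*)\right)M_X\left(\tfrac{1}{2}+i(\gamma+\tau^*)\right)\right| \gg Y^{\sigma-1/2}T^{-o(1)}.
\]
Partitioning $\cA_2$ into $T^{o(1)}$ buckets by the value of $\tau^*$ and keeping the densest, setting $t = \gamma + \tau^*$, and thinning to restore $1$-spacing (losing a bounded factor) gives the $\cA_2$ conclusion.

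For $\cA_1$ I begin from~\eqref{eq:zeroclass1}. The factor $e^{-n/Y}$ suppresses $n > Y(\log T)^2$, so the effective summation range is $X \le n \le Y\cdot T^{o(1)}$. A dyadic decomposition into $O(\log T)$ blocks plus pigeonhole over both the block and the zeros fixes a single scale $N$ with $X \le N \le 2Y$ (after absorbing the $T^{o(1)}$ enlargement using the decay of $e^{-n/Y}$) and a subset $\cA_1' \subseteq \cA_1$ of size $|\cA_1'| \gg |\cA_1| T^{-o(1)}$ on which
\[
\left|\sum_{N \le n \le 2N} a_n n^{-\rho}e^{-n/Y}\right| \gg T^{-o(1)}.
\]
Factoring $n^{-\rho} = N^{-\beta}(n/N)^{-\beta}n^{-i\gamma}$ and invoking $\beta \ge \sigma$ converts this to
\[
\left|\sum_{N \le n \le 2N} a_n (n/N)^{-\beta} e^{-n/Y} n^{-i\gamma}\right| \gg N^{\sigma} T^{-o(1)}.
\]

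The remaining task, stripping the $\beta$-dependence from the smooth weight $(n/N)^{-\beta}e^{-n/Y}$, is the heart of the argument. I fix a smooth bump $\chi$ equal to $1$ on $[0,\log 2]$ and supported in a slightly enlarged interval, and set
\[
g_\rho(x) := \chi(x)\, e^{-\beta x}\exp(-Ne^x/Y),
\]
so that $g_\rho(\log(n/N)) = (n/N)^{-\beta}e^{-n/Y}$ for $n \in [N, 2N]$. Since $\beta \le 1$ and $N/Y \le 2$, the function $g_\rho$ and all its derivatives are bounded uniformly in $\rho$, so $\widehat{g_\rho}$ decays faster than any polynomial with $\|\widehat{g_\rho}\|_1 \ll 1$ and effective support $|\xi| = O(1)$. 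Fourier inversion turns the sum of interest into
\[
\int \widehat{g_\rho}(\xi)\, N^{-i\xi} \sum_{n} a_n\, \chi(\log(n/N))\, n^{-i(\gamma-\xi)}\, d\xi,
\]
and pigeonholing over a $1/\log T$-fine grid in $\xi \in [-O(1), O(1)]$ fixes a single $\xi^*$ and a subset $\cA_1'' \subseteq \cA_1'$ with $|\cA_1''| \gg |\cA_1|T^{-o(1)}$ satisfying
\[
\left|\sum_{n \asymp N} a_n \chi(\log(n/N))\, n^{-i(\gamma - \xi^*)}\right| \gg N^{\sigma}T^{-o(1)}.
\]
The support of $\chi(\log(n/N))$ is a bounded dyadic enlargement of $[N, 2N]$, so an additional dyadic splitting and pigeonhole isolates a block of the required form $[N, 2N]$ after redefining $N$ up to a constant factor. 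Absorbing $\chi$ and a factor of $T^{-o(1)}$ into $b_n$ yields $|b_n| \le 1$, and setting $t = \xi^* - \gamma$ (thinning to restore $1$-spacing) produces the $\cA_1$ conclusion. The main obstacle is this Fourier step: one must verify the uniform rapid decay of $\widehat{g_\rho}$ and carefully handle the enlargement of the $n$-range arising from the cutoff $\chi$, making sure these boundary terms can be packaged back into a clean Dirichlet polynomial over $[N, 2N]$ with a uniformly bounded coefficient sequence.
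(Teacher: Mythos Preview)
Your approach matches the paper's: the only proof offered there is the single sentence ``Using some Fourier analysis one may remove the dependence of the coefficients \ldots on the real part of the zeros $\rho$,'' and your dyadic pigeonhole followed by Fourier inversion of the smooth $\beta$-dependent weight $g_\rho$ is exactly the intended argument. The treatment of $\cA_2$ is fine.

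One correction to the Fourier step for $\cA_1$: after writing the block as $\sum_{N\le n\le 2N} a_n\, g_\rho(\log(n/N))\, n^{-i\gamma}$ and Fourier-inverting $g_\rho$, the inner sum remains \emph{exactly} over $N\le n\le 2N$,
\[
\int \widehat{g_\rho}(\xi)\, N^{-i\xi} \sum_{N \le n \le 2N} a_n\, n^{-i(\gamma-\xi)}\, d\xi,
\]
with no factor $\chi(\log(n/N))$ and no enlargement of the $n$-range. The cutoff $\chi$ lives only inside $g_\rho$ (to guarantee rapid decay of $\widehat{g_\rho}$) and never resurfaces in the Dirichlet polynomial itself. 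Your ``additional dyadic splitting'' paragraph is therefore unnecessary; the argument concludes directly after the pigeonhole on $\xi$ by setting $t=\gamma-\xi^{*}$ and $b_n=a_n/N^{o(1)}$. Separately, the stated bound $N\le 2Y$ is a slight imprecision of the lemma rather than of your argument: the exponential tail only forces $N\ll Y\log T$, so your parenthetical about absorbing a $T^{o(1)}$ enlargement is in fact the honest statement.
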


We refer the reader to either~\cite[Section~1]{Bou} or~\cite[Chapter~11]{Ivic} for details of the following reduction from Lemma~\ref{lem:zerodensity1} which makes use of Heath-Brown's twelfth power moment estimate~\cite{HB0}.
\begin{lemma}
\label{lem:zerodensity2}
Let $Y\le T^{A}$ be some parameter. There exists some $N$ satisfying
\begin{align}
\label{eq:zerodensity1X}
Y^{4/3}<N<Y^{2+o(1)},
\end{align}
and some sequence of complex numbers $a_n$ satisfying $|a_n|\le 1$ such that for some well spaced set $\cA\subseteq [0,T]$ with
\begin{align*}
N^{o(1)}\left|\sum_{N\le n \le 2N}a_n n^{it}\right|\ge N^{\sigma} \quad t\in \cA,
\end{align*}
we have 
\begin{align*}
N(\sigma,T)\ll T^{o(1)}\left(|\cA|+T^2Y^{6(1-2\sigma)}\right).
\end{align*}
\end{lemma}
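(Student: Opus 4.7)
The plan is to invoke Lemma~\ref{lem:zerodensity1} with a carefully chosen mollifier length $X$ and smoothing parameter, and to treat the two resulting classes of zeros separately: the set $\cA_1$ will directly furnish the required large values set $\cA$, while the set $\cA_2$ will be controlled via Heath-Brown's twelfth power moment for the mollified zeta function $\zeta M_X$, producing the additive term $T^{2+o(1)}Y^{6(1-2\sigma)}$.

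Concretely, I would apply Lemma~\ref{lem:zerodensity1} with the mollifier length $X$ comparable to $Y^{4/3}$ and with the smoothing parameter chosen large enough that the dyadic decomposition of the zero-detection polynomial yields some $N$ in the window $(Y^{4/3},Y^{2+o(1)})$ together with a coefficient sequence $|b_n|\le 1$. For $\cA_1$ this directly returns a well spaced set on which $N^{o(1)}|\sum_{N\le n\le 2N}b_n n^{it}|\gg N^\sigma$, so that taking $\cA=\cA_1$ handles the $|\cA|$ side of the claimed bound. For $\cA_2$ I would start from the pointwise inequality $|\zeta(1/2+it)M_X(1/2+it)|\gg Y^{\sigma-1/2}$ valid on $\cA_2$, raise to the twelfth power and sum over the well spaced set $\cA_2$ to obtain
\begin{align*}
|\cA_2|\,Y^{12(\sigma-1/2)}\ll \sum_{t\in\cA_2}\left|\zeta\left(\tfrac{1}{2}+it\right)M_X\left(\tfrac{1}{2}+it\right)\right|^{12}\ll T^{2+o(1)},
\end{align*}
the last estimate being Heath-Brown's twelfth power moment for the mollified Riemann zeta function from~\cite{HB0}. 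Rearranging gives $|\cA_2|\ll T^{2+o(1)}Y^{6(1-2\sigma)}$, and combining with $|\cA_1|\le |\cA|$ via the decomposition $N(\sigma,T)\ll T^{o(1)}(|\cA_1|+|\cA_2|)$ of Lemma~\ref{lem:zerodensity1} produces the claimed bound.

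The main obstacle is the joint calibration of $X$ and the smoothing parameter: on the one hand $X$ must be large enough to force $N>Y^{4/3}$ in the dyadic window produced by Lemma~\ref{lem:zerodensity1}, on the other hand $X$ must remain only a small power of $T$ so that the twelfth moment of $\zeta M_X$ still enjoys the sharp $T^{2+o(1)}$ bound of Heath-Brown rather than a weaker estimate derived from trivial bounds on $M_X$. Reconciling these two constraints is exactly the point at which the hypothesis $Y\le T^A$ enters; beyond this technical matching of parameters the proof is a clean combination of the zero-detection decomposition of Lemma~\ref{lem:zerodensity1} with the full strength of Heath-Brown's mollified twelfth moment, which is what allows the factor $T^{2}$ (rather than the $T^{1+c}$ that fourth or sixth moment arguments would yield) and hence the exponent $6(1-2\sigma)$ in the final conclusion.
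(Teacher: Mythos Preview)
Your overall architecture is right --- split via Lemma~\ref{lem:zerodensity1}, feed $\cA_1$ into the large values machinery, and dispose of $\cA_2$ with Heath-Brown's twelfth moment --- but the calibration you propose cannot be carried out, and the missing ingredient is not a parameter choice but an extra step.

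First, Lemma~\ref{lem:zerodensity1} requires $X\le Y$, so taking $X$ comparable to $Y^{4/3}$ is not permitted; and in any case that lemma only produces $N$ in the range $X\le N\le 2Y$, which never reaches $Y^{2+o(1)}$ no matter how you tune the smoothing parameter (changing the smoothing parameter to some $Y'$ would replace $Y^{\sigma-1/2}$ by $(Y')^{\sigma-1/2}$ in~\eqref{eq:zeroclass2} and spoil the class~II bound). Second, the reference~\cite{HB0} gives the twelfth moment of $\zeta$ itself, not of $\zeta M_X$; absorbing $M_X$ trivially costs a factor $\|M_X\|_\infty^{12}\ll X^{6}$, so the class~II bound becomes $|\cA_2|\ll X^{6}T^{2+o(1)}Y^{6(1-2\sigma)}$, which is acceptable only when $X=T^{o(1)}$. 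These two constraints --- $X$ large enough to force $N>Y^{4/3}$, and $X=T^{o(1)}$ for the twelfth moment --- are genuinely incompatible, and the hypothesis $Y\le T^{A}$ does not resolve this.

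The standard argument (as in~\cite[Chapter~11]{Ivic} or~\cite[Section~1]{Bou}) instead takes $X=T^{o(1)}$, so that Lemma~\ref{lem:zerodensity1} yields a polynomial of some length $M$ with $T^{o(1)}\le M\le 2Y$ on which $|\sum b_n n^{it}|\gg M^{\sigma}$. One then \emph{raises this polynomial to an integer power} $k$, chosen so that $N=M^{k}$ lands in $(Y^{4/3},Y^{2+o(1)})$; since the intervals $[Y^{4/(3k)},Y^{2/k}]$ overlap for $k\ge 2$, such a $k$ always exists, and the new coefficients (divisor-bounded convolutions of the $b_n$) remain $\le N^{o(1)}$. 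This powering trick is the step your proposal is missing; once it is inserted, the class~II contribution is handled exactly as you describe (now with $X=T^{o(1)}$, so the $X^{6}$ loss is harmless), and the lemma follows.
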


\section{Proof of Theorem~\ref{thm:main1}}
By Lemma~\ref{lem:mainvlarge1} we have 
\begin{align}
\label{eq:main1case22}
I(\delta T,\cA)\ll \frac{N^{2+o(1)}}{V^2}|\cA|+ \frac{N^{3/2+o(1)}}{V^2}W,
\end{align}
where
\begin{align*}
W=\sum_{\substack{t_1,t_2\in \cA \\ |t_1-t_2|\le \delta T}}\left|\sum_{n\le \delta T/N}n^{-1/2+i(t_1-t_2)}\right|.
\end{align*}
For integer $\ell$, define
\begin{align*}
r(\ell)=|\{ (t_1,t_2)\in \cA\times \cA \ : \ \ell< t_1-t_2\le \ell+1\}|,
\end{align*}
so that 
\begin{align*}
W\le \sum_{|\ell|\le \delta T}r(\ell)\max_{0\le \theta \le 1}\left|\sum_{n\le \delta T/N} n^{-1/2+i(\ell+\theta)}\right|.
\end{align*}
For integers $i,j\ge 0$ define the set 
\begin{align*}
D_{i,j}=\left\{ |\ell|\le \delta T \ : \ 2^i\le r(\ell)< 2^{i+1}, \ 2^j\le \max_{0\le \theta\le 1}\left|\sum_{n\le \delta T/N} n^{-1/2+i(\ell+\theta)}\right|< 2^{j+1} \right\},
\end{align*}
so that 
\begin{align}
\label{eq:WW0111}
W\ll W_0+I(\delta T,\cA),
\end{align}
where the factor $I(\delta T,\cA)$ comes from values of $\ell$ satisfying 
\begin{align*}
\max_{0\le \theta\le 1}\left|\sum_{n\le \delta T/N} n^{-1/2+i(\ell+\theta)}\right|\le 1,
\end{align*}
and $W_0$ is defined by 
\begin{align}
\label{eq:1W0def}
W_0=\sum_{i,j\ll \log{N}}\sum_{\ell \in D_{i,j}}r(\ell)\max_{0\le \theta \le 1}\left|\sum_{n\le N^2/\delta T} n^{-1/2+i(\ell+\theta)}\right|^2.
\end{align}
Note that for each $0\le i,j\ll \log{N}$ we have 
\begin{align*}
2^{i+j}|D_{i,j}|\ll\sum_{\ell \in D_{i,j}}r(\ell)\max_{0\le \theta \le 1}\left|\sum_{n\le \delta T/N} n^{-1/2+i(\ell+\theta)}\right|\ll 2^{i+j}|D_{i,j}|,
\end{align*}
and hence by the pigeonhole principle applied to the set $$\{(i,j) \ : 0\le i,j\ll \log{N}\},$$ there exists some pair $i,j$ satisfying 
\begin{align*}
2^{i+j}|D_{i,j}|\ll W_0\ll N^{o(1)}2^{i+j}|D_{i,j}|.
\end{align*}
Write 
\begin{align*}
 D=D_{i,j}, \quad \Delta=2^{i}, \quad H=2^j,
\end{align*}
so that 
\begin{align}
\label{eq:W0DH}
\Delta H |D|\ll W_0\ll N^{o(1)}\Delta  H |D|.
\end{align}
We consider two cases depending on 
\begin{align}
\label{eq:thmmain1case1}
|D|\ge N
\end{align}
or 
\begin{align}
\label{eq:thmmain1case2}
|D|<N.
\end{align}
Consider first~\eqref{eq:thmmain1case1}. From the definition of $H,D$, we have 
\begin{align*}
H^{2k}|D|&\ll \sum_{\ell \ll \delta T}\max_{0\le \theta \le 1} \left|\sum_{n\le \delta T/N}n^{-1/2+i(\ell+\theta)}\right|^{2k}
\end{align*}
and hence by~\eqref{eq:main1delta} and Lemma~\ref{lem:classicalmoments}
\begin{align}
\label{eq:H2kD}
H^{2k}|D|\ll (\delta T)^{1+o(1)}.
\end{align}
By~\eqref{eq:thmmain1case1} this implies 
\begin{align*}
H\ll \left(\frac{\delta T}{N}\right)^{1/2k}N^{o(1)}.
\end{align*}
From~\eqref{eq:W0DH} and the bound
\begin{align}
\label{eq:DeltaDthm1}
\Delta |D|\ll I(\delta T,\cA),
\end{align}
we get 
\begin{align*}
W_0\ll \left(\frac{\delta T}{N}\right)^{1/2k}I(\delta T,\cA)N^{o(1)}.
\end{align*}
By~\eqref{eq:main1case22}, ~\eqref{eq:WW0111} and~\eqref{eq:1W0def}
\begin{align*}
I(\delta T,\cA)\ll \frac{N^{2+o(1)}}{V^2}|\cA|+\frac{N^{3/2+o(1)}}{V^2}\left(\frac{\delta T}{N}\right)^{1/2k}I(\delta T,\cA),
\end{align*}
and hence from~\eqref{eq:main1delta}
\begin{align}
\label{eq:main1case1final}
I(\delta T,\cA)\ll \frac{N^{2+o(1)}}{V^2}.
\end{align}
Suppose next~\eqref{eq:thmmain1case2} and consider 
\begin{align*}
S=\int_{|\tau|\le 2\log{N}}\sum_{\substack{\ell \in D, t\in \cA \\ }}\left|\sum_{N\le n \le 2N}a_n n^{-i(\ell+t+\tau)}\right|^2d\tau.
\end{align*} 
Define the set $\cB$ by 
$$\cB=\{ (\ell,t)\in D\times \cA \ : \exists \ t'\in \cA \ \text{such that} \ |t'-t-\ell|\le 2\},$$
so that 
\begin{align}
\label{eq:SubB}
S\ge \sum_{\substack{(\ell,t)\in \cB }}\int_{|\tau|\le 2\log{N}}\left|\sum_{N\le n \le 2N}a_n n^{i(\ell+t+\tau)}\right|^2d\tau.
\end{align}
If $(\ell,t)\in \cB$ then there exists some $t'\in \cA$ and some $|\theta|\le 2$ such that 
$$t'+\theta=\ell+t,$$
and hence 
\begin{align*}
\int_{|\tau|\le 2\log{N}}\left|\sum_{N\le n \le 2N}a_n n^{i(\ell+t+\tau)}\right|^2d\tau=\int_{|\tau|\le 2\log{N}}\left|\sum_{N\le n \le 2N}a_n n^{i(t'+\theta+\tau)}\right|^2d\tau.
\end{align*}
Since $|\theta|\le 2$ we have 
$$(\theta+[-2\log{N},2\log{N}])\cap [-\log{N},\log{N}]=[-\log{N},\log{N}],$$
and hence 
\begin{align*}
\int_{|\tau|\le 2\log{N}}\left|\sum_{N\le n \le 2N}a_n n^{i(\ell+t+\tau)}\right|^2d\tau\ge \int_{|\tau|\le \log{N}}\left|\sum_{N\le n \le 2N}a_n n^{i(t'+\tau)}\right|^2d\tau.
\end{align*}
Applying Lemma~\ref{lem:removemax} gives 
\begin{align*}
\int_{|\tau|\le 2\log{N}}\left|\sum_{N\le n \le 2N}a_n n^{i(\ell+t+\tau)}\right|^2d\tau\gg \frac{V^2}{\log{N}},
\end{align*}
and hence by~\eqref{eq:SubB}
\begin{align*}
N^{o(1)}S\gg V^2|\cB|.
\end{align*}
Recalling the definition of $D,\Delta$, we have 
\begin{align*}
\Delta |D|\ll \sum_{\ell \in D}r(\ell)\ll |\{ (\ell,t,t')\in D \times \cA \times \cA \ : \ 0<t'-t-\ell \le 1\}|\le |\cB|,
\end{align*}
which implies 
\begin{align}
\label{eq:SdeltaDLB}
\Delta |D|\ll \frac{N^{o(1)}}{V^2}S.
\end{align}
Taking a maximum over $\tau$ in $S$, there exists some sequence of complex numbers $b(n)$ satisfying $|b(n)|\le 1$ such that 
\begin{align*}
S&\ll N^{o(1)}\sum_{\ell\in D, t\in \cB}\left|\sum_{N\le n \le 2N}b(n)n^{i(\ell+t)}\right|^2 \\ 
&\le  \sum_{N\le n_1,n_2\le 2N}\left|\sum_{\ell\in D}\left(\frac{n_1}{n_2}\right)^{i\ell} \right|\left|\sum_{t\in \cB}\left(\frac{n_1}{n_2}\right)^{it}\right|.
\end{align*}
By the Cauchy-Schwarz inequality
\begin{align}
\label{eq:SS1S2}
S^2\ll N^{2+o(1)}S_1S_2,
\end{align}
where 
\begin{align*}
S_1=\sum_{N\le n_1,n_2\le 2N}(n_1n_2)^{-1/2}\left|\sum_{\ell\in D}\left(\frac{n_1}{n_2}\right)^{i\ell} \right|^2,
\end{align*}
and 
\begin{align*}
S_2=\sum_{N\le n_1,n_2\le 2N}(n_1n_2)^{-1/2}\left|\sum_{t\in \cA}\left(\frac{n_1}{n_2}\right)^{it} \right|^2.
\end{align*}
Interchanging summation, we have 
\begin{align*}
S_1\ll \sum_{\ell_1,\ell_2\in D}\left|\sum_{N\le n \le 2N}n^{-1/2+i(\ell_1-\ell_2)}\right|^2.
\end{align*}
and hence by Theorem~\ref{thm:heathbrown}  and~\eqref{eq:thmmain1case2}
\begin{align*}
S_1\ll N^{1+o(1)}|D|.
\end{align*}
By a similar argument 
\begin{align*}
S_2\ll N^{1+o(1)}|\cA|.
\end{align*}
From the above,~\eqref{eq:SdeltaDLB} and~\eqref{eq:SS1S2}
\begin{align*}
\Delta^2|D|\ll \frac{N^{4+o(1)}}{V^4}|\cA|.
\end{align*}
Returning to~\eqref{eq:W0DH}, we have 
\begin{align*}
W_0\ll N^{o(1)}(\Delta^2 |D|)^{1/2k}(\Delta |D|)^{1-1/k}(|D|H^{2k})^{1/2k}.
\end{align*}
The above,~\eqref{eq:H2kD} and~\eqref{eq:DeltaDthm1} imply 
\begin{align*}
W_0\ll \left(\frac{N^{4+o(1)}}{V^4}|\cA|\right)^{1/2k}I(\delta T,\cA)^{1-1/k}(\delta T)^{1/2k}.
\end{align*}
By~\eqref{eq:main1case22} and~\eqref{eq:WW0111}
\begin{align*}
I(\delta T,\cA)\ll \frac{N^{2+o(1)}}{V^2}|\cA|+\frac{N^{3k/2+2+o(1)}}{V^{2k+2}}|\cA|^{1/2}(\delta T)^{1/2}.
\end{align*}
From~\eqref{eq:main1case1final} the above bound holds provided either~\eqref{eq:thmmain1case1} or~\eqref{eq:thmmain1case2}. An application of Lemma~\ref{eq:ell2ell2c} implies 
\begin{align*}
|\cA|^2\ll \frac{1}{\delta}\frac{N^{2+o(1)}}{V^2}+\frac{1}{\delta^{1/3}}\frac{N^{k+4/3+o(1)}}{V^{4k/3+4/3}}T^{1/3},
\end{align*}
which completes the proof.

\section{Proof of Theorem~\ref{thm:main4}}

By Lemma~\ref{lem:mainvlarge1}
\begin{align*}
& I(\delta T,\cA) \ll  \frac{N^{2+o(1)}}{V^2}|\cA|+\frac{N^{3/2+o(1)}}{V^2}\sum_{\substack{t_1,t_2\in \cA \\ |t_1-t_2|\le \delta T}}\left|\sum_{n\le \delta T/N}n^{-1/2+i(t_1-t_2)}\right|,
\end{align*}
and H\"{o}lder's inequality 
\begin{align}
\label{eq:IDD33333}
I(\delta T,\cA)\ll \frac{N^{2+o(1)}}{V^2}|\cA|+\frac{N^{6+o(1)}}{V^{8}}W,
\end{align}
where 
\begin{align*}
W=\sum_{\substack{t_1,t_2\in \cA \\ |t_1-t_2|\le \delta T}}\left|\sum_{n\le \delta T/N}n^{-1/2+i(t_1-t_2)}\right|^4.
\end{align*}
We have 
\begin{align*}
W=\sum_{\substack{t_1,t_2\in \cA \\ |t_1-t_2|\le \delta T}}\left|\sum_{n\le (\delta T/N)^2}c_nn^{-1/2+i(t_1-t_2)}\right|^2,
\end{align*}
for some $c_n=N^{o(1)}$. By Lemma~\ref{lem:coefficients} and Corollary~\ref{cor:reflection}
\begin{align*}
W&\ll I(\delta T,\cA)N^{o(1)}+\frac{\delta^2 T^2N^{o(1)}}{N^2}|\cA|+\sum_{\substack{t_1,t_2\in \cA \\ |t_1-t_2|\le \delta T}}\left|\sum_{n\le 4N^2/(\delta T)}n^{-1/2+i(t_1-t_2)}\right|^2,
\end{align*}
which by~\eqref{eq:IDD33333} implies that 
\begin{align}
\label{eq:main4prelimstep}
I(\delta T,\cA)\ll \frac{N^{2+o(1)}}{V^2}|\cA|+\frac{\delta^2 T^2 N^{4+o(1)}}{V^{8}}|\cA|+\frac{N^{6+o(1)}}{V^8}W_0,
\end{align}
with 
\begin{align*}
W_0=\sum_{\substack{t_1,t_2\in \cA \\ |t_1-t_2|\le \delta T}}\left|\sum_{n\le 4N^2/(\delta T)}n^{-1/2+i(t_1-t_2)}\right|^2.
\end{align*}
This implies either 
\begin{align*}
|\cA|\ll \frac{1}{\delta}\frac{N^{2+o(1)}}{V^2}+\frac{\delta T^2 N^{4+o(1)}}{V^{8}},
\end{align*}
or 
\begin{align}
\label{eq:main4case222b}
I(\delta T,\cA)\ll \frac{N^{6+o(1)}}{V^8}W_0.
\end{align}
We may suppose~\eqref{eq:main4case222b} since otherwise the result follows. Considering $W_0$, partitioning summation over $n$ into dyadic intervals and applying Corollary~\ref{cor:larger} gives 
\begin{align*}
W_0\ll N^{o(1)}\sum_{\substack{t_1,t_2\in \cA \\ |t_1-t_2|\le \delta T}}\left|\sum_{16 N^2/(\delta T)\le n\le 32N^2/(\delta T)}n^{-1/2+i(t_1-t_2)}\right|^2,
\end{align*}
and hence by Lemma~\ref{lem:coefficients}
\begin{align}
\label{eq:W0111222main4}
W_0\ll \left(\frac{N^{2+o(1)}}{\delta T}\right)^{1/4}\sum_{\substack{t_1,t_2\in \cA \\ |t_1-t_2|\le \delta T}}\left|\sum_{16 N^2/(\delta T)\le n\le 32N^2/(\delta T)}n^{-5/8+i(t_1-t_2)}\right|^2.
\end{align}
Bounding the contribution from points $t_1,t_2$ satisfying $|t_1-t_2|\le N^{o(1)}$ trivially and applying Lemma~\ref{lem:jut1} to the remaining sum, we get
\begin{align}
\label{eq:W01main4}
W_0\ll \frac{N^{2+o(1)}}{\delta T}|\cA|+\left(\frac{N^{2+o(1)}}{\delta T}\right)^{1/4}I(\delta T,\cA)+\left(\frac{N^{2+o(1)}}{\delta T}\right)^{1/4}W_1,
\end{align}
where 
\begin{align*}
W_1=\sum_{\substack{t_1,t_2\in \cA \\ |t_1-t_2|\le \delta T}}\int_{-h^2}^{h^2}\left|\zeta\left(\frac{5}{8}+i(t_1-t_2+\tau)\right) \right|^2d\tau,
\end{align*}
\begin{align*}
h=(\log{T})^2,
\end{align*}
and we have used a second application of Lemma~\ref{lem:coefficients} to the sum~\eqref{eq:W0111222main4} in order to smooth the coefficients. Performing a dyadic partition as in the proof of Theorem~\ref{thm:main1}, there exists 
$$\Delta,H\gg 1,$$
and a set $D\subseteq \Z$ defined by 
\begin{align*}
&D= \\ & \left\{ |\ell|\le \delta T \ : \ \Delta \le r(\ell)< 2\Delta, \ H\le \int_{-2h^2}^{2h^2}\left|\zeta\left(\frac{5}{8}+i(\ell+\tau)\right) \right|^2d\tau< 2H \right\},
\end{align*}
such that
\begin{align}
\label{eq:WW0}
W_1\ll I(\delta T,\cA)+N^{o(1)}\Delta |D|H^2,
\end{align}
where 
\begin{align*}
r(\ell)=|\{ (t_1,t_2)\in \cA\times \cA \ : \ 0\le t_1-t_2-\ell <1 \}|.
\end{align*}
Note by Lemma~\ref{lem:8thmoment}
\begin{align}
\label{eq:D8}
|D|H^8\ll (\delta T)^{1+o(1)}.
\end{align}
Either 
\begin{align}
\label{eq:main4case1}
|D|\le N \quad \text{and} \quad |D|\le \frac{N^4}{(\delta T)^2},
\end{align}
or 
\begin{align}
\label{eq:main4case2}
|D|> N \quad \text{or} \quad |D|> \frac{N^4}{(\delta T)^2}.
\end{align}
If~\eqref{eq:main4case1}, then arguing as in the proof of Lemma~\ref{lem:e2energy},
\begin{align*}
V^2\Delta |D|\ll N^{o(1)}\sum_{d\in D,t\in \cA}\left|\sum_{N\le n \le 2N}c_n n^{i(t-d)}\right|^2,
\end{align*}
for some sequence of complex numbers $c_n$ satisfying $|c_n|\le 1$. Expanding the square, interchanging summation, applying the Cauchy-Schwarz inequality and rescaling we get 
\begin{align}
\label{eq:main42bb}
V^4\Delta^2|D|^2\ll N^{2+o(1)}\sum_{t_1,t_2\in \cA}\left|\sum_{N\le n \le 2N}n^{i(t_1-t_2)}\right|^2\sum_{d_1,d_2\in D}\left|\sum_{N\le n \le 2N}n^{i(d_1-d_2)}\right|^2.
\end{align}
By Theorem~\ref{thm:heathbrown}
\begin{align*}
\sum_{t_1,t_2\in \cA}\left|\sum_{N\le n \le 2N}n^{i(t_1-t_2)}\right|^2&\ll N^{o(1)}(|\cA|^2+N|\cA|+T^{1/2}|\cA|^{5/4}) \\
&\ll N^{1+o(1)}|\cA|,
\end{align*}
and 
\begin{align*}
\sum_{d_1,d_2\in \cA}\left|\sum_{N\le n \le 2N}n^{i(d_1-d_2)}\right|^2&\ll N^{o(1)}(|D|^2+N|D|+(\delta T)^{1/2}|D|^{5/4}) \\
&\ll N^{1+o(1)}|D|,
\end{align*}
where we have used~\eqref{eq:main4ass} and~\eqref{eq:main4case1} to simplify the above bounds. Combining with~\eqref{eq:main42bb} 
\begin{align*}
\Delta^2|D|\ll \frac{N^{4+o(1)}}{V^4}|\cA|.
\end{align*}
From the above and~\eqref{eq:D8}  
\begin{align*}
\Delta |D|H^2&\ll (\Delta^2|D|)^{1/4}(\Delta |D|)^{1/2}(|D|H^8)^{1/4} \\
&\ll \frac{N^{1+o(1)}}{V}|\cA|^{1/4}I(\delta T,\cA)^{1/2}(\delta T)^{1/4}.
\end{align*}
By~\eqref{eq:main4case222b},~\eqref{eq:W01main4} and~\eqref{eq:WW0}
\begin{align*}
I(\delta T,\cA)& \ll \frac{N^{8+o(1)}}{(\delta T)V^8}|\cA|+\frac{N^{13/2+o(1)}}{(\delta T)^{1/4}V^8}I(\delta T,\cA) \\ & \quad \quad \quad \quad +\frac{N^{13/2+o(1)}}{(\delta T)^{1/4}V^8}\frac{N^{1+o(1)}}{V}|\cA|^{1/4}I(\delta T,\cA)^{1/2}(\delta T)^{1/4} \\ 
&\ll \frac{N^{8+o(1)}}{(\delta T)V^8}|\cA|+\frac{N^{15/2+o(1)}}{V^9}|\cA|^{1/4}I(\delta T,\cA)^{1/2},
\end{align*}
where we have used~\eqref{eq:main4deltaass} to drop the second term. The above estimate implies that either 
\begin{align*}
|\cA|\ll \frac{N^{8+o(1)}}{\delta^2 TV^8},
\end{align*}
or 
\begin{align*}
I(\delta T,\cA)\ll \frac{N^{15+o(1)}}{V^{18}}|\cA|^{1/2},
\end{align*}
and hence 
\begin{align*}
|\cA|\ll \frac{N^{8+o(1)}}{\delta^2 TV^8}+\frac{N^{10+o(1)}}{\delta^{2/3}V^{12}},
\end{align*}
which completes the proof of case~\eqref{eq:main4case1}. Suppose next~\eqref{eq:main4case2}. From~\eqref{eq:D8} we have either 
\begin{align*}
H^2\ll \left(\frac{(\delta T)}{N}\right)^{1/4}N^{o(1)},
\end{align*}
or 
\begin{align*}
H^2 \ll \frac{(\delta T)^{3/4}}{N}N^{o(1)},
\end{align*}
which implies
\begin{align*}
H^2\ll \left(\frac{\delta T}{N}\right)^{1/4}N^{o(1)}+\frac{(\delta T)^{3/4}}{N}N^{o(1)}.
\end{align*}
By~\eqref{eq:WW0}
\begin{align}
\label{eq:WW0}
W_1\ll N^{o(1)}\left(1+\left(\frac{\delta T}{N}\right)^{1/4}+\frac{(\delta T)^{3/4}}{N}\right)I(\delta T,\cA),
\end{align}
and hence by~\eqref{eq:main4case222b} and~\eqref{eq:W01main4}
\begin{align*}
I(\delta T,\cA) & \ll \frac{N^{8+o(1)}}{\delta TV^8}|\cA|
\\& \quad \quad +N^{o(1)}\left(\frac{N^{13/2}}{V^8(\delta T)^{1/4}}+\frac{N^{25/4}}{V^8}+\frac{N^{11/2}(\delta T)^{1/2}}{V^8}\right)I(\delta T,\cA),
\end{align*}
By~\eqref{eq:main4ass} and~\eqref{eq:main4deltaass}  we may drop the last three terms to arrive at 
\begin{align*}
|\cA|\ll \frac{N^{8+o(1)}}{\delta^2 TV^8},
\end{align*}
and the result follows combining the estimates from cases~\eqref{eq:main4case1} and~\eqref{eq:main4case2}.
\section{Proof of Theorem~\ref{thm:main12}}
Let $0<\delta\le 1$ be some parameter satisfying  
\begin{align}
\label{eq:thm12deltadef}
N^{o(1)}\delta \le \frac{V^8}{TN^5},
\end{align}
and apply Lemma~\ref{lem:mainvlarge1} and the Cauchy-Schwarz inequality to get 
\begin{align*}
& I(\delta T,\cA) \ll  \frac{N^{2+o(1)}}{V^2}|\cA|+\frac{N^{3+o(1)}}{V^4}S,
\end{align*}
where 
\begin{align*}
S=\sum_{\substack{\substack{t_1,t_2\in \cA \\ |t_1-t_2|\le \delta T}}}\left|\sum_{n\le \delta T/N}a_nn^{-1/2+i(t_1-t_2)}\right|^2.
\end{align*}
This implies that either 
\begin{align}
\label{eq:main12case1}
I(\delta T,\cA) \ll  \frac{N^{2+o(1)}}{V^2}|\cA|,
\end{align}
or 
\begin{align}
\label{eq:main12case2}
& I(\delta T,\cA) \ll  \frac{N^{3+o(1)}}{V^4}S.
\end{align}
If~\eqref{eq:main12case1} then by Corollary~\ref{eq:ell2ell2c}
\begin{align}
\label{eq:main12case1z}
|\cA|\ll \frac{1}{\delta}\frac{N^{2+o(1)}}{V^2}.
\end{align}
Suppose next~\eqref{eq:main12case2}. For integer $\ell$ define 
\begin{align*}
r(\ell)=|\{ (t_1,t_2)\in \cA\times \cA \ : \ 0\le t_1-t_2-\ell<1\},
\end{align*}
so that 
\begin{align*}
S\le \sum_{\substack{\substack{ |\ell|\le \delta T}}}r(\ell)\max_{0\le \theta \le 1}\left|\sum_{n\le \delta T/N}a_nn^{-1/2+i(\ell+\theta)}\right|^2.
\end{align*}
Using Lemma~\ref{lem:removemax} to remove the maximum over $\theta$, there exists a sequence of complex numbers $c_n$ satisfying $|c_n|\le 1$ such that 
\begin{align*}
S\ll N^{o(1)}\sum_{\substack{\substack{|\ell| \le \delta T}}}r(\ell)\left|\sum_{n\le \delta T/N}c_nn^{-1/2+i\ell}\right|^2.
\end{align*}
Expanding the square and interchanging summation 
\begin{align*}
S\ll N^{o(1)}\sum_{n_1,n_2\le \delta T/N}(n_1n_2)^{-1/2}\left|\sum_{|\ell| \le \delta T}r(\ell)\left(\frac{n_1}{n_2}\right)^{i\ell} \right|,
\end{align*}
and hence by the Cauchy-Schwarz inequality 
\begin{align*}
S^2\ll N^{o(1)}\left(\frac{\delta T}{N}\right)\sum_{n_1,n_2\le \delta T/N}(n_1n_2)^{-1/2}\left|\sum_{|\ell| \le \delta T}r(\ell)\left(\frac{n_1}{n_2}\right)^{i\ell} \right|^2,
\end{align*}
which rearranges to 
\begin{align*}
S^2\ll N^{o(1)}\left(\frac{\delta T}{N}\right)\sum_{|\ell_1|,|\ell_2|\le \delta T}r(\ell_1)(\ell_2)\left|\sum_{n\le \delta T/N}n^{-1/2+i(\ell_1-\ell_2)}\right|^2.
\end{align*}
Substituting into~\eqref{eq:main12case2} we get 
\begin{align*}
I(\delta T,\cA)^2\ll \frac{\delta TN^{5+o(1)}}{V^8}\sum_{|\ell_1|,|\ell_2|\le \delta T}r(\ell_1)r(\ell_2)\left|\sum_{n\le \delta T/N}n^{-1/2+i(\ell_1-\ell_2)}\right|^2.
\end{align*}
By Theorem~\ref{thm:largeadditive} 
\begin{align*}
&\sum_{|\ell_1|,|\ell_2|\le \delta T}r(\ell_1)r(\ell_2)\left|\sum_{n\le \delta T/N}n^{-1/2+i(\ell_1-\ell_2)} \right|^2
\\ & \ll  N^{o(1)}\left(\|r\|^2_1+\frac{\delta T}{N}\|r\|^2_2+(\delta T)^{1/2}\|r\|^{1/2}_1\|r\|^{3/2}_2\right),
 \end{align*}
where 
\begin{align}
\label{eq:rr121}
\|r\|_1=\sum_{|\ell| \le  \delta T}r(\ell), \quad \text{and} \quad  \|r\|^2_2=\sum_{|\ell| \le \delta T}r(\ell)^2,
\end{align}
so that
\begin{align*}
\|r\|_1\ll I(\delta T,\cA).
\end{align*}
By the above and~\eqref{eq:thm12deltadef}
\begin{align*}
I(\delta T,\cA)^2\ll \frac{\delta TN^{5+o(1)}}{V^8}\left(\frac{\delta T}{N}\|r\|^2_2+(\delta T)^{1/2}\|r\|^{1/2}_1\|r\|^{3/2}_2 \right),
\end{align*}
which implies 
\begin{align*}
I(\delta T,\cA)^2\ll \frac{(\delta T)^2N^{10+o(1)}}{V^{16}}\|r\|^2_2.
\end{align*}
By~\eqref{eq:rr121} and Lemma~\ref{lem:e2energy}
\begin{align*}
\|r\|_2^2\ll \frac{N^{3/2+o(1)}}{V^2}|\cA|^{1/2}I(\delta T,\cA)+\frac{N^{4+o(1)}}{V^{4}}|\cA|,
\end{align*}
so that 
\begin{align*}
I(\delta T,\cA)^2\ll \frac{(\delta T)^2N^{10+o(1)}}{V^{16}}\left(\frac{N^{3/2+o(1)}}{V^2}|\cA|^{1/2}I(\delta T,\cA)+\frac{N^{4+o(1)}}{V^{4}}|\cA| \right).
\end{align*}
The above simplifies to 
\begin{align*}
I(\delta T,\cA)\ll \left(\frac{(\delta T)^2N^{23/2}}{V^{18}}+\frac{(\delta T)N^7}{V^{10}}\right)N^{o(1)}|\cA|^{1/2},
\end{align*}
and hence by Corollary~\ref{eq:ell2ell2c}
\begin{align*}
|\cA|\ll \frac{\delta^{2/3} T^{4/3}N^{23/3+o(1)}}{V^{12}}+\frac{T^{2/3}N^{14/3+o(1)}}{V^{20/3}}.
\end{align*}
The result follows combining the above  with~\eqref{eq:main12case1z}.
\section{Proof of Theorem~\ref{thm:zerodensity2}}
We apply Lemma~\ref{lem:zerodensity2} with 
\begin{align*}
Y=T^{3/(8\sigma)-o(1)},
\end{align*}
to get
\begin{align}
\label{eq:density1s1q}
N(\sigma,T)\ll T^{o(1)}\left(|\cA|+T^2Y^{6(1-2\sigma)}\right)\ll T^{o(1)}\left(|\cA|+T^{3(1-\sigma)/2\sigma}\right),
\end{align}
where the set $\cA$ is a well spaced set $\cA\subseteq [0,T]$ satisfying
\begin{align*}
N^{o(1)}\left|\sum_{N\le n \le 2N}a_n n^{it}\right|\ge N^{\sigma} \quad t\in \cA,
\end{align*}
for some sequence of complex numbers $a_n$ with $|a_n|\le 1$ and $N$ satisfying
\begin{align}
\label{eq:Ncondthm119}
Y^{4/3}\le N \le Y^{2+o(1)}.
\end{align}
For $N$ satisfying~\eqref{eq:Ncondthm119} and $\sigma$ satisfying~\eqref{eq:sigmacond3}  the conditions of Theorem~\ref{thm:main4} are satisfied with the choice
\begin{align*}
N^{o(1)}\delta=\min\left\{1,\frac{V^3}{TN}\right\}.
\end{align*}  
This gives 
\begin{align}
\label{eq:main4zerodensity}
|\cA|&\ll N^{2(1-\sigma)+o(1)}+TN^{3-5\sigma+o(1)}+TN^{10-14\sigma+o(1)}+T^{2/3}N^{32/3-14\sigma+o(1)} 
      \nonumber  \\ & \quad \quad \quad +\frac{N^{8-8\sigma+o(1)}}{T}+N^{10-12\sigma+o(1)}.
\end{align}
We also note by Lemma~\ref{lem:hux}
\begin{align}
\label{eq:jutilazerodensity}
|\cA|\ll N^{2(1-\sigma)+o(1)}+TN^{4-6\sigma+o(1)}.
\end{align}
Let 
$$\rho=\min\left\{\frac{3(1-\sigma)}{2\sigma(10-12\sigma)},\frac{3}{16\sigma}+\frac{1}{8(1-\sigma)}\right\},$$
and apply~\eqref{eq:main4zerodensity} if 
$$Y^{4/3}\le N \le T^{\rho},$$
while if   
$$T^{\rho}\le N \le Y^2,$$
then apply~\eqref{eq:jutilazerodensity}. Provided $\sigma$ satisfies~\eqref{eq:sigmacond3} this gives the bound 
\begin{align*}
|\cA|\ll T^{3(1-\sigma)/2\sigma+o(1)},
\end{align*}
and the result follows from~\eqref{eq:density1s1q}.
\section{Proof of Theorem~\ref{thm:zerodensity1}}
Let $Y$ be some parameter to be determined later satisfying
\begin{align}
\label{eq:Ycond1}
Y\ge T^{1/2},
\end{align}
and apply Lemma~\eqref{lem:zerodensity2} to get
\begin{align}
\label{eq:density1s1}
N(\sigma,T)\ll T^{o(1)}\left(|\cA|+T^2Y^{6(1-2\sigma)}\right)\ll T^{o(1)}\left(|\cA|+T^{5-6\sigma}\right)
\end{align}
where the set $\cA$ is a well spaced set $\cA\subseteq [0,T]$ satisfying
\begin{align*}
N^{o(1)}\left|\sum_{N\le n \le 2N}a_n n^{it}\right|\ge N^{\sigma} \quad t\in \cA,
\end{align*}
for some sequence of complex numbers $a_n$ satisfying $|a_n|\le 1$ and $N$ satisfying
\begin{align*}
Y^{4/3}\le N \le Y^{2+o(1)}.
\end{align*}
Note by~\eqref{eq:Ycond1}
$$Y^{4/3}\ge T^{2/3}.$$
Applying Theorem~\ref{thm:main1} with $k=7$ and 
$$N^{o(1)}\delta=\min\left\{\frac{N^{7/6}}{T},1\right\},$$
gives
\begin{align}
\label{eq:zzthm1}
|\cA| &\ll  N^{2(1-\sigma)+o(1)}+T^{1/3}N^{25/3-32\sigma/3+o(1)} \\ 
&+TN^{5/6-2\sigma+o(1)}+T^{2/3}N^{143/18-32\sigma/3+o(1)}, \nonumber
\end{align}
where we have used the fact that the assumption~\eqref{eq:density1cond} implies 
\begin{align*}
28\sigma-20\ge \frac{7}{6}.
\end{align*}
We also note by Lemma~\ref{lem:hux}
\begin{align}
\label{eq:zzttc}
|\cA|\ll N^{2(1-\sigma)+o(1)}+TN^{4-6\sigma+o(1)}.
\end{align}
Let 
\begin{align}
\label{eq:Zcond111}
Y^{4/3}\le Z\le Y^2,
\end{align}
 be some parameter and apply~\eqref{eq:zzthm1} if
 $Y^{4/3}\le N\le T$ while apply~\eqref{eq:zzttc}  if $Z\le N\le Y^2$. This gives 
\begin{align*}
& |\cA| \ll T^{1/3}Z^{25/3-32\sigma/3+o(1)}+TZ^{4-6\sigma+o(1)} \\ 
&+TY^{4(5/6-2\sigma)/3+o(1)}+T^{2/3}Y^{4(143/18-32\sigma/3)/3+o(1)}+Y^{4(1-\sigma)+o(1)}.
\end{align*}
Choosing 
$$Y=T^{9/(138\sigma-89)}, \quad Z=T^{2/(13-14\sigma)},$$
to balance appropriate terms gives
\begin{align*}
|\cA|& \ll T^{(21-26\sigma)/(13-14\sigma)+o(1)}+T^{36(1-\sigma)/(138\sigma-89)+o(1)}+T^{(114\sigma-79)/(138\sigma-89)+o(1)} \\
&\ll T^{36(1-\sigma)/(138\sigma-89)+o(1)}+T^{(114\sigma-79)/(138\sigma-89)+o(1)},
\end{align*}
by~\eqref{eq:density1cond}.
Combining with~\eqref{eq:density1s1} we get  
\begin{align*}
N(\sigma,T)\ll T^{36(1-\sigma)/(138\sigma-89)+o(1)}+T^{(114\sigma-79)/(138\sigma-89)+o(1)}.
\end{align*}
It remains to note the conditions~\eqref{eq:Ycond1} and~\eqref{eq:Zcond111} are satisfied from~\eqref{eq:density1cond}.

\end{document}